\journal{??}
\newcommand{\Proofname}{Proof}
\def\theequation{\arabic{section}.\arabic{equation}}
\newtheorem{theorem}{Theorem}[section]
\newtheorem{lemma}[theorem]{Lemma}
\newtheorem{proposition}[theorem]{Proposition}
\newtheorem{Definition}[theorem]{Definition}
\newtheorem{Rem}[theorem]{Remark}
\newenvironment{definition}{\begin{Definition}\rm}{\end{Definition}}
\newenvironment{remark}{\begin{Rem}\rm}{\end{Rem}}
\newcommand\be{\begin{equation}}
\newcommand\ee{\end{equation}}
\newcommand\bea{\begin{eqnarray}}
\newcommand\eea{\end{eqnarray}}
\newcommand\beaa{\begin{eqnarray*}}
\newcommand\eeaa{\end{eqnarray*}}
\newcommand\bay{\begin{array}}
\newcommand\eay{\end{array}}
\newcommand\ba{\begin{align}}
\newcommand\ea{\end{align}}
\newcommand\beba{\begin{equation}\left\{\begin{array}{rcl}}
\newcommand\eeba{\end{array}\right.\end{equation}}
\newcommand\bebaa{\begin{equation*}\left\{\begin{array}{rcl}}
\newcommand\eebaa{\end{array}\right.\end{equation*}}
\newcommand\beca{\begin{equation}\left\{\begin{array}{rcll}}
\newcommand\eeca{\end{array}\right.\end{equation}}
\newcommand\becaa{\begin{equation*}\left\{\begin{array}{rcll}}
\newcommand\eecaa{\end{array}\right.\end{equation*}}
\newcommand{\ph}{\varphi}
\newcommand{\R}{\mathbb{R}}
\newcommand{\Z}{\mathbb{Z}}
\newcommand{\N}{\mathbb{N}}
\newcommand{\C}{\mathbb{C}}
\newcommand{\ds}{\displaystyle}
\renewcommand{\theequation}{\arabic{section}.\arabic{equation}}
\begin{document}
\begin{frontmatter}
\title{Method of  fundamental solutions  for  Neumann problems of the modified Helmholtz equation in disk domains}

\author[Hokudai]{Shin-Ichiro Ei}
\author[Kyushu]{Hiroyuki Ochiai}
\author[Hakodate]{Yoshitaro Tanaka }

\address[Hokudai]{ 
Department of Mathematics, 
Faculty of Science, 
Hokkaido University,
Sapporo, 060-0810, Japan}

\address[Kyushu]{
Institute of Mathematics for Industry, Kyushu University, Fukuoka, 819-0395, Japan
}

\address[Hakodate]{
Department of Complex and Intelligent Systems,
School of Systems Information Science,
Future University Hakodate,
Hakodate, 041-8655, Japan
}

\begin{abstract}
The method of the fundamental solutions (MFS) is used to construct an approximate solution for a partial differential equation in a bounded domain.
It is demonstrated by combining the fundamental solutions shifted to the points outside the domain and determining the coefficients of the linear sum to satisfy the boundary condition on the finite points of the boundary.
In this paper, the existence of the approximate solution by the MFS for the Neumann problems of the modified Helmholtz equation in disk domains is rigorously demonstrated.
We reveal the sufficient condition of the existence of the approximate solution.
Applying Green's theorem to the Neumann problem of the modified Helmholtz equation, we bound the error between the approximate solution and exact solution into the difference of the function of the boundary condition and the normal derivative of the approximate solution by boundary integrations.
Using this estimate of the error, we show the convergence of the approximate solution by the MFS to the exact solution with exponential order, that is, $N^2a^N$ order, where $a$ is a positive constant less than one and $N$ is the number of collocation points.
Furthermore, it is demonstrated that the error tends to $0$ in exponential order in the numerical simulations with increasing number of collocation points $N$.

\end{abstract}

\begin{keyword}
Method of  fundamental solutions, Neumann problems of the modified Helmholtz equation, Numerical  analysis, Error analysis
\end{keyword}

\end{frontmatter}

\section{Introduction}\label{sec:intro}
The method of fundamental solutions (MFS), also known as the charge simulation method, is used to construct an approximate solution for a partial differential equation in a bounded domain.
It is based on combining the shifted fundamental solutions of the partial differential equation
and controlling the coefficients of the linear sum of the fundamental solutions to satisfy the boundary condition on the finite points of the boundary.
More precisely, the approximate solution by the MFS is constructed by the following procedure.
The points outside the domain, called the charge points, are first introduced and, thereafter, the fundamental solutions for the partial differential equation are shifted to each charge point.
This renders the singular point of each fundamental solution laid outside the domain.
If the differential operator of the equation is invariant to the shift, the approximate solution is given by the linear sum of the shifted fundamental solutions.
Next, a number of points on the boundary of the domain, called the collocation points, greater than or equal to the number of charge points are prepared.
The coefficients of the linear sum of the fundamental solutions are determined to satisfy the boundary condition on the finite collocation points.
As the number of collocation points increases,
the approximate solution possesses more points to satisfy the boundary condition.
As a result, we can construct the approximate solution, which is expected to converge to the exact solution for the partial differential equation if the number of collocation points increases.

The arrangement of both charge and collocation points generally depends on the shape of the domains.
The conditions of these arrangements have been studied mathematically.
There are several ways to determine the coefficients of the linear sum of the fundamental solutions, for example, the collocation method \cite{kats-okamo1988,katsurada1990, Li2004,Sakaki2017}, the collocation Trefftz method \cite{Z.Li2017}, and the hyperinterpolation method \cite{Li2008}.
From the analytical studies, it has also been reported that the approximate solutions by the MFS for some partial differential equations converge to the exact solutions with exponential order with respect to the number of the collocation points \cite{kats-okamo1988,katsurada1990,Sakaki2017}.
Most results for the convergence of the approximate solution in the MFS are for the Dirichlet boundary problems.
The convergence results of the approximate solution by the MFS for the Dirichlet boundary problem of the Laplace equation have been shown in the disk domain, Jordan regions with analytic boundary, and the doubly connected regions in \cite{kats-okamo1988,katsurada1990,Sakaki2017}, respectively.
The convergence results for the Dirichlet boundary problem of the Helmholtz equation in the disk domain and other analytic domains have been reported
in \cite{Barnett2008}.
The convergence results for the Dirichlet boundary problem of the modified Helmholtz equation in the disk domain and sphere domain have been reported in \cite{Li2004,Li2008}.
For the Neumann boundary problem of the Laplace equation, the convergence result of the approximate solution by the MFS has been reported in the simply connected domain except for disk domains \cite{Z.Li2017}.
According to these earlier studies of the MFS, the approximate solutions are often constructed in the disk domain, and thereafter the results are extended to the general shape of the domain.

As an application of the Neumann boundary problem of the modified Helmholtz equation,
its solution is used to investigate the interactions between the domain shape and the motion of the inside traveling spot and standing pulse solution of a reaction diffusion system \cite{Ei}.
The traveling spots and the standing pulse are the solutions that allow spatially localized patterns to propagate.
They are often observed in reaction diffusion systems.
The influence of the shape of the domain on the motion of the inside traveling spots and the standing pulse is analyzed by deriving the equation of motion \cite{Ei}.
However, this theory requires the explicit form of the solution of the modified Helmholtz equation to be obtained with the Neumann boundary condition for the various domain shapes.
Although the existence of the solution of this modified Helmholtz equation is simply shown, the explicit form of the solution has not been obtained except for domains with certain shapes.

In light of this background, we construct the approximate solution for the Neumann problem of the modified Helmholtz equation, particularly in the disk domain, by using the MFS with a collocation method as a quick step.
We employ the collocation method to construct the approximate solution in the MFS.
Estimating the ratio of the modified Bessel functions of the first and second kinds,
we reveal the sufficient condition that all eigenvalues of the matrix generated by the MFS are not equal to zero.
This gives us the existence of the unique approximate solution by the MFS for this problem.
Moreover, we provide an algorithm to calculate the error bound between this approximate solution in the MFS and the exact solution found by the energy method.
Using this error bound, we show \textit{a priori} that the approximate solution converges to the exact solution in exponential order, that is, $N^2 a^N$ with $0<a<1$ and the number of collocation points $N$.
Furthermore, through numerical simulation, we demonstrate \textit{a posteriori} that the error tends to $0$ in exponential order as the number of collocation points $N$ increases.

This paper is organized as follows:
In Section \ref{sec:2}, we state the mathematical setting of the Neumann boundary problem of the modified Helmholtz equation, the MFS with the collocation method, and the main results.
We prove the main results, Theorems \ref{thm:1} and \ref{thm:2}, in Section \ref{sec:proof1}, and Sections \ref{sec:proof2} and \ref{sec:conv}, respectively.
In Section \ref{sec:NV}, we describe the algorithm used to calculate the error bound for this approximate solution in the MFS.
In Section \ref{sec:simu}, we present the result of the numerical simulations, and we summarize our paper in Section \ref{sec:summary}.

\section{Mathematical settings and main results}\label{sec:2}
As explained in Section \ref{sec:intro}, we consider the following form of the Neumann boundary problem of the modified Helmholtz equation:
\begin{equation}
\label{eq:gene_prob}
\left \{
\begin{aligned}
&\Delta g -\alpha^2 g=0, \quad x \in \Omega, \\
&\dfrac{\partial g}{\partial n}= s, \quad x \in \partial\Omega,
\end{aligned}
\right.
\end{equation}
where $\Omega := B (0, R) \subseteq \R^2, \ B(0,R)$ is a disk with a radius $R>0$ at the origin, $\alpha$ is a positive constant, and $s=s(x)$ is a $2 \pi$-periodic and sufficiently smooth function.
Here we set the notation as $	S(\theta):= s(Re^{i \theta}) $ for $\theta \in [0, 2\pi)$.
A typical example of this equation applied to the pulse motion in the reaction diffusion systems as introduced in Section \ref{sec:intro} is as follows \cite{Ei}:
\begin{equation}
\label{eq:ei_prob}
\left \{
\begin{aligned}
&\Delta g -\alpha^2 g=0, \quad x \in \Omega, \\
&\dfrac{\partial g}{\partial n}= - \dfrac{\partial \Phi(|x-P|)}{\partial n}, \quad x \in \partial\Omega,
\end{aligned}
\right.
\end{equation}
where $P$ is the fixed point in $\Omega$, and $\Phi=\Phi(r), \ r = |x| = (x_1^2 + x_2^2 )^{1/2}$ is a sufficiently smooth function satisfying $\Phi(r) \to e^{-\alpha r}/ \sqrt{r}$ as $r \to \infty$.
The point $P$ corresponds to the position of a pulse as in Fig. \ref{fig:MFS} (a), and
the derivation of this equation can be found in \cite{Ei}.
\begin{figure*}[bt]
\begin{center}
\begin{tabular}{cc}
\includegraphics[width=6cm, bb=0 0 214 189]{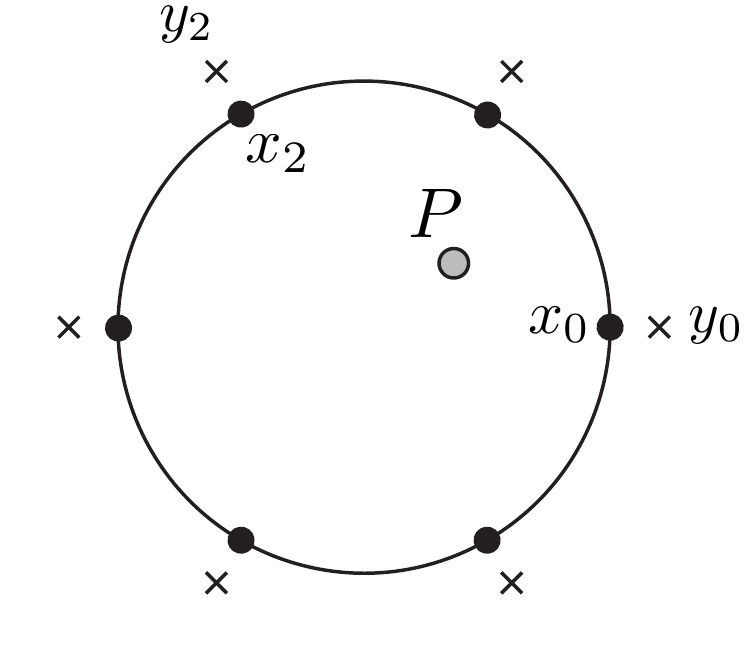}
&\includegraphics[width=6cm, bb=0 0 358 311]{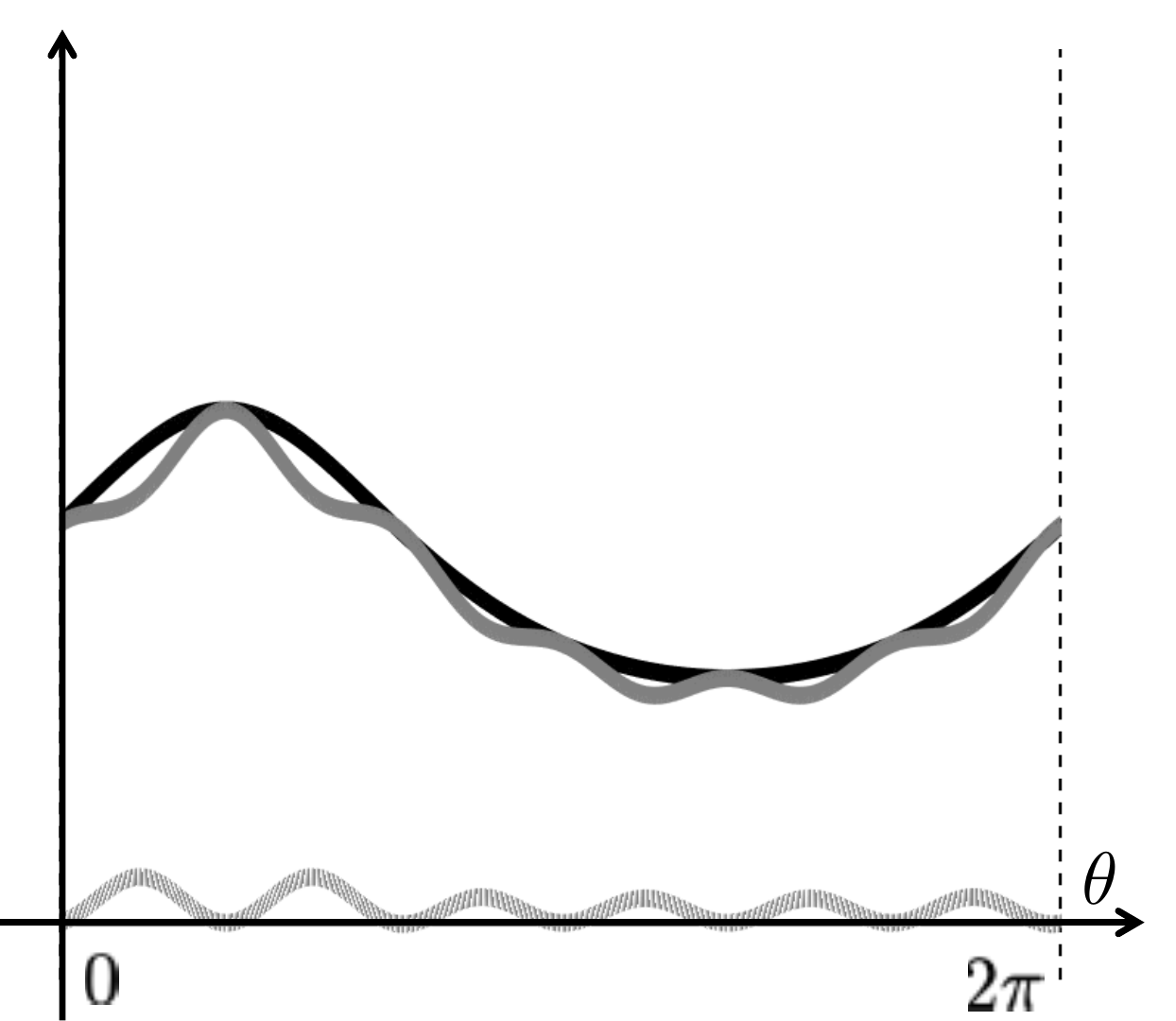}\\
(a) &(b)
\end{tabular}
\end{center}
\caption{(a) Schematic of the domain $\Omega$, position of the pulse $P$, the charge points $\{y_k\}_{k=1}^N$, and the collocation points $ \{x_j\}_{j=1}^N $ in the collocation method.
(b) The profiles of the normal derivative of the exact solution $\partial g/\partial n= s $, the approximate solution $\partial g_N/\partial n$, and the error $ \partial h_N/ \partial n$ on $\partial \Omega$ in \eqref{eq:ei_prob} with the parameters $\rho= 3$, $R=1$, $P=0.2e^{\pi i/3}$, $N=6$, and $\Phi(r) = e^{-\alpha r}/ \sqrt{r}$ with $\alpha=1.0$ and $r=|x|$, where $h_N$ is defined in Section \ref{sec:NV}.
The black, gray, and dashed curves correspond to the profiles of $\partial g/\partial n $, $\partial g_N/\partial n$, and $ \partial h_N/ \partial n$, respectively.
}
\label{fig:MFS}
\end{figure*}

The existence of the solution of \eqref{eq:gene_prob} is shown by the standard arguments.
In this argument $\Omega $ may be replaced by the general bounded domain with smooth boundary.
Supposing that the function $ s$ is the normal derivative of the trace of some function, say $\sigma$, and it satisfies $\partial \sigma / \partial n = s$, we change the variable as $u = g + \sigma$ in \eqref{eq:gene_prob}.
Then we have
\begin{equation}
\label{eq:ei_prob2}
\left \{
\begin{aligned}
&\Delta u -\alpha^2 u = \Delta \sigma -\alpha^2 \sigma =: \psi, \quad x \in \Omega, \\
&\dfrac{\partial u}{\partial n}= 0, \quad x \in \partial\Omega.
\end{aligned}
\right.
\end{equation}
We define the weak solution of \eqref{eq:ei_prob2} as follows.
\begin{definition}
We say that a function $u \in H^1(\Omega) $ is a weak solution of \eqref{eq:ei_prob2} if
\[
\int_{\Omega} \nabla u \cdot \nabla v dx + \alpha^2 \int_{\Omega} u v dx = - \int_{\Omega} \psi v dx
\]
for all $v \in H^1(\Omega)$.
\end{definition}
By using the Lax--Milgram theorem we show the existence of a unique weak solution $u \in H^1(\Omega)$ of \eqref{eq:ei_prob2}. 
The regularity theory refines that the weak solution $u \in H^1(\Omega)$ belongs to $C^{\infty}(\Omega)$ if $\psi \in C^{\infty}(\Omega)$.
Summarizing the previous results, we have the following proposition.
\begin{proposition}
Assume that $\sigma \in C^{\infty}(\Omega)$.
Then, there exists a unique solution $ g \in C^{\infty}(\Omega) $ of \eqref{eq:gene_prob}.
\end{proposition}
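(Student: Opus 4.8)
The plan is to follow the reduction already prepared in the text. Having passed from the inhomogeneous Neumann problem \eqref{eq:gene_prob} to the homogeneous-boundary problem \eqref{eq:ei_prob2} through the substitution $u = g + \sigma$, I would first secure a unique weak solution of \eqref{eq:ei_prob2} by the Lax--Milgram theorem and then upgrade its regularity, finally transferring the conclusion back to $g = u - \sigma$. Concretely, on the Hilbert space $H^1(\Omega)$ I introduce the bilinear form $B(u,v) = \int_{\Omega} \nabla u \cdot \nabla v\, dx + \alpha^2 \int_{\Omega} u v\, dx$ and the linear functional $F(v) = -\int_{\Omega} \psi v\, dx$, so that $u$ is a weak solution exactly when $B(u,v) = F(v)$ for all $v \in H^1(\Omega)$. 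Because $\sigma \in C^{\infty}(\Omega)$, we have $\psi = \Delta\sigma - \alpha^2\sigma \in L^2(\Omega)$, whence $F$ is bounded on $H^1(\Omega)$; boundedness of $B$ on $H^1(\Omega) \times H^1(\Omega)$ follows directly from the Cauchy--Schwarz inequality.

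The step I would single out as the crux is \emph{coercivity}, since it is precisely here that the defining feature of the \emph{modified} Helmholtz equation enters. One computes $B(u,u) = \|\nabla u\|_{L^2}^2 + \alpha^2 \|u\|_{L^2}^2 \geq \min(1, \alpha^2)\,\|u\|_{H^1}^2$, and the hypothesis $\alpha > 0$ makes the right-hand side a genuine multiple of $\|u\|_{H^1}^2$. This is exactly the point where the problem is better behaved than the corresponding Neumann problem for the Laplace equation: there the zeroth-order term is absent, constants lie in the kernel, coercivity fails on all of $H^1(\Omega)$, and one must work modulo constants subject to a compatibility condition on the boundary data. Here the term $\alpha^2 u$ removes that degeneracy entirely, and Lax--Milgram yields a unique $u \in H^1(\Omega)$ satisfying the weak formulation, with the Neumann condition encoded naturally in the test-function identity.

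It remains to promote $u$ from $H^1(\Omega)$ to $C^{\infty}(\Omega)$ and to read off uniqueness for the original problem. For the regularity I would invoke the standard interior elliptic regularity theory for the uniformly elliptic operator $-\Delta + \alpha^2$ with the smooth source $\psi$: bootstrapping the interior estimates shows $u \in C^{\infty}(\Omega)$, as asserted in the text. Setting $g := u - \sigma$ then gives $g \in C^{\infty}(\Omega)$, and substituting back reverses the change of variables so that $g$ solves \eqref{eq:gene_prob}. Uniqueness is immediate: if $g_1, g_2$ both solve \eqref{eq:gene_prob}, their difference $w = g_1 - g_2$ is a weak solution of \eqref{eq:ei_prob2} with $\psi = 0$, so testing with $v = w$ gives $\|\nabla w\|_{L^2}^2 + \alpha^2\|w\|_{L^2}^2 = 0$, forcing $w \equiv 0$ since $\alpha > 0$. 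The argument is essentially routine once coercivity is in hand; the only genuinely structural ingredient is the positivity of $\alpha$, and the one step I would cite rather than reprove is the elliptic regularity bootstrap.
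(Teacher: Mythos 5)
Your proposal follows exactly the route the paper takes: the same substitution $u=g+\sigma$ reducing to \eqref{eq:ei_prob2}, existence and uniqueness of the weak solution via the Lax--Milgram theorem (with coercivity supplied by $\alpha>0$), and the elliptic regularity bootstrap to reach $C^{\infty}(\Omega)$. Your write-up is in fact more detailed than the paper's own sketch, which states these steps without spelling out coercivity, boundedness, or the uniqueness computation.
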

In particular, in the case of $\Omega = B(0, R)$ we have the exact solution of \eqref{eq:gene_prob} as follows.
\begin{proposition}\label{prop:exact}
Let the function in $(r, \theta) \in [0, \infty)\times [0,2\pi) $ be
\begin{align}
g(re^{i\theta})	
	& = \sum_{n \in \Z} \frac{ I_n (\alpha r) } { \alpha I'_n(\alpha R) } a_n e^{i n \theta},\label{eq:exact_g}\\
a_n & := \frac{1}{2 \pi} \int_{-\pi}^{\pi} S(\theta)e^{-i n \theta} d\theta, \notag
\end{align}
where $I_n(x)$ is the modified Bessel function of the first kind of $n$ order, and $ a_n $ is the Fourier coefficients for $S(\theta)$, and it satisfies
\begin{equation*}
	S(\theta)= \sum_{n \in \Z} a_n e^{i n \theta}.
\end{equation*}
Then $g(r e^{i \theta})$ satisfies \eqref{eq:gene_prob}.
\end{proposition}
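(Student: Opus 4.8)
The plan is to verify directly that the separated-variables series \eqref{eq:exact_g} solves both the equation and the boundary condition, treating convergence as the only genuinely delicate point. First I would write the modified Helmholtz operator in polar coordinates,
\[
\Delta - \alpha^2 = \frac{1}{r}\frac{\partial}{\partial r}\left(r\frac{\partial}{\partial r}\right) + \frac{1}{r^2}\frac{\partial^2}{\partial\theta^2} - \alpha^2,
\]
and substitute a single Fourier mode $R_n(r)e^{in\theta}$. This reduces the PDE to the ordinary differential equation
\[
R_n'' + \frac{1}{r}R_n' - \left(\alpha^2 + \frac{n^2}{r^2}\right)R_n = 0,
\]
which, after the substitution $x = \alpha r$, is exactly the modified Bessel equation of order $n$. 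Its two linearly independent solutions are $I_n(\alpha r)$ and $K_n(\alpha r)$; since $K_n$ is singular at the origin while $I_n$ is regular there, the requirement that $g$ be smooth on all of $\Omega = B(0,R)$ (including the center) forces each radial factor to be a multiple of $I_n(\alpha r)$. Hence every term $I_n(\alpha r)e^{in\theta}$ is a classical solution of $\Delta g - \alpha^2 g = 0$.

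Next I would impose the Neumann condition. On $\partial\Omega$ the outward normal is radial, so $\partial g/\partial n = \partial g/\partial r|_{r=R}$. Differentiating \eqref{eq:exact_g} term by term gives
\[
\frac{\partial g}{\partial r}\bigg|_{r=R} = \sum_{n\in\Z} \frac{\alpha I_n'(\alpha R)}{\alpha I_n'(\alpha R)}\, a_n e^{in\theta} = \sum_{n\in\Z} a_n e^{in\theta} = S(\theta),
\]
which is precisely $s$ on the boundary; this is exactly why the coefficient $1/(\alpha I_n'(\alpha R))$ was chosen. For this step to make sense I would first record that $I_n'(\alpha R) \neq 0$ for every $n$: since $I_n(x) > 0$ and is strictly increasing for $x > 0$ when $n \geq 0$, and $I_{-n} = I_n$, the denominators never vanish and the coefficients in \eqref{eq:exact_g} are well defined.

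The main obstacle is justifying the term-by-term differentiation and the convergence of the series, both in the interior and up to the boundary. The key facts I would invoke are the standard asymptotics of the modified Bessel functions, which give, for fixed $r < R$, a ratio $I_n(\alpha r)/I_n(\alpha R)$ decaying geometrically like $(r/R)^{n}$ as $n \to \infty$, together with the rapid (faster than any polynomial) decay of the Fourier coefficients $a_n$ guaranteed by the smoothness of $S$. Combining these, the series and all its term-by-term $r$- and $\theta$-derivatives converge absolutely and uniformly on compact subsets of the open disk, legitimizing the computation that $g$ solves the PDE classically in $\Omega$. For the boundary condition I would rely on the uniform convergence of the Fourier series of the smooth function $S$, so that the normal-derivative series attains the boundary value $S(\theta)$. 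Assembling these pieces shows that $g$ in \eqref{eq:exact_g} is the desired solution.
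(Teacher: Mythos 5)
Your proof is correct and follows essentially the same route as the paper's (Appendix A): pass to polar coordinates, observe that each Fourier mode $I_n(\alpha r)e^{in\theta}$ solves the modified Bessel equation and hence the PDE, and recover the Neumann data $S(\theta)$ by term-by-term differentiation at $r=R$. You are in fact somewhat more thorough than the paper, which performs the term-by-term differentiation without the convergence justification (Bessel ratio decay like $(r/R)^n$ plus rapid decay of $a_n$) and the non-vanishing of $I_n'(\alpha R)$ that you supply.
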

The proof is presented in Appendix \ref{append:exact}.
This exact solution is derived by the typical variable separation of the Fourier method.

Next, we construct the approximate solution for \eqref{eq:gene_prob} by the method of fundamental solution.
Owing to the operator $\Delta - \alpha^2$, the fundamental solution of \eqref{eq:gene_prob} is written in terms of the modified Bessel function of the second kind, $K_0( \alpha |x|)$.
We take the $N$ points $\{ y_k\}_{k=1} ^{N}$ outside of $\Omega$, called the charge points.
As the operator $\Delta - \alpha^2$ is invariant to the shift, the fundamental solutions $K_0(\alpha |x - y_k|), \ (k=1, \ldots, N)$, of which singular points are shifted to the outside of $\Omega$, satisfy the principal equation \eqref{eq:gene_prob}.
Therefore, we show that
\begin{equation}\label{sol:mfs}
g_N(x):= \sum_{k=1}^N Q_k K_0 ( \alpha | x - y_k |)	
\end{equation}
with constants $\{ Q_k\}_{k=1}^N $ satisfies the principal equation \eqref{eq:gene_prob} in $\Omega$.
From here, we identify the two-dimensional Euclid space $\R^2$ with the complex plane $\C$.
We introduce the $N$ points $\{ x_j\}_{j=1}^N$ on the boundary $\partial \Omega$, called the collocation points, so that the approximate solution $g_N(x)$ satisfies the boundary condition on the $N$ points of the boundary.
Substituting $g_N(x)$ for the boundary condition of \eqref{eq:gene_prob} and employing $\{ x_j\}_{j=1}^N$ on the $\partial \Omega$, we have
\begin{equation}\label{eq:renri}
	\frac{\partial}{\partial n}g_N(x_j )=\sum_{k=1}^{N}Q_k \frac{\partial}{\partial n}K_0( \alpha | x_j - y_k|) = s(x_j),
	\quad (j=1, \ldots, N).
\end{equation}
We write \eqref{eq:renri} as a system of linear equations
for unknown functions $\{Q_k\}_{k=1}^{N}$ in a matrix form.
By using $K'_0(x)= - K_1(x)$, we set
\begin{equation*}
\begin{split}
c_{k,j}&:=- \alpha K_1(\alpha |x_j-y_k|)\frac{|x_j| - \operatorname{Re} \Big( ( x_j / |x_j| ) \cdot {\bar y_k} \Big)}{|x_j-y_k|},\\
s_{j}&:= s(x_j),
\end{split}
\end{equation*}
for $k,j=1, \ldots, N$, where $\operatorname{Re}$ means the real part of complex number, and ${\bar y_k}$ means the complex conjugate of $y_k$.
The form of $s_j$ in \eqref{eq:ei_prob} is given by
\[
s_{j}:=-\Phi'( |x_j - P|)\frac{|x_j| - \operatorname{Re} \Big( ( x_j  / |x_j| ) \cdot {\bar P} \Big)}{|x_j-P|}, \quad (j=1, \ldots, N).
\]
This vector is used in the numerical simulation as in Figs. \ref{fig:MFS} and \ref{fig:error}.
We define $G=\{ c_{k,j}\}_{1\le k,j \le N}$, $Q=(Q_1,\cdots, Q_N)^{T}$ and $S=(s_1,\cdots,s_N)^{T}$.
Then \eqref{eq:renri} is equivalent to the following system of linear equation:
\begin{equation}\label{eq:gs}
GQ = S.	
\end{equation}
For a solution $\{Q_k\}_{k=1}^{N}$ of this system \eqref{eq:gs},
we obtain an approximate solution $g_N(x)$ given by \eqref{sol:mfs}.

Next, we explain the collocation method.
From here, $\Omega$ is denoted as a disk again, that is, $\Omega = B(0,R)$ with a positive constant $R>0$.
In the typical collocation method in a disk domain \cite{kats-okamo1988}, the charge points and the collocation points are usually set as $\{y_k\}_{k=1}^N =\{ \rho \omega^k \}_{k=1}^N$, and as $\{x_j\}_{j=1}^N=\{ R \omega^j \}_{j=1}^N$ on $\partial \Omega$ in $\C$, respectively,
where $0<R<\rho$, and $ \omega = e^{2\pi i/N}$.
These points divide the circumference of the concentric circles into $N$ equal parts.
The schematics are presented in Fig. \ref{fig:MFS} (a).
Then, calculating that
\[
|x_j - y_k | = |R - \rho \omega^{k-j}|,
\]
we have
\begin{equation*}
	c_{k,j}
= - \alpha K_1(\alpha |R-\rho\omega^{k-j}|)\frac{R - \operatorname{Re}( \rho \omega^{-k+j})}{|R-\rho \omega^{k-j}|}.
\end{equation*}
Therefore, using the notation $l:=-(k-j)$, we see that $c_{k,j}=c_l$, and that
\[
c_l : = - \alpha K_1(\alpha |R-\rho e^{-i\theta_l }|)\frac{R - \rho \cos\theta_l }{|R-\rho e^{-i\theta_l }|}
, \quad \theta_l := \frac{2\pi l}{N}.
\]
Then the matrix $G$ becomes a cyclic matrix described by
\begin{equation*}
		G=
	\begin{pmatrix}
	c_0 & c_1 & \cdots & \cdots &c_{N-1} \\
	c_{N-1} & c_0 & \ddots & &\vdots \\
	\vdots & &\ddots & \ddots& \vdots \\
	\vdots & & & \ddots & c_1 \\
	c_1 &\cdots & \cdots & \cdots & c_0
	\end{pmatrix}.
\end{equation*}
The eigenvalues of this matrix are calculated by $\sum_{ l = 0 }^{N-1}c_l \omega^{ l m }, \ m= 0, \ldots, N-1$, from the discrete Fourier transform.

By calculating the inverse matrix of $G$ we explicitly obtain the coefficients $\{ Q_{k} \}_{k=1}^N$.
We denote the inverse matrix of $G$ by $G^{-1}$.
Using the Lagrangian interpolation polynomial as in Appendix \ref{append:Lag}, we obtain $G^{-1}$.
Setting
\begin{equation*}
	f(x) := \sum _{l=0}^{N-1} c_l x^l,
\end{equation*}
we write the eigenvalues as
\begin{equation}\label{f_omega1}
	f(\omega^m) = \sum _{l=0}^{N-1} c_l \omega^{ml}, \quad (m=0,\ldots, N-1).
\end{equation}
Furthermore, as $G^{-1}$ is also a cyclic matrix, we define the component of $G^{-1}$ as $b_l, \ (l=0, \ldots, N-1)$, by labeling the index similarly to that of $G$.
Then, we have
\begin{equation*}
	b_l\text{=} \frac{1}{N} \sum _{m=0}^{N-1} \frac{1}{f\left(\omega ^m \right) \omega ^{m l}}, \quad (l=0, \ldots, N-1)
\end{equation*}
 as computed in Appendix \ref{append:Lag}.
If $f(\omega^m) \ne 0$ for $m=0, \ldots, N-1$, which implies the matrix $G$ has no zero eigenvalue,
we calculate the coefficients $Q_k$ by
\begin{equation}\label{eq:qk}
	Q_k = \sum _{l=1}^N s_l b_{(-k+l)}.
\end{equation}
By using this $\{ Q_k \}_{k=1}^N$, the approximate solution $g_N(x)$ is given by \eqref{sol:mfs}.
Under these settings the main results are as follows.
\begin{theorem}
\label{thm:1}
Suppose that $\rho > \Big( \sqrt{ 4\alpha^2 R^2 +6 - 2 \sqrt{ 4\alpha^2 R^2+9 } } \Big)/\alpha $ for any $R>0$ and $\alpha>0$.
Then, we have $f(\omega^m) > 0$ for any $N \in \N$, and $m=0, \ldots, N-1$.
Thus, the approximate solution $g_N(x)$ is determined uniquely.
\end{theorem}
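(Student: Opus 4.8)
The plan is to prove positivity of the circulant eigenvalues $f(\omega^m)$ by recognizing the coefficients $c_l$ as equally spaced samples of a smooth $2\pi$-periodic profile whose Fourier coefficients are manifestly positive, and then collapsing the discrete Fourier transform by an aliasing identity. From the closed form of $c_l$ one sees that $c_l=c(\theta_l)$ with $\theta_l=2\pi l/N$, where
\[
c(\theta):=-\alpha K_1\big(\alpha d(\theta)\big)\frac{R-\rho\cos\theta}{d(\theta)},\qquad d(\theta):=\sqrt{R^2-2R\rho\cos\theta+\rho^2}.
\]
The key observation is that $c(\theta)=\partial_R\big[K_0(\alpha d(\theta))\big]$ is exactly the outward normal derivative, at the boundary point $Re^{i\theta}$, of the fundamental solution $K_0(\alpha|x-y|)$ centered at a charge point $y=\rho$; the angular separation between collocation and charge points is what enters $c_l$, so $G$ being circulant is a reflection of this rotational structure.

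First I would compute the Fourier series of $c(\theta)$. Applying the Gegenbauer/Graf addition theorem for the modified Bessel function, valid since $R<\rho$,
\[
K_0\big(\alpha d(\theta)\big)=\sum_{n\in\Z} I_n(\alpha R)\,K_n(\alpha\rho)\,e^{in\theta},
\]
and differentiating term by term in $R$ (legitimate because the series and its $R$-derivative converge geometrically, the $n$th term being $O((R/\rho)^{|n|})$ for $R<\rho$), one reads off the Fourier coefficients $\hat c_n=\alpha\, I_n'(\alpha R)\,K_n(\alpha\rho)$.

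Next I would insert this expansion into $f(\omega^m)=\sum_{l=0}^{N-1}c(\theta_l)\,\omega^{ml}$ and interchange the finite $l$-sum with the absolutely convergent $n$-sum. Discrete orthogonality of the roots of unity, $\sum_{l=0}^{N-1}e^{2\pi i(n+m)l/N}=N$ when $n\equiv -m\ (\mathrm{mod}\ N)$ and $0$ otherwise, collapses this to the aliased series
\[
f(\omega^m)=N\alpha\sum_{j\in\Z} I_{jN-m}'(\alpha R)\,K_{jN-m}(\alpha\rho).
\]
Since $K_n>0$ for every order and positive argument, and $I_n'(x)=\tfrac12\big(I_{n-1}(x)+I_{n+1}(x)\big)>0$ (using $I_{-n}=I_n$), every summand is strictly positive, whence $f(\omega^m)>0$ and $G$ is invertible, so $g_N$ is uniquely determined. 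This route actually produces positivity for every $\rho>R$, so the stated threshold on $\rho$ is more than enough to conclude.

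The main obstacle is therefore not positivity itself but, I expect, the quantitative lower bound on $f(\omega^m)$ that the later convergence estimate needs: one must isolate the dominant term (the $j$ minimizing $|jN-m|$) and dominate the tail by controlling ratios such as $I_{p+N}'(\alpha R)K_{p+N}(\alpha\rho)\big/\big(I_p'(\alpha R)K_p(\alpha\rho)\big)$. It is precisely these ratio estimates for the modified Bessel functions of the first and second kinds that force a genuine lower bound on $\rho$, and I anticipate the explicit threshold $\rho>\big(\sqrt{4\alpha^2R^2+6-2\sqrt{4\alpha^2R^2+9}}\big)/\alpha$ to emerge from making such a ratio bound strictly below one explicit, which is the delicate part of the argument.
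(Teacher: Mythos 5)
Your proposal is correct, and it reaches the conclusion by a genuinely different route that is both shorter and stronger than the paper's. The paper (Proposition \ref{thm:3}) never expands $K_0$ itself: it applies the Gegenbauer addition theorem to $\alpha K_1(\alpha d(\theta))/d(\theta)$ with $\nu=1$, multiplies by $(\rho\cos\theta-R)$, and reorganizes the double sum to obtain the Fourier coefficients $\tilde A_n/R$ with $\tilde A_n=\sum_{r\ge 0}\bigl(A_{|n-1|+2r}+A_{|n+1|+2r}-\tfrac{2R}{\rho}A_{|n|+2r}\bigr)$, where $A_m=(1+m)K_{m+1}(\alpha\rho)I_{m+1}(\alpha R)$. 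Each summand there carries a negative part, so positivity is not manifest; the paper must invoke ratio bounds for modified Bessel functions (Lemmas \ref{lemm:An-1} and \ref{lemm:An+1}, based on results of Laforgia--Natalini and Segura) to make each bracket positive, and it is precisely the bound $\sup_{n}\frac{R}{\rho}\frac{A_n}{A_{n-1}}<1$ that forces the threshold on $\rho$ appearing in the statement. You instead observe that $c(\theta)=\partial_R K_0(\alpha d(\theta))$, apply Graf's addition theorem to $K_0$, and differentiate term by term, which yields the closed form $\hat c_n=\alpha I_n'(\alpha R)K_n(\alpha\rho)$; this is strictly positive for every $\rho>R$ (which the theorem's hypothesis and the collocation setup guarantee), since $2I_n'=I_{n-1}+I_{n+1}>0$ and $K_n>0$, and your aliasing step is then identical to \eqref{eq:f_omega}. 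By uniqueness of Fourier coefficients your computation even shows $\tilde A_n=R\,\alpha I_n'(\alpha R)K_n(\alpha\rho)$, i.e.\ the paper's coefficients are positive unconditionally, which confirms the paper's own remark in the Discussion that its sufficient condition ``may be relaxed.'' What the paper's heavier route buys is reusable machinery: the two-sided estimates on $A_n$ and $\tilde A_n$ developed for positivity are exactly what the convergence proof of Theorem \ref{thm:2} consumes in Section \ref{sec:proof2}, whereas with your closed form those quantitative bounds would have to be rederived (straightforwardly, since $I_n'(\alpha R)K_n(\alpha\rho)$ obeys the same geometric bounds in $n$). One small correction to your final paragraph: in the paper the threshold on $\rho$ is not produced by the later quantitative lower bounds---it enters already at the positivity step, as an artifact of the difference structure of $\tilde A_n$.
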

The proof of Theorem \ref{thm:1} is given in Section \ref{sec:proof1}.
Moreover, using Green's theorem as explained in Section \ref{sec:NV}, we can estimate the error bound between exact solution and the approximate solution in $H^2(\Omega)$ by boundary integrations.
Applying the Sobolev embedding theorem, we obtain the following convergence result.
\begin{theorem}\label{thm:2}
In addition to the hypothesis of Theorem \ref{thm:1}, we assume that $g$ can be extended to the neighborhood of ${\bar \Omega}$ and, hence, that $g$ is bounded in $0 \le r \le r_0$, where some $r_0>R$.
Then, there exist constants $C_1>0$ and $a$ with $0<a<1$ that are independent of $N$ and $g$ such that
\begin{equation*}
	\sup_{ x \in \Omega }| g- g_N | \le C_1 N^2  a^N \sup_{|x|\le r_0}| g (x)|.
\end{equation*}
\end{theorem}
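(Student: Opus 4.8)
The plan is to study the error $h_N := g - g_N$, which by construction satisfies the homogeneous equation $\Delta h_N - \alpha^2 h_N = 0$ in $\Omega$, while its Neumann data on $\partial\Omega$ is exactly the boundary residual $e := s - \partial g_N/\partial n$. Since $g_N$ meets the boundary condition at every collocation point $x_j = R\omega^j$, the function $e$, regarded as a $2\pi$-periodic function of $\theta$, vanishes at the $N$ equally spaced nodes $\theta_j = 2\pi j/N$. Thus the whole theorem reduces to showing that this interpolation-type residual is exponentially small and then transferring that smallness from the boundary into the interior.

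First I would run the energy identity: multiplying the equation by $h_N$ and applying Green's theorem gives
\[
\int_\Omega|\nabla h_N|^2\,dx + \alpha^2\int_\Omega h_N^2\,dx = \int_{\partial\Omega} e\,h_N\,ds,
\]
so with the trace inequality $\|h_N\|_{L^2(\partial\Omega)}\le C\|h_N\|_{H^1(\Omega)}$ one obtains $\|h_N\|_{H^1(\Omega)}\le C\|e\|_{L^2(\partial\Omega)}$. Because $\Delta h_N = \alpha^2 h_N\in L^2$, the standard elliptic estimate for the Neumann problem upgrades this to $\|h_N\|_{H^2(\Omega)}\le C\|e\|_{H^{1/2}(\partial\Omega)}$, and since $\Omega\subset\R^2$ the Sobolev embedding $H^2(\Omega)\hookrightarrow C(\overline\Omega)$ yields
\[
\sup_{x\in\Omega}|g - g_N| = \sup_{x\in\Omega}|h_N|\le C\|h_N\|_{H^2(\Omega)}\le C\|e\|_{H^{1/2}(\partial\Omega)}.
\]
At this point the problem is purely one of bounding the boundary residual $e$.

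The core of the argument, and the step I expect to be hardest, is the exponential bound on $\|e\|$. Here I would exploit the disk geometry through Graf's addition theorem: for $|x|<\rho$ one has $K_0(\alpha|x - \rho\omega^k|) = \sum_{n\in\Z}K_n(\alpha\rho)I_n(\alpha|x|)e^{in(\theta - 2\pi k/N)}$, so that $g_N(re^{i\theta}) = \sum_{n\in\Z}K_n(\alpha\rho)I_n(\alpha r)\widehat Q_n e^{in\theta}$ with $\widehat Q_n := \sum_k Q_k e^{-2\pi ink/N}$ depending only on $n\bmod N$. Comparing with the exact series of Proposition \ref{prop:exact}, the Fourier coefficients of $e$ are $d_n = a_n - \alpha K_n(\alpha\rho)I_n'(\alpha R)\widehat Q_n$, and the $N$ collocation equations say precisely that $\sum_{m\in\Z}d_{n+mN} = 0$ in each residue class. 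Solving this single linear relation per class for $\widehat Q_n$ expresses each $d_n$ as an aliasing combination of the genuine high-frequency coefficients $a_{n+mN}$ with $m\ne0$; the relevant denominators are the sums $\sum_m K_{n+mN}(\alpha\rho)I'_{n+mN}(\alpha R)$, which are proportional to the eigenvalues $f(\omega^m)$ and hence strictly positive by Theorem \ref{thm:1}.

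Finally I would insert the analyticity hypothesis. Boundedness of $g$ on $|x|=r_0$ gives, via Proposition \ref{prop:exact}, the decay $|a_n/(\alpha I_n'(\alpha R))|\le \sup_{|x|\le r_0}|g|/I_n(\alpha r_0)$; combined with the large-order asymptotics $I_n(\alpha R)/I_n(\alpha r_0)\sim(R/r_0)^{|n|}$ and $K_n(\alpha\rho)I_n'(\alpha R)\sim C(R/\rho)^{|n|}$, this forces every aliased contribution to carry a factor $(R/r_0)^{|n+mN|}$ or $(R/\rho)^{N}$. Summing the resulting geometric bounds over the $O(N)$ residue classes, and tracking the $O(N)$ index factors produced by the Bessel-function ratios, yields $\|e\|_{H^{1/2}(\partial\Omega)}\le C N^2 a^N\sup_{|x|\le r_0}|g|$ with a single constant $0<a<1$ built from the ratios $R/\rho$ and $R/r_0$; together with the chain of inequalities above this is exactly the claimed estimate. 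The delicate points are the uniformity of the geometric rate across all classes, in particular the ``middle'' modes $|n|\approx N/2$ where no single term dominates the denominator, and the bookkeeping of the polynomial prefactor, which I would control by the crude but sufficient bound $N^2$ rather than chasing the optimal power of $N$.
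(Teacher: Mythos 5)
Your proposal is correct and follows essentially the same route as the paper: reduce the problem to the Neumann residual of the error function $h_N$ via the energy method and the trace inequality, expand everything in Fourier series on the disk (you invoke Graf's addition theorem for $K_0$ where the paper uses Gegenbauer's addition theorem for $K_1$ from Watson — an equivalent step), exploit the aliasing structure of the collocation equations together with the positivity of the denominators $\sum_l \tilde{A}_{lN+n}$ guaranteed by Theorem \ref{thm:1}, combine the decay of $a_n$ coming from the extension hypothesis (your bound is exactly the paper's Lemma \ref{lemma:fn}) with the geometric decay of the kernel coefficients, and finish with the Sobolev embedding, flagging the same delicate middle modes $|n|\approx N/2$ that the paper handles in Lemma \ref{lemm:convergence}. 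The only notable deviation is the passage from $H^1$ to the sup-norm: you cite standard $H^2$ elliptic regularity with $H^{1/2}(\partial\Omega)$ Neumann data, whereas the paper proves an elementary substitute (Lemmas \ref{lemm:H^1}--\ref{lemm:H^2_boun}) by applying the energy estimate also to $h_{N,x}$ and $h_{N,y}$, which satisfy the same constant-coefficient equation; this keeps every constant explicit, which the paper needs for the computable error bound of Section \ref{sec:NV}, while your variant trades that explicitness for a shorter argument and slightly weaker boundary norms.
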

Hereafter $C_j, \ (j\in \N)$, denotes positive constants.
The proof of Theorem \ref{thm:2} is given in Section \ref{sec:conv} with the preparation in Sections \ref{sec:NV} and \ref{sec:proof2}.
\begin{remark}
In practice, assuming $\rho \ge 2 R $ is a simpler and more sufficient condition for Theorem \ref{thm:1}.
As $ \sqrt{ 4\alpha^2 R^2 +6 - 2 \sqrt{ 4\alpha^2 R^2+9 } }< 2 \alpha R $ for any $\alpha, R>0$, the condition of Theorem \ref{thm:1} becomes a little stronger by $\rho \ge 2 R$.
\end{remark}
\begin{remark}
From the variable separation of the Fourier method, we can construct the exact solution ${\tilde g}(r,\theta)$ for the same problem in $B(0,r_0)$ as \eqref{eq:gene_prob}, and obtain ${\tilde g}(R,\theta)$.
This implies that the solution $g$ of \eqref{eq:gene_prob} can be extended to $B(0,r_0)$ if $S( \theta ) = \partial {\tilde g} /\partial n (R,\theta)$.
Thus, the assumption of Theorem \ref{thm:2} is not empty at least.
\end{remark}

\section{Existence of the approximate solution}\label{sec:proof1}
In this section we give the proof of Theorem \ref{thm:1}.
We set
\begin{align*}
A_n
&:=( 1+n )K_{ n+1}(\alpha \rho)I_{ n+1}(\alpha R), \quad (n \in \N),
\end{align*}
where  $K_{n}(x)$ is the modified Bessel function of the second kind of $n$ order.
Furthermore, we introduce the following functions:
\begin{equation*}
\begin{split}
	d(\theta)&:=\left| R - \rho e^{- i \theta} \right|=\sqrt{R^2+\rho^2-2 R \rho \cos\theta},\\
	c(\theta)&:= - \alpha K_1(\alpha |R-\rho e^{-i\theta}|)\frac{R - \rho \cos\theta}{|R-\rho e^{-i\theta}|}
	= - \alpha K_1(\alpha d (\theta))\frac{R - \rho \cos\theta}{d (\theta) }
\end{split}
\end{equation*}
for $\theta \in [0, 2\pi)$.
To prove Theorem \ref{thm:1} we give the Fourier series expansion of $c(\theta)$, and  another description of the eigenvalues $f(\omega^m)$ for $m=0,\ldots, N-1$ by the Fourier coefficient of $c(\theta)$ in the following proposition.

\begin{proposition}\label{thm:3}
The Fourier series expansion of $c(\theta)$ is given by 
\begin{align}
c(\theta)  
&= \frac1R \sum_{n \in \Z} \tilde{A}_n e^{i n \theta}, \label{eq:c}
\end{align}
where
\begin{align*}
\tilde{A}_n 
&:= \sum_{r=0}^\infty (A_{|n-1|+2r} + A_{|n+1|+2r} -\frac{2R}{\rho} A_{|n|+2r}), \quad (n \in \Z ).
\end{align*}
Moreover, the eigenvalues of the cyclic matrix $G$ are expressed by
\begin{align}
 f(\omega^m)
&= \frac{N}{R}  \sum_{ n \in \Z} \tilde{A}_n \delta _ { n + m \in N\Z} 
= \frac{N}{R}  \sum_{ n \in \Z} \tilde{A}_{ nN + m }, \quad (m=0,\ldots, N-1),
\label{eq:f_omega}
\end{align}
 where $\delta_n$ is the Kronecker Delta. 

\end{proposition}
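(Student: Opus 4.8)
The plan is to compute the continuous Fourier coefficients of $c(\theta)$ in closed form and then pass from these to the eigenvalues $f(\omega^m)$ by the standard aliasing identity of the discrete Fourier transform. The starting point is simply the definition, rewritten as
\[
c(\theta)=-\alpha\,(R-\rho\cos\theta)\,\frac{K_1(\alpha d(\theta))}{d(\theta)},\qquad d(\theta)=\sqrt{R^2+\rho^2-2R\rho\cos\theta};
\]
using $\partial d/\partial R=(R-\rho\cos\theta)/d(\theta)$ and $K_0'=-K_1$ one even recognises $c(\theta)=\partial_R K_0(\alpha d(\theta))$, which gives a useful independent cross-check later. Thus the whole statement reduces to the Fourier expansion of $K_1(\alpha d)/d$, multiplied by the first-degree trigonometric factor $-\alpha(R-\rho\cos\theta)$.

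The key tool I would use is Gegenbauer's addition theorem for the modified Bessel function with $\nu=1$: for $R<\rho$,
\[
\frac{K_1(\alpha d(\theta))}{d(\theta)}=\frac{2}{\alpha\rho R}\sum_{k=0}^{\infty}(1+k)\,K_{k+1}(\alpha\rho)\,I_{k+1}(\alpha R)\,C^{(1)}_k(\cos\theta)=\frac{2}{\alpha\rho R}\sum_{k=0}^{\infty}A_k\,C^{(1)}_k(\cos\theta),
\]
where $C^{(1)}_k$ is the Gegenbauer (Chebyshev) polynomial; the coefficients that appear are exactly the quantities $A_k$ of the paper, which is a strong consistency check on the normalisation. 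Since $C^{(1)}_k(\cos\theta)=\sum_{p=0}^{k}e^{i(k-2p)\theta}$, the coefficient of $e^{im\theta}$ picks out precisely those $k\ge|m|$ with $k\equiv m\pmod 2$, i.e. $k=|m|+2r$, so the $m$-th Fourier coefficient of $K_1(\alpha d)/d$ is $\frac{2}{\alpha\rho R}\sum_{r=0}^{\infty}A_{|m|+2r}$. Finally I would multiply by $-\alpha(R-\rho\cos\theta)=-\alpha R+\alpha\rho\cos\theta$; multiplication by $\cos\theta$ sends $e^{im\theta}$ to $\tfrac12\bigl(e^{i(m+1)\theta}+e^{i(m-1)\theta}\bigr)$ and hence shifts the index by $\pm1$, so that collecting the coefficient of $e^{im\theta}$ gives
\[
\widehat{c}_m=-\frac{2}{\rho}\sum_{r=0}^{\infty}A_{|m|+2r}+\frac1R\sum_{r=0}^{\infty}\bigl(A_{|m-1|+2r}+A_{|m+1|+2r}\bigr)=\frac1R\tilde{A}_m,
\]
which is the expansion \eqref{eq:c}: the $\pm1$ shifts produce the $|m\pm1|$ indices and the $-\alpha R$ term produces the $-2R/\rho$ coefficient.

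For the eigenvalue formula I would use $f(\omega^m)=\sum_{l=0}^{N-1}c(\theta_l)\omega^{ml}$ with $\theta_l=2\pi l/N$ and $\omega^{ml}=e^{im\theta_l}$. Substituting the Fourier series $c(\theta_l)=\frac1R\sum_{n\in\Z}\tilde{A}_n e^{in\theta_l}$ (the $l$-sum being finite, the interchange with the $n$-sum is immediate) and invoking the orthogonality relation $\sum_{l=0}^{N-1}e^{i(n+m)2\pi l/N}=N\,\delta_{n+m\in N\Z}$ yields $f(\omega^m)=\frac{N}{R}\sum_{n\in\Z}\tilde{A}_n\,\delta_{n+m\in N\Z}$. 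The last equality in \eqref{eq:f_omega} then follows from the evenness $\tilde{A}_{-n}=\tilde{A}_n$, which is clear from the definition since $|{-n}\pm1|=|n\mp1|$ and $|{-n}|=|n|$: writing the constraint $n\equiv-m\pmod N$ as $n=jN-m$ and applying $j\mapsto-j$ together with evenness converts $\sum_j\tilde{A}_{jN-m}$ into $\sum_j\tilde{A}_{jN+m}$.

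The main obstacle is the analytic bookkeeping around these two series manipulations rather than any single hard identity: I must pin down the exact constant in the modified-Bessel Gegenbauer theorem for $\nu=1$ and, above all, justify absolute and uniform convergence so that reading off Fourier coefficients term by term and interchanging summations are legitimate. This is exactly where the hypothesis $R<\rho$ enters: since $d(\theta)\ge\rho-R>0$, the function $c(\theta)$ is real-analytic, and from the large-order asymptotics $I_{k+1}(\alpha R)K_{k+1}(\alpha\rho)=O\!\left(k^{-1}(R/\rho)^{k}\right)$ the coefficients $A_k$ decay geometrically, so every series in play—including the double sum defining $\tilde{A}_n$—converges absolutely and the termwise extraction of coefficients is valid.
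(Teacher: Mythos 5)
Your proposal is correct and follows essentially the same route as the paper's proof: the Gegenbauer addition theorem for $K_\nu$ specialized to $\nu=1$ (producing exactly the coefficients $A_k$), the expansion $C^1_k(\cos\theta)=\sum_{p=0}^k e^{i(k-2p)\theta}$ to reorganize into a Fourier series, the index shifts from the factor $-\alpha(R-\rho\cos\theta)$, and finally the orthogonality relation $\sum_{l=0}^{N-1}e^{i(n+m)2\pi l/N}=N\,\delta_{n+m\in N\Z}$ together with the evenness $\tilde A_{-n}=\tilde A_n$ for the eigenvalue identity. Your explicit justification of absolute convergence via the geometric decay of $A_k$, and the cross-check $c(\theta)=\partial_R K_0(\alpha d(\theta))$, are welcome additions that the paper leaves implicit, but they do not change the substance of the argument.
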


\begin{proof}[Proof of Proposition\,\ref{thm:3}]
We utilize the following formulas of the modified Bessel function of the second kind, and the Gegenbauer polynomial \cite[p365]{Watson}:
\begin{equation*}
\begin{split}
	\frac{K_{\nu}(d(\theta))}{d(\theta)^\nu}
	&=2^\nu\Gamma(\nu)\sum_{m=0}^\infty (\nu+m)\frac{K_{m+\nu}(\rho)}{\rho^\nu} \frac{I_{m+\nu}(R)}{R^\nu} C^\nu_m(\cos\theta), \quad (\rho>R),\\
	C^1_m(\cos\theta)&=\sum_{r=0}^m \cos(2r-m)\theta
\end{split}
\end{equation*}
for $\theta \in [0, 2\pi)$, where $\nu \in \N$, $\Gamma(\nu)$ is the Gamma function, $C^{\nu}_m(x)$ is the Gegenbauer polynomial of $m$ order.
Here we compute that 
\begin{align*}
	C^1_m(\cos\theta)&=\sum_{r=0}^m \cos(2r-m)\theta 
	= \sum_{r=0}^m e^{i(2r-m)\theta} 
	= \sum_{r=-m, r-m\in 2\Z}^m e^{ir\theta}.
\end{align*}
We deal with the specialization to $\nu=1$:
\begin{align*}
	\frac{\alpha K_{1}(\alpha d(\theta))}{d(\theta)}
	&= \frac{2}{R \rho}  \sum_{m=0}^\infty A_m C^1_m(\cos\theta).
\end{align*}
Putting the modified Bessel functions and the Gegenbauer polynomial into $c(\theta)$, we compute as
\begin{align*}
c(\theta) 
&= (\rho \cos \theta-R) \frac{\alpha K_1(\alpha d (\theta))}{d (\theta) }\\
	&= \frac{2}{R \rho} (\rho \cos \theta-R)
	 \sum_{m=0}^\infty A_m C^1_m(\cos\theta) \\
&= \frac{2}{R \rho} (\rho \cos \theta-R)
	 \sum_{m=0}^\infty A_m \sum_{ n = -m, n-m\in 2\Z}^m e^{ i n \theta}\\
&= \frac{2}{R \rho} (\rho \cos \theta-R) \sum_{ n \in \Z} \sum_{m =|n|, m-n \in 2\Z}^\infty A_m e^{ i n \theta}\\
&= \frac{2}{R \rho} (\rho \cos \theta-R) \sum_{ n \in \Z} \sum_{r=0}^\infty A_{|n|+2r} e^{i n \theta}\\
&= \frac{1}{R} (e^{i\theta}+e^{-i \theta}-\frac{2R}{\rho}) \sum_{ n \in \Z} \sum_{r=0}^\infty A_{ | n | + 2 r } e^{i n \theta}\\
&= \frac{1}{R}  \sum_{ n \in \Z} \sum_{r=0}^\infty A_{ |n| + 2r } ( e^{i (n+1) \theta} + e^{ i( n - 1 )\theta }
-\frac{2R}{\rho} e^{ i n \theta})\\
&= \frac{1}{R}  \sum_{ n \in \Z} \sum_{r=0}^\infty (A_{ | n - 1 |+2r} + A_{ | n + 1 | + 2 r } -\frac{2R}{\rho} A_{ | n | + 2 r }) e^{ i n \theta}.
\end{align*}
So we put 
\begin{align*}
\tilde{A}_n = \sum_{r=0}^\infty (A_{|n-1|+2r} + A_{|n+1|+2r} -\frac{2R}{\rho} A_{|n|+2r}).
\end{align*}
Then the Fourier series expansion of $c(\theta)$ is given by
\begin{align*}
c(\theta)  = \frac1R \sum_{ n \in \Z} \tilde{A}_n e^{i n \theta}.
\end{align*}
We see that $\tilde{A}_{-n}=\tilde{A} _ n $ for $ n \in \Z$.
Then we obtain that
\begin{align}\label{tA:natural}
\tilde{A}_n = 
\sum_{r=0}^\infty (A_{ n-1+2r } + A_{ n+1+2r } -\frac{ 2R } { \rho } A_{ n + 2r } )
\end{align}
if $ n \in \N $, and that
\begin{align*}
\tilde{A}_0 
= \sum_{r=0}^\infty (2A_{1+2r}  -\frac{2R}{\rho} A_{2r}).
\end{align*}
Substituting \eqref{eq:c} into \eqref{f_omega1}, we have
\begin{align*}
 f(\omega^m)
&= \sum_{j=0}^{N-1} c(\theta_j) \omega^{mj}  \notag\\
&=\sum_{j=0}^{N-1} \frac{1}{R}  \sum_{ n \in \Z } \tilde{A} _ n \omega^{ n j } \omega^{mj}  \notag\\
&= 
\frac{1}{R}  \sum_{ n \in \Z} \tilde{A}_n
\sum_{j=0}^{N-1} \omega^{ ( n +m)j } \notag\\
&= \frac{N}{R}  \sum_{ n \in \Z} \tilde{A}_n
\delta_{ n+m \in N\Z}\\
&= \frac{N}{R}  \sum_{ n\in \Z} \tilde{A}_{ nN+m}.
\end{align*}
\end{proof}

From the expression \eqref{eq:f_omega}, we show the positivity of $f(\omega^m)$ with $  m=0, \ldots, N-1$.
As the preparations of the proof of Theorem \ref{thm:1}, we will show the following lemmas.
\begin{lemma}\label{lemm:An-1}
Assume that $x > \sqrt{  4y^2+6 - 2 \sqrt{ 4y^2+9 } }$ and $0<y<x$ 
 for  $ x, y\in \R$. 
 Then 
\begin{align*}
\sup_{ n \in \N } \frac {y}{x} \frac{ 1+n }{n} \frac{ I_{n+1}( y ) }{ I_{n}( y )} \frac{ K_{n+1}( x ) }{ K_{n}( x )} < 1.
\end{align*}
Therefore, replacing $x=\alpha \rho$ and $y=\alpha R$, we have  
\begin{equation} \label{an/an-1}
 \sup_{n \in \N} \frac{ R }{\rho }  \frac{  A_n } {  A_{n-1} } < 1.
\end{equation}
\end{lemma}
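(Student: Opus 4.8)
The plan is to bound the two Bessel-ratio factors separately by explicit algebraic functions and then collapse the resulting family of inequalities (one for each $n$) to a single elementary inequality. Writing
\begin{equation*}
T_n := \frac{y}{x}\,\frac{1+n}{n}\,\frac{I_{n+1}(y)}{I_n(y)}\,\frac{K_{n+1}(x)}{K_n(x)},
\end{equation*}
the second assertion \eqref{an/an-1} is exactly $T_n$ after putting $x=\alpha\rho$, $y=\alpha R$ (so that $R/\rho=y/x$), hence it suffices to treat the first. From $I_n(y)\sim (y/2)^n/n!$ and $K_n(x)\sim\tfrac12\Gamma(n)(2/x)^n$ one sees $T_n\to y^2/x^2<1$ as $n\to\infty$; so every term eventually lies below $1$, and the whole content of the lemma is to control the finitely many small-$n$ terms, where the factor $(1+n)/n$ is largest. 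I therefore expect the binding case to be $n=1$.

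First I would record two upper bounds of Amos type, obtainable from the three-term recurrences $I_{n-1}(y)/I_n(y)=2n/y+I_{n+1}(y)/I_n(y)$ and $K_{n+1}(x)/K_n(x)=2n/x+K_{n-1}(x)/K_n(x)$ together with their associated continued fractions (the $I$-bound also follows from the term-by-term comparison $I_{n+1}(y)/I_n(y)<\frac{y}{2(n+1)}$ in the power series, but a sharper shift is needed):
\begin{equation*}
\frac{I_{n+1}(y)}{I_n(y)} < \frac{y}{\left(n+\tfrac12\right)+\sqrt{\left(n+\tfrac12\right)^2+y^2}}, \qquad
\frac{K_{n+1}(x)}{K_n(x)} < \frac{(n+1)+\sqrt{(n+1)^2+x^2}}{x}.
\end{equation*}
The half-integer shift in the $I$-bound and the integer shift in the $K$-bound are precisely what produce the constants $6$ and $2$ in the stated threshold, so obtaining these two estimates with the correct shifts is the heart of the matter.

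Next I would insert these into $T_n$. After clearing the two denominators and using $0<y<x$, the sum-factors $\big(\sqrt{\cdot}+\cdot\big)$ coming from numerator and denominator cancel pairwise, so $T_n<1$ is implied by the clean inequality
\begin{equation*}
(n+1)\left(\sqrt{(2n+1)^2+4y^2}-(2n+1)\right) \le n\left(\sqrt{(2n+2)^2+4x^2}-(2n+2)\right).
\end{equation*}
Rewriting $\sqrt{A^2+c}-A=c/(\sqrt{A^2+c}+A)$ on each side recasts this as $\tfrac{n+1}{n}\le \tfrac{x^2}{y^2}\cdot\tfrac{\sqrt{(2n+1)^2+4y^2}+(2n+1)}{\sqrt{(2n+2)^2+4x^2}+(2n+2)}$, whose left side decreases to $1$ while the right side increases to $x^2/y^2>1$; hence the requirement is most stringent at $n=1$. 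The $n=1$ case reads $2\big(\sqrt{9+4y^2}-3\big)\le\sqrt{16+4x^2}-4$, which (isolate the $x$-radical, then square, both sides being positive because $\sqrt{9+4y^2}>3$) rearranges to exactly $x^2>4y^2+6-2\sqrt{4y^2+9}$, i.e. the hypothesis. Thus $T_n<1$ for every $n$, and since $T_n\to y^2/x^2<1$ the supremum is attained at a finite index and lies strictly below $1$.

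The main obstacle is twofold. First, establishing the two ratio bounds with the sharp shifts: the modified Bessel function $K$ has no convergent power series at the origin, so the simple series comparison available for $I$ fails, and one must argue through the recurrence/continued fraction (or cite Amos). Second, verifying rigorously the monotonicity in $n$ that reduces the entire family to $n=1$. Both are elementary, but the bookkeeping with the square roots has to be done carefully so that the threshold emerges with the stated constants.
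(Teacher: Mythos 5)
Your proposal is correct and follows essentially the same route as the paper's proof: the paper likewise bounds $I_{n+1}(y)/I_n(y)$ by $y\big/\bigl((n+\tfrac12)+\sqrt{(n+\tfrac12)^2+y^2}\bigr)$ (citing Segura) and $K_{n+1}(x)/K_n(x)$ by $\bigl((n+1)+\sqrt{(n+1)^2+x^2}\bigr)\big/x$ (citing Laforgia), observes that the resulting upper bound on $T_n$ is monotonically decreasing in $n$ so that $n=1$ is the binding case, and then solves the $n=1$ inequality, which simplifies to $\bigl(\sqrt{4y^2+9}-3\bigr)\big/\bigl(\sqrt{x^2+4}-2\bigr)<1$, to obtain exactly the stated threshold. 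Your rationalization of the surds before invoking monotonicity, and your limit argument $T_n\to y^2/x^2$ for closing the supremum, are only cosmetic variations of the paper's computation.
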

\begin{proof}[Proof of Lemma\,\ref{lemm:An-1}]
We use the following inequalities:
By referring to (\cite[Theorem \ 1.2]{Laforgia}), we have
\begin{equation*}
	\frac{ K_{\nu} (x) }{ K_{\nu-1} (x) } < \frac{ \nu + \sqrt { \nu^2 + x^2 } }{ x }, \quad (x>0, \ \nu \in \R),
\end{equation*}
and from (\cite[Theorem \ 3]{segura}), we see that
\begin{equation}\label{ineq:I_n}
0< \frac{ I_{\nu + \frac{1}{2}} (y)}  {I_{ \nu - \frac{1}{2} }(y) } < \frac{ y }{ \nu + \sqrt{ \nu^2 +y^2 } }, \quad (y>0, \ \nu \ge 0).
\end{equation}
Using \eqref{ineq:I_n}, we have for $y>0$
\begin{align*}
\frac{ I_{n + 1}  (y)}  {I_{ n }(y) } < \frac{ y }{ (n+ 1 / 2) + \sqrt{ (n+1/2)^2 +y^2 } }.
\end{align*}
Thus, we calculate that 
\begin{align}
& \frac {y}{x} \frac{ 1+n }{n} \frac{ I_{n+1}( y ) }{ I_{n}( y )} \frac{ K_{n+1}( x ) }{ K_{n}( x )} \notag\\
&<  \frac {y}{x} \frac{n+1}{n}      \frac{ y }{ (n+ 1 / 2) + \sqrt{ (n+1/2)^2 +y^2 } }     \frac{ n+1 + \sqrt { (n+1)^2 + x^2 } }{ x } \notag\\
&=  \frac {y^2}{x^2} \frac{ n+1 }{n}    \frac{ n+1 + \sqrt { (n+1)^2 + x^2 } }{  (n+ 1 / 2) + \sqrt{ (n+1/2)^2 +y^2 }  }.\label{ineq:m_i_s}
\end{align}
Regarding the sequence $  ( n+1 + \sqrt { (n+1)^2 + x^2 } ) / ( (n+ 1 / 2) + \sqrt{ (n+1/2)^2 +y^2 } ) $ as the function of $n \in \R$ and calculating the derivative, we see that the sequences,  and  $(n+1)/n$  are monotonically decreasing with respect to $n$. 
Thus, \eqref{ineq:m_i_s} attains the maximum value when $n=1$,
thereby estimating that 
\begin{align*}
 \frac {y}{x} \frac{ 1+n }{n} \frac{ I_{n+1}( y ) }{ I_{n}( y )} \frac{ K_{n+1}( x ) }{ K_{n}( x )} 
<  \frac {4y^2}{x^2} \frac{  2 + \sqrt { 4 + x^2 }  }{  3 + \sqrt { 9 + 4 y^2 }  }
= \frac{  \sqrt{ 4y^2+9 } - 3  }{ \sqrt{  x^2+4 } - 2 }.
\end{align*}
Solving the  inequality with respect to $x$ so that $   (  \sqrt{ 4y^2+9 } - 3  ) /  ( \sqrt{  x^2+4 } - 2  ) < 1$, we have 
\[
x > \sqrt{  4y^2+6 - 2 \sqrt{ 4y^2+9 } }.
\]
Therefore the statement of this lemma is shown, and by substituting $x=\alpha \rho$ and $y=\alpha R$, we have \eqref{an/an-1}
when $  \rho  >  \Big( \sqrt{  4\alpha^2 R^2 + 6 - 2 \sqrt{ 4\alpha^2 R^2+9 } } \Big)/\alpha$.
\end{proof}
\begin{remark}
The inequality $ x > \sqrt{  4y^2+6 - 2 \sqrt{ 4y^2+9 } } $ is equivalent to 
\begin{equation} 
	\label{ineq:sq-sq}
	\sqrt{ x^2 + 4 } > \sqrt{4y^2 + 9} -1.
\end{equation}
We may check the  condition of Lemma \ref{lemm:An-1} by using this inequality.
\end{remark}
\begin{remark}
Assuming that $x \ge 2y$ implies that \eqref{ineq:sq-sq}.
Thus, in practical way, we take the charge points with $\rho \ge 2R$ using the MFS.
Furthermore, assuming that \eqref{ineq:sq-sq} for $x,y >0$ implies that $x > y$.
The only condition \eqref{ineq:sq-sq}  is sufficient for Lemma \ref{lemm:An-1}.
\end{remark}

Next, we show $R A_n / ( \rho A_{n+1} ) <1$ for $ n \in \{ 0 \} \cup \N$ in the following lemma.
\begin{lemma}\label{lemm:An+1}
For any $x,y \in \R$ with $0< y <  x$ and $n \in  \N $, we have
\begin{align*}
 \frac {y}{x} \frac{ n }{n+1} \frac { I_{n}( y )}{ I_{n+1}( y ) }  \frac{ K_{n}( x )}{ K_{n+1}( x ) } < 1.
\end{align*}
Therefore, replacing $x=\alpha \rho$ and $y=\alpha R$, we have the inequality 
\begin{equation} 
  \frac{ R }{ \rho } \frac {A_n} { A_{n+1}  } < 1, \quad (n \in \{ 0 \} \cup \N).  \label{an/an+1}
\end{equation}
\end{lemma}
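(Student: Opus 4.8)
The plan is to reduce everything to the first displayed inequality for $n\in\N$ and then recover the stated bound on $A_n/A_{n+1}$ by relabelling. Indeed, with $x=\alpha\rho$ and $y=\alpha R$ the definition $A_n=(1+n)K_{n+1}(\alpha\rho)I_{n+1}(\alpha R)$ gives
\[
\frac{R}{\rho}\frac{A_n}{A_{n+1}}=\frac{y}{x}\,\frac{n+1}{n+2}\,\frac{I_{n+1}(y)}{I_{n+2}(y)}\,\frac{K_{n+1}(x)}{K_{n+2}(x)},
\]
which is exactly the left-hand side of the first inequality with $n$ replaced by $n+1$. Hence \eqref{an/an+1} for $n\in\{0\}\cup\N$ follows once the first inequality is proved for every $n\in\N$. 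Rewriting that inequality, the goal becomes the \emph{lower} bound
\[
\frac{I_{n+1}(y)}{I_{n}(y)}\,\frac{K_{n+1}(x)}{K_{n}(x)}>\frac{n}{n+1}\,\frac{y}{x},
\]
so, in contrast to Lemma \ref{lemm:An-1}, I now need lower bounds for the two forward ratios rather than upper bounds.

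For the $K$-ratio I would feed the quoted Laforgia estimate back through the three-term recurrence $K_{n+1}(x)=\tfrac{2n}{x}K_n(x)+K_{n-1}(x)$: dividing by $K_n(x)$ and using $K_n(x)/K_{n-1}(x)<(n+\sqrt{n^2+x^2})/x$ yields, after rationalisation,
\[
\frac{K_{n+1}(x)}{K_{n}(x)}>\frac{n+\sqrt{n^2+x^2}}{x}.
\]
For the $I$-ratio I would use the matching lower bound
\[
\frac{I_{n+1}(y)}{I_{n}(y)}>\frac{y}{(n+1)+\sqrt{(n+1)^2+y^2}},
\]
which is the lower half of the same Segura estimate already invoked in Lemma \ref{lemm:An-1} (alternatively it can be extracted from the recurrence for $I_\nu$ together with the quoted Segura upper bound, at the cost of one extra elementary monotonicity check).

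Multiplying the two lower bounds, the target reduces to
\[
\frac{n+\sqrt{n^2+x^2}}{(n+1)+\sqrt{(n+1)^2+y^2}}>\frac{n}{n+1},
\]
that is, to $(n+1)\sqrt{n^2+x^2}>n\sqrt{(n+1)^2+y^2}$; squaring and cancelling the common term $n^2(n+1)^2$ collapses this to $(n+1)^2x^2>n^2y^2$, which is immediate from $0<y<x$ since $(n+1)x>nx>ny$. This proves the first inequality for all $n\in\N$ and therefore \eqref{an/an+1}.

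The main obstacle is sharpness rather than algebra. As $x,y\to0$ one has $I_{n+1}(y)/I_n(y)\sim y/\bigl(2(n+1)\bigr)$ and the required lower bound behaves the same way, so the inequality is asymptotically tight; crude estimates such as $K_{n+1}(x)/K_n(x)>2n/x$ from the recurrence alone, or the wrong-direction bound $I_{n+1}(y)/I_n(y)<y/\bigl(2(n+1)\bigr)$, are useless here. The delicate point is thus to secure lower bounds on both ratios with the correct leading coefficient; once those are in place, the reduction to $(n+1)x>ny$ is routine.
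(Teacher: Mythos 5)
Your proof is correct, but it takes a genuinely different route from the paper's. The paper majorizes the two backward ratios from above ($I_n/I_{n+1}$ by Laforgia's Theorem 1.1, $K_n/K_{n+1}$ by Segura's Theorem 6), obtaining a majorant $\eta_n(x,y)$ built from half-integer shifts $n\pm\tfrac12$, and then must argue in two steps: $\eta_n(\cdot,y)$ is decreasing in $x$ for $x\ge y$, and $\eta_n(y,y)=(2+p_n)/(2+q_n)<1$ by comparing the denominators of $p_n$ and $q_n$. You instead work with lower bounds on the forward ratios. Your $I$-bound is exactly the reciprocal of the Laforgia Theorem 1.1 estimate that the paper quotes (and uses in this very proof), while your $K$-bound
\[
\frac{K_{n+1}(x)}{K_n(x)}>\frac{n+\sqrt{n^2+x^2}}{x}
\]
comes from a device the paper does not use: feeding Laforgia's bound on $K_n/K_{n-1}$ (quoted in the proof of Lemma \ref{lemm:An-1}) through the recurrence $K_{n+1}(x)=\tfrac{2n}{x}K_n(x)+K_{n-1}(x)$, which correctly converts an upper bound on the backward ratio into a lower bound on the forward one. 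Because your two bounds then have matching integer-shift form, the target collapses by squaring to $(n+1)x>ny$, with no monotonicity step at all. The trade-off is clear: the paper's Segura bound is sharper but forces the monotonicity-plus-endpoint analysis; yours is slightly weaker yet structurally matched to the $I$-bound, so the algebra closes immediately and Segura's Theorem 6 is not needed. Your index-shift reduction of \eqref{an/an+1} to the displayed inequality at index $n+1$ coincides with what the paper does implicitly.

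Two corrections to your sourcing, neither of which breaks the argument. First, the $I$-ratio lower bound is \emph{not} the ``lower half'' of the quoted Segura Theorem 3 --- as quoted, that lower half is just $0$; cite it instead as the reciprocal of Laforgia Theorem 1.1, i.e.\ $I_{\nu}(y)/I_{\nu-1}(y)>y/\bigl(\nu+\sqrt{\nu^2+y^2}\bigr)$ with $\nu=n+1$. Second, your parenthetical fallback (the $I$-recurrence plus the quoted Segura upper bound) does not work: it yields $I_{n+1}(y)/I_n(y)>\bigl(\sqrt{(n-1/2)^2+y^2}-(n+1/2)\bigr)/y$, which is negative for small $y$ and therefore cannot produce the required bound uniformly; that remark should be deleted.
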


\begin{proof}[Proof of Lemma \ref{lemm:An+1}]
We use the following inequalities:
Referring to (\cite[Theorem \ 6]{segura}), we see that 
\begin{equation}\label{segura6}
	\frac{1}{ x } \frac{ K_\nu(x) }{ K_{\nu+1}(x) } \le \frac{1}{ (\nu+ 1 / 2) + \sqrt{ (\nu- 1/2)^2 + x^2 } }, \quad \Big(x>0, \ \nu \ge \frac1 2 \Big),
\end{equation}
and from (\cite[Theorem \ 1.1]{Laforgia}),  we have
\begin{equation}\label{laforgia1.1}
	\frac{ I_{\nu-1}(y) }{I_\nu(y)} < \frac{ \nu + \sqrt{ \nu^2 + y^2 } }{ y }, \quad (y>0, \quad \nu\ge 0).
\end{equation}
By using \eqref{segura6} and \eqref{laforgia1.1}, for any $n \in \N$ we have
\begin{align*}
	& \frac {y}{x} \frac{ n }{n+1} \frac { I_{n}( y )}{ I_{n+1}( y ) }  \frac{ K_{n}( x )}{ K_{n+1}( x ) } \notag\\
	&<  \frac {y}{x} \frac{ n }{n+1} \frac{ n+1 + \sqrt{ (n+1)^2 + y^2 } }{ y } \frac{ x }{  (n+1/2) + \sqrt{ (n-1/2)^2 + x^2} } \notag\\
	&=  \frac{ n }{n+1} \frac{  n+1 + \sqrt{ (n+1)^2 + y^2 }  }{ (n+1/2) + \sqrt{ (n-1/2)^2 + x^2} }\notag\\
	&=:\eta_n (x, y). 
\end{align*}
We denote the above sequence of $n$ by $\{\eta_n (x, y)\}_n $.
We fix $y>0$.
Since $\eta_n (x, y)$ is the monotonically decreasing sequence with respect to $x$ for $x \ge y$, we  show that $\eta_n (y, y) < 1$. 
We have that
\[
\eta_n (y,y) = \frac{ 2 +  p_n }{ 2 + q_n},
\]
where
\begin{align*}
p_n&=\frac1{n+1} \left( \sqrt{(n+1)^2+y^2} - (n+1) \right)\\
&=\frac{ y^2 }{ (n+1 )^2 + (n+1)\sqrt{ (n+1)^2 + y^2} },\\
q_n&= \frac{ 1 }{n} \left( \sqrt{(n-1/2)^2+y^2} -(n-1/2) \right)\\
&=\frac{  y^2 }{ n(n-1/2 ) + n \sqrt{ (n - 1 / 2 )^2 + y^2 } }.
\end{align*}
Comparing the denominators of $p_n$ and $q_n$, we see that $q_n >  p_n$ for any $n \in \N$.
It yields that $\eta_n (x, y) < 1$ for any $x $ and $y$ with $0< y <  x$, and then $RA_n/\rho A_{n+1}  <1$.
\end{proof}

We  show the following lower boundedness.
\begin{lemma}\label{lemma:A_n_tild_A_n}
Assume that $\rho  > \Big( \sqrt{  4\alpha^2 R^2 +6 - 2 \sqrt{ 4\alpha^2 R^2+9 } } \Big)/\alpha$.
Then we have
\begin{align}
	A_{n-1} + A_{n+1} - \frac { 2 R}{\rho} A_n >0, \quad (n \in \N), \label{ineq:A_n}
\end{align}
and 
\begin{align}
		 \tilde{A}_n > 0, \quad (n \in \Z).  \label{ineq:tild_A_n}
\end{align}
\end{lemma}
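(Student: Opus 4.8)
The plan is to deduce both inequalities directly from the two ratio bounds \eqref{an/an-1} and \eqref{an/an+1} established in Lemmas \ref{lemm:An-1} and \ref{lemm:An+1}. The key observation is that the quantity $A_{n-1} + A_{n+1} - \frac{2R}{\rho} A_n$ is precisely the sum of the two ``gaps'' controlled by those lemmas, so \eqref{ineq:A_n} is essentially a restatement of them, and \eqref{ineq:tild_A_n} then follows term-by-term from the series defining $\tilde{A}_n$.

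First I would prove \eqref{ineq:A_n}. Writing $\frac{2R}{\rho} A_n = \frac{R}{\rho} A_n + \frac{R}{\rho} A_n$, I apply \eqref{an/an-1} to the first copy and \eqref{an/an+1} to the second. Since $\frac{R}{\rho} A_n < A_{n-1}$ (valid for $n \in \N$) and $\frac{R}{\rho} A_n < A_{n+1}$ (valid for all $n \geq 0$, in particular for $n \in \N$), adding the two yields $\frac{2R}{\rho} A_n < A_{n-1} + A_{n+1}$, which is exactly \eqref{ineq:A_n}. This step is immediate.

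Next I would establish \eqref{ineq:tild_A_n}. Because $\tilde{A}_{-n} = \tilde{A}_n$, it suffices to treat $n \geq 0$. For $n \in \N$ I use the expression \eqref{tA:natural}, whose $r$-th summand is $A_{(n+2r)-1} + A_{(n+2r)+1} - \frac{2R}{\rho} A_{n+2r}$; setting $m = n + 2r \geq 1$, this is precisely the left-hand side of \eqref{ineq:A_n} at the index $m \in \N$, hence strictly positive, so the whole series is positive and $\tilde{A}_n > 0$. The main obstacle is the remaining case $n = 0$, whose series $\tilde{A}_0 = \sum_{r=0}^\infty \left( 2 A_{1+2r} - \frac{2R}{\rho} A_{2r} \right)$ does not fit the template of \eqref{ineq:A_n}, since there is no $A_{-1}$ term available. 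Here I would instead factor each summand as $2\left( A_{1+2r} - \frac{R}{\rho} A_{2r} \right)$ and invoke only \eqref{an/an+1} at the index $2r \geq 0$, which gives $\frac{R}{\rho} A_{2r} < A_{2r+1} = A_{1+2r}$; thus every summand is positive and $\tilde{A}_0 > 0$.

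Overall the argument is short once the two ratio lemmas are in hand. The only subtlety is to notice that the $n = 0$ case must be handled by the single bound \eqref{an/an+1} rather than by the combined estimate \eqref{ineq:A_n}, because the term-grouping for $\tilde{A}_0$ differs from that for $\tilde{A}_n$ with $n \geq 1$.
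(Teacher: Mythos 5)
Your proposal is correct and follows essentially the same route as the paper's own proof: both obtain \eqref{ineq:A_n} by splitting $\frac{2R}{\rho}A_n$ into two halves controlled by \eqref{an/an-1} and \eqref{an/an+1}, deduce $\tilde{A}_n>0$ for $n\neq 0$ term by term from \eqref{ineq:A_n}, and handle $\tilde{A}_0$ separately using only \eqref{an/an+1} at indices $2r\ge 0$. The only cosmetic difference is that you reduce to $n\ge 0$ via the symmetry $\tilde{A}_{-n}=\tilde{A}_n$, whereas the paper treats $n\neq 0$ directly through the absolute values in the definition of $\tilde{A}_n$; the content is identical.
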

\begin{proof}[Proof of Lemma \ref{lemma:A_n_tild_A_n}]
Replacing $x = \alpha \rho$ and $y=\alpha R$ and utilizing Lemma \ref{lemm:An-1} and \ref{lemm:An+1},  for $\rho  > \Big( \sqrt{  4\alpha^2 R^2 +6 - 2 \sqrt{ 4\alpha^2 R^2+9 } } \Big)/\alpha$ and $ n \in \N $ we have
\[
A_{n-1}> \frac{ R }{ \rho } A_n, \quad A_{n+1} > \frac{ R }{ \rho } A_n .
\]
Therefore, for any $n \in \N$ we have
\begin{align*}
A_{n-1} + A_{n+1} - \frac{2R}{\rho} A_n 
> 0, 
\end{align*}
which implies \eqref{ineq:A_n}.
Owing to \eqref{ineq:A_n}  we have 
\[
 \tilde{A}_n = \sum_{r=0}^\infty (A_{|n-1|+2r} + A_{|n+1|+2r} -\frac{2R}{\rho} A_{|n|+2r}) > 0 
\]
in the case of $n \neq 0$.
Since 
\begin{align*}
2A_{n+1} - \frac{2R}{\rho} A_n 
> 0, \quad (n \in \{0 \} \cup \N)
\end{align*}
from \eqref{an/an+1}, we see that 
\begin{equation*} 
	 \tilde{A}_0 = \sum_{r=0}^\infty (2A_{1+2r}  -\frac{2R}{\rho} A_{2r}) > 0.
\end{equation*}
Summarizing above, we have \eqref{ineq:tild_A_n}.
\end{proof}

Now we show the proof of Theorem \ref{thm:1}.
\begin{proof}[Proof of Theorem\,\ref{thm:1}]
From \eqref{ineq:tild_A_n}, we have $ \tilde{A}_n >0 $ for any $n \in \Z$.
Thus, from \eqref{eq:f_omega}  we compute that 
\begin{equation*} 
	f(\omega^m) = \frac{N}{R} \sum_{ n \in \Z }  \tilde{A}_{ nN+m } > 0
\end{equation*}
for $m=0,\ldots, N-1$. 
This implies the assertion of Theorem \ref{thm:1}.
\end{proof}

%
%
%

\section{Energy method for the error}\label{sec:NV}
In this section, we provides the algorithm to calculate the error bound for this approximate solution by the energy method.
By using the MFS, we have constructed the approximate solution \eqref{sol:mfs}, which satisfies the boundary condition on the finite $N$ collocation points of $\partial \Omega$.
Set the error function as 
\[
h_N(x)=g(x)-g_N(x).
\]
We note that this term means the error between the solution $g$ and the approximate solution $g_N$.
As both $g$ and $g_N$ satisfy the principal equation of \eqref{eq:gene_prob},
by substituting $h_N$ for the equation and the boundary condition of \eqref{eq:gene_prob}, we have 
\begin{equation}
\label{eq:h}
\left \{ 
\begin{aligned}
&\Delta  h_N -\alpha^2  h_N
=0, \quad x \in \Omega \\
&\dfrac{\partial  h_N}{\partial  n}
= s - \frac{\partial g_N}{\partial n}, \quad  x \in \partial\Omega.
\end{aligned}
\right. 
\end{equation}
To estimate the error bound, we perform a priori estimate.
First we show the following lemmas.
\begin{lemma}
\label{lemm:trace_cir}
Let $\Omega \subset \R^2$ be $B(0,R)$ and $u \in H^1(\Omega)$.
Then, the trace $u|_{\partial \Omega}$ can be interpreted as a function in $L^2(\partial \Omega)$ satisfying
\begin{equation}\label{ineq:trace_cir}
	\left\| u \right\|_{L^2(\partial \Omega)} 
	\le C_{\Omega} \left\| u \right\|_{H^1(\Omega)} ,
\end{equation}
where $C_{\Omega} := (1 + 2/R)$.
\end{lemma}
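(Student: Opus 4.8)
The plan is to prove the estimate first for functions $u \in C^1(\overline{\Omega})$, which are dense in $H^1(\Omega)$, and then pass to the limit. Since the disk has smooth boundary the trace operator extends continuously to $H^1(\Omega)$, so the bound obtained for smooth $u$ persists for all $u\in H^1(\Omega)$ and, at the same time, justifies interpreting $u|_{\partial\Omega}$ as an element of $L^2(\partial\Omega)$. Thus the whole problem reduces to establishing the inequality on the dense class.

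The heart of the argument is a one–dimensional fundamental-theorem-of-calculus identity in the radial variable, with a weight chosen so that the planar area element $r\,dr\,d\theta$ appears naturally. Writing $u=u(r,\theta)$ in polar coordinates and integrating $\partial_r(r^2u^2)$ from $0$ to $R$, I would obtain, for each fixed $\theta$,
\[
R^2\,u(R,\theta)^2=\int_0^R\big(2r\,u^2+2r^2\,u\,\partial_r u\big)\,dr,
\]
the lower endpoint contributing nothing because $r^2u^2$ vanishes at $r=0$ for smooth $u$. Integrating over $\theta\in[0,2\pi)$ and using $\int_0^{2\pi}u(R,\theta)^2\,d\theta=\|u\|_{L^2(\partial\Omega)}^2/R$, the left side becomes $R\,\|u\|_{L^2(\partial\Omega)}^2$.

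On the right side, the first term is exactly $2\|u\|_{L^2(\Omega)}^2$. For the cross term I would use $r^2\le Rr$ on $[0,R]$ together with $|\partial_r u|\le|\nabla u|$ and the Cauchy--Schwarz inequality in the measure $r\,dr\,d\theta$, which yields
\[
\Big|2\int_0^{2\pi}\!\!\int_0^R r^2\,u\,\partial_r u\,dr\,d\theta\Big|\le 2R\,\|u\|_{L^2(\Omega)}\,\|\nabla u\|_{L^2(\Omega)}.
\]
Dividing through by $R$ gives $\|u\|_{L^2(\partial\Omega)}^2\le \tfrac2R\|u\|_{L^2(\Omega)}^2+2\|u\|_{L^2(\Omega)}\|\nabla u\|_{L^2(\Omega)}$, and a final application of Young's inequality $2ab\le a^2+b^2$ bounds the right-hand side by $(1+2/R)\|u\|_{H^1(\Omega)}^2$, so taking square roots yields exactly $C_\Omega=1+2/R$.

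The step I expect to be the only real subtlety is the choice of weight. The naive identity coming from $\partial_r(r\,u^2)$ produces an unweighted radial integral $\int_0^R u^2\,dr$, which cannot be controlled by the $H^1$ norm because of the $1/r$ mismatch against the area measure near the origin. Using the weight $r^2$ instead makes both surviving terms carry the correct factor of $r$, so everything reduces cleanly to the genuine $\|u\|_{L^2(\Omega)}$ and $\|\nabla u\|_{L^2(\Omega)}$ norms; the remaining manipulations are elementary, and the density and trace-extension steps are routine for the smooth disk.
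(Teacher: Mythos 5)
Your proposal is correct and follows essentially the same route as the paper's own proof in Appendix C: the weighted identity obtained by integrating $\partial_r(r^2u^2)$ over $[0,R]$, integration in $\theta$ against the area element $r\,dr\,d\theta$, a Cauchy--Schwarz/Young step to reach $(1+2/R)\|u\|_{H^1(\Omega)}^2$, and a density argument to pass from smooth functions to $H^1(\Omega)$ (which the paper writes out via a Cauchy-sequence construction of the trace, while you compress it to ``routine''). Note that both your argument and the paper's actually yield the sharper constant $\sqrt{1+2/R}$ after taking square roots --- consistent with the value $C_\Omega=(1+2/R)^{1/2}$ used later in Lemma 4.2 --- so the lemma as stated follows a fortiori.
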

The proof is put in the Appendix \ref{appen:lemm_trace}.
\begin{lemma}\label{lemm:H^1}
Suppose that $\Omega = B(0,R)$, and $u \in H^2(\Omega)$ satisfies $ \Delta u = \alpha^2 u  $ in $\Omega$.
Then we have 
\begin{equation*} 
	\| u \|_{H^1(\Omega)} \le  \frac{C_2}{ C_{\Omega} } \left\| \frac{\partial u}{ \partial n } \right\|_{L^2(\partial \Omega)},
\end{equation*}
where $C_2 := \min \{ 1, \alpha^2 \}$
\end{lemma}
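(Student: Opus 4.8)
The plan is to carry out the standard energy method: test the equation against $u$ itself via Green's first identity, use $\Delta u = \alpha^2 u$ to convert the interior Laplacian term into an $L^2$ term, and then pit the resulting coercive left-hand side against the boundary term by Cauchy--Schwarz together with the trace inequality of Lemma~\ref{lemm:trace_cir}.

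First I would integrate by parts. Because $u \in H^2(\Omega)$, Green's first identity applies with the test function $u$, and after substituting $\Delta u = \alpha^2 u$ into $\int_\Omega u\,\Delta u\,dx$ it yields the energy identity
\begin{equation*}
\int_{\Omega} |\nabla u|^2 \, dx + \alpha^2 \int_{\Omega} u^2 \, dx = \int_{\partial \Omega} u \, \frac{\partial u}{\partial n} \, ds .
\end{equation*}
The left-hand side is a weighted $H^1$-norm, and bounding both weights below by $C_2 = \min\{1,\alpha^2\}$ gives the coercivity estimate $C_2 \, \|u\|_{H^1(\Omega)}^2 \le \int_{\Omega} |\nabla u|^2 \, dx + \alpha^2 \int_{\Omega} u^2 \, dx$. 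For the right-hand side I would apply Cauchy--Schwarz on $L^2(\partial\Omega)$ and then Lemma~\ref{lemm:trace_cir}:
\begin{equation*}
\int_{\partial \Omega} u \, \frac{\partial u}{\partial n} \, ds \le \|u\|_{L^2(\partial\Omega)} \left\| \frac{\partial u}{\partial n} \right\|_{L^2(\partial\Omega)} \le C_\Omega \, \|u\|_{H^1(\Omega)} \left\| \frac{\partial u}{\partial n} \right\|_{L^2(\partial\Omega)} .
\end{equation*}

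Chaining these two bounds through the energy identity gives $C_2 \|u\|_{H^1(\Omega)}^2 \le C_\Omega \|u\|_{H^1(\Omega)} \|\partial u/\partial n\|_{L^2(\partial\Omega)}$. If $\|u\|_{H^1(\Omega)} = 0$ the inequality is trivial; otherwise I would divide by $\|u\|_{H^1(\Omega)}$ to conclude the $H^1$-bound in terms of $\|\partial u/\partial n\|_{L^2(\partial\Omega)}$, with the constant emerging as the ratio of the trace constant $C_\Omega$ to the coercivity constant $C_2$.

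The computation is routine, so there is no deep obstacle; the points demanding care are that the coercivity constant must be $\min\{1,\alpha^2\}$ rather than $1$ (for $\alpha < 1$ it is the zeroth-order term that controls the weak direction of the $H^1$-norm), and that Green's identity is legitimate only because the hypothesis $u \in H^2(\Omega)$ together with Lemma~\ref{lemm:trace_cir} guarantees that both the trace of $u$ and the normal derivative $\partial u/\partial n$ are well-defined elements of $L^2(\partial\Omega)$.
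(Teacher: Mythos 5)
Your proof is correct and takes essentially the same route as the paper's: the same energy identity obtained by multiplying $\Delta u = \alpha^2 u$ by $u$ and applying Green's formula, the same coercivity bound with $C_2=\min\{1,\alpha^2\}$, and the same chain of Cauchy--Schwarz plus the trace inequality of Lemma~\ref{lemm:trace_cir}; the only cosmetic difference is that you divide by $\|u\|_{H^1(\Omega)}$ at the end, whereas the paper absorbs the term $\frac{C_2}{2}\|u\|_{H^1(\Omega)}^2$ via Young's inequality. Both arguments produce the constant $C_{\Omega}/C_2$ rather than the $C_2/C_{\Omega}$ printed in the lemma statement, which is evidently a typo in the paper (consistent with $C_3 := C_{\Omega}^2/C_2^2$ in Lemma~\ref{lemm:H^2_boun}).
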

\begin{proof}[Proof of Lemma \ref{lemm:H^1}]
Multiplying $ \Delta u = \alpha^2 u  $ by $u$ and using the Green formula, we have 
\begin{equation*}
	\alpha^2\| u \|_{L^2(\Omega)}^2 = \int_{\Omega}u \Delta u dx
	 = \int_{\partial \Omega}\frac{\partial u }{\partial n}u dl - \| \nabla u \|^2_{L^2(\Omega)}.
\end{equation*}
Then we obtain that  
\begin{equation*}
	C_2 \| u \|^2_{H^1(\Omega)} 
	\le \| \nabla u \|^2_{L^2(\Omega)}+\alpha^2\| u \|_{L^2(\Omega)}^2
	= \int_{\partial \Omega}\frac{\partial u}{\partial n}u dl,
 \end{equation*}
where $C_{2}=\min \{ 1, \alpha^2 \}$.
Schwarz inequality and the trace operator yields that
\begin{align}
\int_{\partial \Omega}\frac{\partial u}{\partial n}u dl
	&\le \left\|  \frac{\partial u }{\partial n} \right\|_{L^2(\partial \Omega)} \| u \|_{L^2(\partial\Omega)} \notag\\
	&\le C_{\Omega}\left\|  \frac{\partial u }{\partial n} \right\|_{L^2(\partial \Omega)} \| u \|_{H^1(\Omega)} \notag\\
	&\le  \frac{C_{\Omega}^2}{2C_{2} }\left\|  \frac{\partial u }{\partial n} \right\|_{L^2(\partial \Omega)}^2 
	+ \frac{C_{2} }{2} \| u \| ^2_{H^1(\Omega)}, \notag
\end{align}
where $C_{\Omega} = (1 + 2/R)^{1/2}$ is a positive constant specified from the trace operator in $\Omega$ as shown in Lemma \ref{lemm:trace_cir}.
Finally, we see that 
\begin{equation*} 
\| u \|^2_{H^1(\Omega)} 
\le  \frac{ C_{ \Omega } ^2 } { C_{2}^2 } \left\|  \frac{\partial u }{\partial n} \right\|_{L^2(\partial \Omega)}^2.
\end{equation*}

\end{proof}

Next, we show following lemmas.
\begin{lemma}\label{lemm:H^2}
Suppose that $\Omega \subseteq \R^2$ is an arbitrary domain, and $u \in H^2(\Omega)$.
Then we have
\begin{equation*} 
	\| u \|^2_{H^2(\Omega)} 
	\le \| u \|^2_{H^1(\Omega)} + \| u_x \|^2_{H^1(\Omega)} + \| u_y \|^2_{H^1(\Omega)}.
\end{equation*}
\end{lemma}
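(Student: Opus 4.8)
The plan is to reduce everything to elementary bookkeeping of the $L^2$-norms of the partial derivatives of $u$ up to second order. First I would write the squared $H^2$-norm explicitly, under the multi-index convention in which the single mixed second derivative contributes one term:
\begin{equation*}
\| u \|^2_{H^2(\Omega)} = \| u \|^2_{L^2} + \| u_x \|^2_{L^2} + \| u_y \|^2_{L^2} + \| u_{xx} \|^2_{L^2} + \| u_{xy} \|^2_{L^2} + \| u_{yy} \|^2_{L^2},
\end{equation*}
where all $L^2$-norms are taken over $\Omega$.

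Next I would expand each of the three pieces on the right-hand side in the same fashion:
\begin{align*}
\| u \|^2_{H^1} &= \| u \|^2_{L^2} + \| u_x \|^2_{L^2} + \| u_y \|^2_{L^2}, \\
\| u_x \|^2_{H^1} &= \| u_x \|^2_{L^2} + \| u_{xx} \|^2_{L^2} + \| u_{xy} \|^2_{L^2}, \\
\| u_y \|^2_{H^1} &= \| u_y \|^2_{L^2} + \| u_{yx} \|^2_{L^2} + \| u_{yy} \|^2_{L^2}.
\end{align*}
At this point I would invoke the symmetry of the mixed weak derivatives for $u \in H^2(\Omega)$, namely $u_{xy} = u_{yx}$ in $L^2(\Omega)$, so that $\| u_{yx} \|^2_{L^2} = \| u_{xy} \|^2_{L^2}$.

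Summing the three identities and comparing with the expression for $\| u \|^2_{H^2(\Omega)}$, I would observe that the right-hand side exceeds the left-hand side precisely by the nonnegative quantity $\| u_x \|^2_{L^2} + \| u_y \|^2_{L^2} + \| u_{xy} \|^2_{L^2}$. Indeed, each first-order term $\| u_x \|^2_{L^2}$ and $\| u_y \|^2_{L^2}$ appears twice on the right (once in $\| u \|^2_{H^1}$ and once in the corresponding $\| u_x \|^2_{H^1}$ or $\| u_y \|^2_{H^1}$) but only once in the $H^2$-norm, while the mixed second-order term ends up with coefficient two rather than one. Discarding these surplus nonnegative terms yields the claimed inequality.

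There is no genuine obstacle here: the lemma is a purely combinatorial count of which derivative norms occur on each side. The only point requiring care is fixing the convention for the $H^2$-norm (multi-index versus summing over all ordered index pairs) together with the accompanying use of $u_{xy} = u_{yx}$. Any consistent convention gives the same conclusion, since the symmetry of the mixed weak derivative is exactly what reconciles the two conventions, leaving only the weighting of that single term to differ.
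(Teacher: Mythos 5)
Your proposal is correct and follows essentially the same route as the paper: both expand $\| u \|^2_{H^2(\Omega)}$ and the three $H^1$-norms into their constituent $L^2$-norms of derivatives and observe that the right-hand side exceeds the left-hand side by the nonnegative surplus $\| u_x \|^2_{L^2(\Omega)} + \| u_y \|^2_{L^2(\Omega)} + \| u_{xy} \|^2_{L^2(\Omega)}$. The only difference is cosmetic: you make explicit the use of the symmetry $u_{xy}=u_{yx}$ of weak derivatives, which the paper uses tacitly when writing both mixed terms as $\| u_{xy} \|^2_{L^2(\Omega)}$.
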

\begin{proof}[Proof of Lemma \ref{lemm:H^2}]
\begin{align*}
	\text{LHS}
	&=\| u_{xx} \|_{L^2(\Omega)}^2 + \|  u_{yy} \|_{L^2(\Omega)}^2 + \|  u_{xy} \|_{L^2(\Omega)}^2+ \| \nabla u \|_{L^2(\Omega)}^2+ \|  u \|_{L^2(\Omega)}^2,\\
	\text{RHS}
	&=\| u_{xx} \|_{L^2(\Omega)}^2 + \|  u_{yy} \|_{L^2(\Omega)}^2 + 2 \|  u_{xy} \|_{L^2(\Omega)}^2+ 2 \| \nabla u \|_{L^2(\Omega)}^2+ \|  u \|_{L^2(\Omega)}^2.
\end{align*}
\end{proof}
\begin{lemma}\label{lemm:H^2_boun}
Suppose that $\Omega = B(0,R)$, and $u \in H^3(\Omega)$ satisfies $ \Delta u = \alpha^2 u  $ in $\Omega$.
Then we have
\begin{equation*} 
	\| u \|^2_{H^2(\Omega)} 
	\le C_3 \left \{  \left\|  \frac{ \partial u }{\partial n} \right\|_{L^2(\partial \Omega)}^2    
	+ \left\|  \frac{ \partial u_x }{\partial n} \right\|_{L^2(\partial \Omega)}^2 
	+  \left\|  \frac{ \partial u_y }{\partial n} \right\|_{L^2(\partial \Omega)}^2 \right\},
\end{equation*}
where $C_3 : = C_{ \Omega } ^2 / C_{2}^2 $.
\end{lemma}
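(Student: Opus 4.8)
The plan is to chain together the three preceding lemmas, exploiting the fact that differentiating the equation $\Delta u = \alpha^2 u$ with respect to a Cartesian coordinate reproduces the same equation for the derivative. Since $\Delta$ commutes with $\partial_x$ and $\partial_y$ and $\alpha$ is a constant, the functions $u_x$ and $u_y$ each satisfy $\Delta u_x = \alpha^2 u_x$ and $\Delta u_y = \alpha^2 u_y$ in $\Omega$. The hypothesis $u \in H^3(\Omega)$ is precisely what guarantees $u_x, u_y \in H^2(\Omega)$, so that Lemma \ref{lemm:H^1}, which requires the $H^2$ regularity, is applicable to each of the first derivatives as well as to $u$ itself.

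First I would invoke Lemma \ref{lemm:H^2} to bound the full $H^2$ norm by the three $H^1$ norms,
\[
\| u \|^2_{H^2(\Omega)} \le \| u \|^2_{H^1(\Omega)} + \| u_x \|^2_{H^1(\Omega)} + \| u_y \|^2_{H^1(\Omega)}.
\]
Next I would apply Lemma \ref{lemm:H^1} separately to $u$, to $u_x$, and to $u_y$, each of which solves the modified Helmholtz equation; this yields
\[
\| u \|^2_{H^1(\Omega)} \le \frac{C_\Omega^2}{C_2^2} \left\| \frac{\partial u}{\partial n} \right\|^2_{L^2(\partial \Omega)},
\]
together with the analogous bounds for $u_x$ and $u_y$ carrying $\partial u_x/\partial n$ and $\partial u_y/\partial n$ on the right-hand side. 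Summing the three estimates and substituting into the bound from Lemma \ref{lemm:H^2} gives the assertion with $C_3 = C_\Omega^2 / C_2^2$, exactly matching the constant claimed in the statement.

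The only point that genuinely requires care is the justification that $u_x$ and $u_y$ honestly solve $\Delta v = \alpha^2 v$ and lie in $H^2(\Omega)$; this is where the elevated regularity assumption $u \in H^3(\Omega)$ (rather than merely $H^2$) is used, since it is what licenses the application of Lemma \ref{lemm:H^1} to the first-order derivatives. Apart from this bookkeeping on regularity and the commutation of $\Delta$ with $\partial_x, \partial_y$, the argument is a direct assembly of the earlier lemmas and introduces no new estimate.
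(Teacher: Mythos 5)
Your proposal is correct and follows essentially the same route as the paper: bound $\| u \|^2_{H^2(\Omega)}$ by the three $H^1$ norms via Lemma \ref{lemm:H^2}, then apply Lemma \ref{lemm:H^1} to $u$, $u_x$, and $u_y$ separately and sum. Your explicit justification that $u_x$ and $u_y$ again satisfy $\Delta v = \alpha^2 v$ and lie in $H^2(\Omega)$ thanks to $u \in H^3(\Omega)$ is left implicit in the paper, so your write-up is if anything slightly more careful.
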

\begin{proof}[Proof of Lemma \ref{lemm:H^2_boun}]
Applying Lemma \ref{lemm:H^1} to $u, u_x$ and $u_y$, we have 
\begin{align*}
	\| u \|^2_{H^1(\Omega)} 
	&\le C_3 \left\|  \frac{\partial u }{\partial n} \right\|_{L^2(\partial \Omega)}^2,\\
	\| u_x \|^2_{H^1(\Omega)} 
	&\le C_3 \left\|  \frac{ \partial u_x }{\partial n} \right\|_{L^2(\partial \Omega)}^2,\\
	\| u_y \|^2_{H^1(\Omega)} 
	&\le C_3 \left\|  \frac{ \partial u_y }{\partial n} \right\|_{L^2(\partial \Omega)}^2.
\end{align*}
Combining these inequalities and Lemma \ref{lemm:H^2}, we have the assertion of this lemma.
\end{proof}

Applying Lemma \ref{lemm:H^2_boun} to $h_N$ in \eqref{eq:h}, we obtain that
\begin{align}
	\| h_{N} \|^2_{H^2(\Omega)} 
	\le C_3 \left (  \left\|  \frac{ \partial h_{N} }{\partial n} \right\|_{L^2(\partial \Omega)}^2    + \left\|  \frac{ \partial h_{N,x} }{\partial n} \right\|_{L^2(\partial \Omega)}^2 +  \left\|  \frac{ \partial h_{N,y} }{\partial n} \right\|_{L^2(\partial \Omega)}^2 \right ) \label{est:H^2}.
\end{align}
As the function $s$ is given,  the error in $H^2(\Omega)$ is given by these boundary integrations.
Eventually, the Sobolev embedding theorem yields that the error $h_N$ belonging to $C(\Omega)$ can be bounded by \eqref{est:H^2}.

\begin{remark}
If the exact value of $C_\Omega$ is obtained in the general Jordan domain with smooth boundary, the above algorithm for the error to calculate can be extended in the general Jordan domain with smooth boundary.
\end{remark}

\section{Estimates of the Fourier coefficents}\label{sec:proof2}
From the algorithm in previous section the  error bound between the exact solution $g$ and the approximate solution $g_N$ in $H^2(\Omega)$ is given by  the boundary integrations.
In this section 
we will prepare some lemmas before  the  proof of Theorem \ref{thm:2}. 

\subsection{ Upper and lower bounds of the Fourier coefficient  ${\tilde A}_n$}
First, we will estimate the upper and lower bounds of the Fourier coefficient  ${\tilde A}_n$ for $c(\theta)$. 
We use the following inequality \cite{Gaunt}:
\begin{equation*}
	\frac{ 2^{n-1}\Gamma(n)e^{-x} }{ x^n } < K_n(x) < \frac{2^{n-1} \Gamma(n) }{x^n}, \quad (x>0, \quad n\in \N).
\end{equation*}
Also the integration form of the modified Bessel function of the first kind
\[
I_{n}(z) = \frac{ ( z/2)^n  }{\Gamma( n + 1/2)\Gamma( 1/2)} \int_{-1}^1 e^{-zt}(1-t^2 )^{n - \frac12}dt, \quad ( n \in \{0 \} \cup \N )
\]
yields that 
\begin{equation*}
	\frac{ ( y / 2 )^n e^{-y} }{n!} < I_n(y) < \frac{ ( y / 2 )^n e^{y} }{n!}.
\end{equation*}
Combining these inequalities, we have
\begin{equation}\label{An:u-l}
	\frac 12 \Big( \frac{R}{\rho} \Big)^{n+1} e^{-\alpha(\rho + R)} < A_n < \frac 12 \Big( \frac{R}{\rho} \Big)^{n+1} e^{\alpha R}, \quad ( n \in \{0 \} \cup \N).
\end{equation}

We show the following the formulas and  the lower bounds.
\begin{lemma}\label{lemm:A-n_symm}
\begin{align}
		&\tilde{A}_n = \tilde{A}_{-n}, \quad (n \in \Z), \label{AN:symm1}\\
		&\sum_{l\in \Z} {\tilde A}_{lN+n}  = \sum_{l\in \Z} {\tilde A}_{lN-n}, \quad (n \in \Z). \notag
\end{align}
\end{lemma}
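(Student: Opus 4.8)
The plan is to derive both identities directly from the defining formula for $\tilde{A}_n$, exploiting the fact that $n$ enters only through the three absolute values $|n-1|$, $|n|$, and $|n+1|$.

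For the first identity \eqref{AN:symm1}, I would substitute $-n$ for $n$ in
\[
\tilde{A}_n = \sum_{r=0}^\infty \left( A_{|n-1|+2r} + A_{|n+1|+2r} - \frac{2R}{\rho} A_{|n|+2r} \right).
\]
Since $|{-n}-1| = |n+1|$, $|{-n}+1| = |n-1|$, and $|{-n}| = |n|$, the substitution merely interchanges the first two summands inside each bracket and leaves the third one fixed. By commutativity of addition the series is unchanged, so $\tilde{A}_{-n} = \tilde{A}_n$. This is the symmetry already noted in passing in the proof of Proposition \ref{thm:3}, so this step is essentially immediate.

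For the second identity I would use the symmetry just established together with a reindexing of the sum over $l \in \Z$. Applying \eqref{AN:symm1} with the argument $lN-n$ gives $\tilde{A}_{lN-n} = \tilde{A}_{-(lN-n)} = \tilde{A}_{(-l)N+n}$, whence
\[
\sum_{l \in \Z} \tilde{A}_{lN-n} = \sum_{l \in \Z} \tilde{A}_{(-l)N+n} = \sum_{l \in \Z} \tilde{A}_{lN+n},
\]
the last equality being the bijective substitution $l \mapsto -l$ on $\Z$. Thus the second identity is a formal consequence of the first.

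The only point requiring care, and the nearest thing to an obstacle, is justifying that these series converge absolutely so that the term-by-term symmetrization and reindexing are legitimate. This is guaranteed by the upper bound in \eqref{An:u-l}: since $A_n < \tfrac12 (R/\rho)^{n+1} e^{\alpha R}$ with $R < \rho$, the coefficients $A_n$ decay geometrically, so each inner sum defining $\tilde{A}_n$ converges absolutely, and the outer sum over $l$ (whose indices $|lN+n|$ grow without bound) does as well. With absolute convergence in hand, both identities reduce to the elementary symmetric structure of the definition and no further work is needed.
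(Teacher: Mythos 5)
Your proof is correct and follows essentially the same route as the paper: the symmetry $\tilde{A}_{-n}=\tilde{A}_n$ comes from the invariance of $\{|n-1|,|n|,|n+1|\}$ under $n\mapsto -n$ (as already noted in the proof of Proposition \ref{thm:3}), and the second identity follows by combining this with the reindexing $l\mapsto -l$, exactly as in the paper's two-line computation. Your additional remark on absolute convergence via \eqref{An:u-l} is a sound (and welcome) point of rigor that the paper leaves implicit, but it does not change the argument.
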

\begin{proof}[Proof of Lemma \ref{lemm:A-n_symm}]
We compute that
\begin{align*}
\sum_{l\in \Z} {\tilde A}_{lN+n}
=\sum_{ l\in \Z} {\tilde A}_{ -lN+n}
=\sum_{ l\in \Z} {\tilde A}_{ lN-n}.
\end{align*}
We changed the variable as $l=-l$ in the first equation, and used \eqref{AN:symm1} in the second equation.
\end{proof}
\begin{lemma}\label{lemm:A-n_lower}
Under the assumption of Theorem \ref{thm:1},
there exist a positive constant $C_4 $  independent of $N$ such that 
\begin{equation}\label{A_til_p:l}
	\tilde{A}_n  \ge C_4 \Big(  \frac R \rho  \Big)^{ |n| }, \quad (n \in \Z),
\end{equation}
and
\begin{equation}\label{A_til:l_lN+n}
 \sum_{l\in \Z} {\tilde A}_{lN+n}  > C_4 \Big( \frac R \rho\Big)^{  \min\{ N-n, n \} }, \quad (0 \le n \le N),
\end{equation}
and 
\begin{equation}\label{A_til:l_p}
\sum_{l\in \Z} {\tilde A}_{lN+n} >   C_4  \Big( \frac R \rho\Big)^{ \frac N 2}, \quad (n \in \N).
\end{equation}
\end{lemma}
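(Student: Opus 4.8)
The plan is to prove all three bounds from the single quantitative estimate \eqref{A_til_p:l}, and then to derive \eqref{A_til:l_lN+n} and \eqref{A_til:l_p} from it using the positivity of the coefficients $\tilde{A}_n$ together with the periodicity of the aliased sum. So the real work is to establish \eqref{A_til_p:l}.

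For \eqref{A_til_p:l} I fix $n \in \N$. Since every summand $A_{m-1} + A_{m+1} - \frac{2R}{\rho}A_m$ is positive by \eqref{ineq:A_n}, I retain only the $r=0$ term of the series defining $\tilde{A}_n$, giving $\tilde{A}_n \ge A_{n-1} + A_{n+1} - \frac{2R}{\rho}A_n$. The crude two-sided bounds \eqref{An:u-l} alone are too lossy here, because the lower bound carries the factor $e^{-\alpha(\rho+R)}$ while the upper bound on $A_n$ carries $e^{\alpha R}$, so a direct substitution need not stay positive; this mismatch is the main obstacle. I circumvent it by factoring out $A_{n-1}$ and writing the bracket as $1 + \frac{A_{n+1}}{A_{n-1}} - \frac{2R}{\rho}\frac{A_n}{A_{n-1}}$, and then controlling the two ratios by the sharp estimates already proved. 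Writing $\beta_n := \frac{R}{\rho}\frac{A_n}{A_{n-1}}$, Lemma \ref{lemm:An-1} supplies a constant $q := \sup_{n\in\N}\beta_n < 1$, while Lemma \ref{lemm:An+1} gives $\frac{A_{n+1}}{A_n} > \frac{R}{\rho}$ and hence $\frac{A_{n+1}}{A_{n-1}} > \beta_n$. Consequently the bracket exceeds $1 + \beta_n - 2\beta_n = 1 - \beta_n \ge 1 - q > 0$, so $\tilde{A}_n > (1-q)A_{n-1}$, and inserting the lower bound of \eqref{An:u-l} for $A_{n-1}$ yields $\tilde{A}_n > \frac{(1-q)e^{-\alpha(\rho+R)}}{2}(R/\rho)^n$. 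The case $n=0$ is handled separately: the $r=0$ term gives $\tilde{A}_0 \ge 2A_1 - \frac{2R}{\rho}A_0 = 2A_1\big(1 - \frac{R}{\rho}\frac{A_0}{A_1}\big)$, a fixed positive multiple of $A_1$ by Lemma \ref{lemm:An+1}, which \eqref{An:u-l} again bounds below by a constant. Taking $C_4$ to be the minimum of the constants arising for $n=0$ and $n\ge 1$, and invoking the symmetry $\tilde{A}_{-n} = \tilde{A}_n$ from Lemma \ref{lemm:A-n_symm} together with the evenness of $(R/\rho)^{|n|}$, establishes \eqref{A_til_p:l} for all $n \in \Z$.

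For \eqref{A_til:l_lN+n} I fix $0 \le n \le N$. Since every $\tilde{A}_k$ is positive, the bilateral sum is bounded below by retaining only the indices $l=0$ and $l=-1$, so $\sum_{l\in\Z}\tilde{A}_{lN+n} > \tilde{A}_n + \tilde{A}_{n-N}$. Applying \eqref{A_til_p:l} to each term and noting $|n|=n$ and $|n-N|=N-n$ on this range gives $\tilde{A}_n + \tilde{A}_{n-N} \ge C_4\big((R/\rho)^n + (R/\rho)^{N-n}\big)$. Because $0 < R/\rho < 1$, the larger of the two powers equals $(R/\rho)^{\min\{n,\,N-n\}}$, and the sum of two positive powers strictly exceeds it, which yields \eqref{A_til:l_lN+n}.

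Finally, \eqref{A_til:l_p} follows from \eqref{A_til:l_lN+n} via two elementary observations. First, the aliased sum $\sum_{l\in\Z}\tilde{A}_{lN+n}$ is invariant under $n \mapsto n+N$ (a reindexing $l \mapsto l-1$), hence depends only on $n \bmod N$; so for any $n \in \N$ I replace $n$ by its residue $n' \in \{0,\dots,N-1\}$. Second, $\min\{n',\,N-n'\} \le N/2$, so $(R/\rho)^{\min\{n',\,N-n'\}} \ge (R/\rho)^{N/2}$ since $R/\rho < 1$. Combining these with \eqref{A_til:l_lN+n} gives $\sum_{l\in\Z}\tilde{A}_{lN+n} > C_4 (R/\rho)^{N/2}$ for every $n \in \N$, as claimed. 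All constants produced along the way involve only $R$, $\rho$, $\alpha$, and $q$, and are therefore independent of $N$.
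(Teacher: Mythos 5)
Your proof is correct and follows essentially the same route as the paper: truncating the series for $\tilde{A}_n$ to its leading term, factoring out $A_{n-1}$, and controlling the ratios via Lemmas \ref{lemm:An-1} and \ref{lemm:An+1} together with \eqref{An:u-l}, then deducing \eqref{A_til:l_lN+n} from the $l=0,-1$ terms and \eqref{A_til:l_p} from periodicity and $\min\{n,N-n\}\le N/2$. The only cosmetic differences are that the paper splits the series into two positive sums and drops one wholesale (rather than keeping both $r=0$ terms and comparing $A_{n+1}/A_{n-1}$ with $\beta_n$), and that it handles $n=0$ by simply taking $\tilde{A}_0>0$ from \eqref{ineq:tild_A_n} into the minimum defining $C_4$, where you give an explicit quantitative bound.
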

\begin{proof}[Proof of Lemma \ref{lemm:A-n_lower}]
We compute $\tilde{A}_n $ downward.
Owing to \eqref{AN:symm1}, we firstly estimate $\tilde{A}_n $ for $n \in \N$.
From \eqref{tA:natural}, \eqref{an/an-1}, \eqref{an/an+1} and \eqref{An:u-l} we compute that 
\begin{align*}
	\tilde{A}_n 
	&= \sum_{r=0}^\infty (A_{  n-1 + 2r } + A_{  n+1  +2r} - \frac{2R}{\rho} A_{ n +2r } )\\
	&= \sum_{r=0}^\infty (A_{  n-1 + 2r } - \frac{R}{\rho} A_{ n +2r } )  + \sum_{r=0}^\infty (  A_{  n+1  +2r}  - \frac{R}{\rho} A_{ n +2r }  )\\
	&> A_{  n-1  } - \frac{ R } { \rho } A_{ n  } \\
	& = A_{  n-1  } \Big(  1 - \frac{R  }{  \rho  } \frac{   A_{ n  } }{  A_{  n-1  }  }  \Big) \\
	&\ge C_5 A_{  n-1  }\\
	& >  \frac{ C_5  e^{-\alpha (\rho + R)}  }{ 2 }  \Big(  \frac{R}{\rho}  \Big)^n,
\end{align*}
where 
\begin{align*}
	C_5
	&:= \inf_{ n \in \N } \Big(  1 - \frac{R  }{  \rho  } \frac{   A_{ n  } }{  A_{  n-1  }  }  \Big)\\
	&= 1 - \sup_{ n \in \N }  \frac{R  }{  \rho  } \frac{   A_{ n  } }{  A_{  n-1  }  }.
\end{align*}
From \eqref{an/an-1}  we see that $C_5 > 0$.
Due to $ \tilde A_0 >0$ from \eqref{ineq:tild_A_n}, setting a positive constant as 
\begin{equation*} 
	C_4:= \min \left\{ \frac {C_5  e^{-\alpha (\rho + R)} } 2 , \tilde A_0 \right\},
\end{equation*}
we have \eqref{A_til_p:l}.
Next, we compute $\sum_{l\in \Z} {\tilde A}_{lN+n}$ for $0\le n \le N$ as follows.
Utilizing \eqref{A_til_p:l}, we obtain that 
\begin{align*}
    \sum_{l\in \Z} {\tilde A}_{lN+n} 
    &>\sum_{l = -1}^0 {\tilde A}_{lN+n} \\
&\ge C_4 \Big(  \Big( \frac R \rho\Big)^{N - n} +   \Big( \frac R \rho\Big)^{n} \Big)\\
&> C_4 \Big( \frac R \rho\Big)^{\min\{ N - n, n\} },
\end{align*}
which implies \eqref{A_til:l_lN+n}.

Finally, since the left hand side of \eqref{A_til:l_p} is invariant for $n \equiv m \ (\text{mod} \ N)$ for $ (m \in \Z)$,
we show \eqref{A_til:l_p} in $0\le n < N$.
Due to $2\min\{ N-n, n\} \le N$ in $0\le n \le N$, \eqref{A_til:l_lN+n} yields that \eqref{A_til:l_p}.
\end{proof}

Next, we show the upper bounds.
\begin{lemma} \label{lemm:tA_up}
There exist positive constants $C_6 $ and $C_7 $ independent of $N$ such that
\begin{align}
	&{\tilde A}_n <  C_6 \Big( \frac R \rho \Big)^{|n|}, \quad (n \in  \Z ), \label{A_til:u_n^2}
\end{align}
and 
\begin{align}\label{A_til:ulN+n}
	  \sum_{l \neq 0} {\tilde A}_{lN+n}
	 &< 2 C_7 \Big( \frac R \rho\Big)^{N-n}, \quad (0 \le  n \le N).
\end{align}
\end{lemma}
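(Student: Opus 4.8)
The plan is to obtain both upper bounds directly from the two-sided estimate \eqref{An:u-l} on $A_m$, exploiting the fact that the middle term $-\frac{2R}{\rho}A_{|n|+2r}$ appearing in the definition of $\tilde A_n$ is nonpositive and may simply be discarded when one only wants an upper bound. Since $\tilde A_n=\tilde A_{-n}$, it suffices to treat $n\ge 0$ throughout, and the symmetry then extends every conclusion to all of $\Z$.

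First I would establish \eqref{A_til:u_n^2}. For $n\in\N$ I start from \eqref{tA:natural} and drop the negative contribution to get
\[
\tilde A_n<\sum_{r=0}^\infty\big(A_{n-1+2r}+A_{n+1+2r}\big).
\]
Inserting the upper estimate $A_m<\frac12(R/\rho)^{m+1}e^{\alpha R}$ from \eqref{An:u-l} and summing the two resulting geometric series in $r$, each with ratio $(R/\rho)^2<1$, yields $\tilde A_n<C_6(R/\rho)^n$ with
\[
C_6:=\frac{e^{\alpha R}}{2}\,\frac{1+(R/\rho)^2}{1-(R/\rho)^2},
\]
a constant independent of $N$. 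The case $n=0$ is handled identically, starting from $\tilde A_0=\sum_{r\ge 0}\big(2A_{1+2r}-\frac{2R}{\rho}A_{2r}\big)$ and again discarding the negative term; the same $C_6$ works because $(R/\rho)^2\le 1$.

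For \eqref{A_til:ulN+n} I would feed \eqref{A_til:u_n^2} into the sum over $l\neq 0$. Because $0\le n\le N$, one has $lN+n\ge N+n>0$ for $l\ge 1$ and $lN+n\le -N+n\le 0$ for $l\le -1$, so $|lN+n|=lN+n$ on the positive branch and $|lN+n|=|l|N-n$ on the negative branch. Splitting accordingly and applying \eqref{A_til:u_n^2},
\[
\sum_{l\ge 1}\tilde A_{lN+n}<C_6\Big(\frac R\rho\Big)^{n}\sum_{l\ge 1}\Big(\frac R\rho\Big)^{lN},\qquad
\sum_{l\le -1}\tilde A_{lN+n}<C_6\Big(\frac R\rho\Big)^{-n}\sum_{k\ge 1}\Big(\frac R\rho\Big)^{kN}.
\]
Each geometric series sums to $(R/\rho)^N/(1-(R/\rho)^N)$, and the elementary bound $1-(R/\rho)^N\ge 1-R/\rho$, valid for every $N\ge 1$, converts both into multiples of $(R/\rho)^{N-n}$: the negative branch gives $C_7(R/\rho)^{N-n}$ directly, while the positive branch gives $C_7(R/\rho)^{N+n}\le C_7(R/\rho)^{N-n}$ since $n\ge 0$ and $R/\rho<1$. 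Adding the two branches produces \eqref{A_til:ulN+n} with $C_7:=C_6/(1-R/\rho)$.

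There is no genuine analytic difficulty here; the single point that must be watched is the uniformity of the constants in $N$. This hinges precisely on the inequality $1-(R/\rho)^N\ge 1-R/\rho$, which bounds the $l$-summation independently of $N$, and on the observation that $n\ge 0$ lets the positive branch be absorbed into the target power $(R/\rho)^{N-n}$ rather than the sharper $(R/\rho)^{N+n}$. Tracking the sign of the exponent $|lN+n|$ correctly across the two branches is the only place where a computational slip is likely.
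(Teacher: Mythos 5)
Your proof is correct and follows essentially the same route as the paper's: drop the nonpositive term $-\tfrac{2R}{\rho}A_{|n|+2r}$, apply the upper bound in \eqref{An:u-l} and sum the geometric series in $r$ to get \eqref{A_til:u_n^2} (with the identical constant $C_6$), then split $\sum_{l\neq 0}$ into the two branches, bound $1-(R/\rho)^N\ge 1-R/\rho$ uniformly in $N$, and absorb the $(R/\rho)^{N+n}$ branch into $(R/\rho)^{N-n}$, arriving at the same $C_7=C_6/(1-R/\rho)$. No gaps; the argument matches the paper's proof step for step.
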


\begin{proof}[Proof of Lemma \ref{lemm:tA_up}]
From \eqref{AN:symm1}, we estimate $\tilde{A}_n $ for $n \in \N$ using  \eqref{tA:natural}  and \eqref{An:u-l}  as follows: 
\begin{align*}
\tilde{A}_n 
&= \sum_{r=0}^\infty (A_{ n-1 + 2r} + A_{ n+1 +2r} - \frac{2R}{\rho} A_{ n+2r} )\\
&< \sum_{r=0}^\infty (A_{ n-1+2r } + A_{ n+1+ 2r } )\\
& < \frac { e^{\alpha R} }{ 2 } \Big( \Big( \frac R \rho\Big)^{ n-1 } + \Big( \frac R \rho\Big)^{ n+1 } \Big) \sum_{r=0}^\infty \Big( \frac R \rho\Big)^{  1+2r }\\
& = \frac { e^{\alpha R} }{ 2 } \frac{  R / \rho  }{ 1 - (  R / \rho )^2 } \Big( \Big( \frac R \rho\Big)^{ n-1 } + \Big( \frac R \rho\Big)^{ n+1 } \Big)\\
&  = \frac { e^{\alpha R} }{ 2 }  \frac{ \rho R }{ \rho^2 - R^2 }  \Big(  \frac R \rho +  \frac \rho R  \Big)  \Big( \frac R \rho\Big)^{ n } \\
&= C_6  \Big( \frac R \rho\Big)^{ n },
\end{align*}
where $C_6 := ( e^{\alpha R } / 2  ) ( \rho^2 + R^2  ) / ( \rho^2 - R^2 ) = ( e^{\alpha R } / 2  )  ( 1 + R^2 / \rho^2 )  ( 1 - R^2 / \rho^2 )^{-1} $.
Next, we calculate $\tilde{A}_n $ upward.
Applying \eqref{An:u-l}, we compute that 
\begin{align*}
\tilde{A}_0 
&=  \sum_{r=0}^\infty (2 A_{1+2r} - \frac{2R}{\rho} A_{2r} ) \notag\\
&< 2 \sum_{r=0}^\infty  A_{1+2r}  \notag \\
&<  \sum_{r=0}^\infty \Big( \frac  R \rho \Big)^{2+2r} e^{\alpha R} \notag\\
&= e^{ \alpha R} \Big( \frac R \rho \Big)^2 \frac { 1 } { 1 - ( R / \rho )^2 }\notag \\
&= e^{ \alpha R}  \frac{ R^2 }{ \rho^2 - R^2 }\\
&= \frac{ e^{\alpha R } }{ 2  } \frac{ 2 R^2 }{ \rho^2 - R^2 } \\
& < C_6.
\end{align*}
Therefore, 
by using the symmetry \eqref{AN:symm1}, we obtain   \eqref{A_til:u_n^2}.
We compute  $\sum_{l\in \Z} {\tilde A}_{lN+n}$ upward as follows.
Since 
\begin{align*}
    \sum_{l \neq 0} {\tilde A}_{lN+n} = \sum_{l=1}^{\infty} {\tilde A}_{lN+n}  + \sum_{l=1}^{\infty} {\tilde A}_{lN-n} 
\end{align*}
for $n \in \N$, we calculate the two terms, respectively.
For the first term applying \eqref{A_til:u_n^2} for  $n \in \{ 0 \} \cup \N$, we calculate that 
\begin{align}
\sum_{l=1}^{\infty} {\tilde A}_{lN+n} 
&< C_6  \sum_{l=1}^{\infty} \Big( \frac R \rho\Big)^{ lN+n } \notag\\
&= C_6  \Big( \frac R \rho\Big)^n \frac{ (  R / \rho )^N }{ 1 - (  R / \rho )^N } \notag\\
&<C_6  \frac{ \rho }{ \rho -R } \Big( \frac R \rho\Big)^{N+n} \notag\\
&=C_7 \Big( \frac R \rho\Big)^{N+n}, \notag 
\end{align}
where $C_7 :=C_6   \rho / ( \rho -R )$.
Since $lN-n \ge 0$  for $l \in \N$ and  any $0 \le n \le N$, by using \eqref{A_til:u_n^2} we have 
\begin{align*}
\sum_{l=1}^{\infty} {\tilde A}_{lN-n} 
&< C_6  \sum_{l=1}^{\infty} \Big( \frac R \rho\Big)^{ lN-n }\\
&= C_6   \Big( \frac R \rho\Big)^{-n} \frac{  (  R / \rho )^N }{ 1 -  (  R / \rho )^N }\\
&< C_7 \Big( \frac R \rho\Big)^{N-n}.
\end{align*}
Therefore,
\begin{align*}
 \sum_{l\neq0} {\tilde A}_{lN+n} 
 & < C_7 \Big( \frac R \rho\Big)^N \Big( \Big( \frac R \rho\Big)^{n}+\Big( \frac R \rho\Big)^{-n} \Big)\\
 & \le 2 C_7 \Big( \frac R \rho\Big)^{N-n} 
\end{align*}
\end{proof}

\subsection{Upper bound of the Fourier coefficient $a_n$}
Defining the norm as
\[
\left\| g \right\|_{\infty, r_0} := \sup_{ |x|\le r_0} | g (x) |,
\]
and the  sequence as 
\begin{equation}\label{fn:defi}
 \varphi_n :=  \alpha \Big( 1 + \frac{|n|}{\alpha R} \Big) \Big( \frac{ R }{r_0} \Big)^{|n|} , \quad ( n \in \Z ),
\end{equation}
 we have following lemma.
\begin{lemma}\label{lemma:fn}
Under the assumption of Theorem \ref{thm:2},  we have 
\begin{align}\label{fn:upper}
	|a_n| &<  \ph_n  \left\| g \right\|_{\infty, r_0}  = \alpha \Big( 1 + \frac{|n|}{\alpha R} \Big)   \Big( \frac{ R }{r_0} \Big)^{|n|} \left\| g \right\|_{\infty, r_0}, (n \in \Z).
\end{align}
\end{lemma}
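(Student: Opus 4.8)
The plan is to read off the Fourier coefficient $a_n$ from the explicit series \eqref{eq:exact_g} for the \emph{extended} solution and then bound the resulting ratio of modified Bessel functions. Under the hypothesis of Theorem \ref{thm:2}, $g$ solves $\Delta g = \alpha^2 g$ on the larger disk $B(0,r_0)$ and is regular at the origin, so its $\theta$-Fourier coefficient $\hat{g}_n(r) := \frac{1}{2\pi}\int_{-\pi}^\pi g(re^{i\theta})e^{-in\theta}\,d\theta$ satisfies the modified Bessel ODE and is a multiple of $I_{|n|}(\alpha r)$; matching with \eqref{eq:exact_g} on $(0,R)$ forces
\[
\hat{g}_n(r) = \frac{a_n}{\alpha I'_{|n|}(\alpha R)}\,I_{|n|}(\alpha r), \qquad 0 \le r \le r_0,
\]
where I use $I_{-n}=I_n$. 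First I would apply the elementary Fourier-coefficient bound $|\hat{g}_n(r_0)| \le \sup_{|x|=r_0}|g(x)| \le \|g\|_{\infty,r_0}$ at radius $r_0$; since $I'_{|n|}(\alpha R)>0$ and $I_{|n|}(\alpha r_0)>0$, this rearranges to
\[
|a_n| \le \alpha\,\frac{I'_{|n|}(\alpha R)}{I_{|n|}(\alpha r_0)}\,\|g\|_{\infty,r_0}.
\]

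It then suffices to prove the purely analytic estimate $I'_{|n|}(\alpha R)/I_{|n|}(\alpha r_0) < (1 + |n|/(\alpha R))(R/r_0)^{|n|}$, which I would split into two factors (writing $m=|n|$). For the derivative, the recurrence $I'_m(x) = I_{m+1}(x) + \frac{m}{x}I_m(x)$ together with $I_{m+1}(x) < I_m(x)$ gives $I'_m(\alpha R) < (1 + \frac{m}{\alpha R})I_m(\alpha R)$; the monotonicity $I_{m+1} < I_m$ follows from \eqref{ineq:I_n} with $\nu = m+\tfrac12$, since the right-hand side $\alpha R/((m+\tfrac12)+\sqrt{(m+\tfrac12)^2+\alpha^2R^2})$ is then $<1$. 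For the ratio $I_m(\alpha R)/I_m(\alpha r_0)$, I would show that $t \mapsto I_m(t)/t^m$ is strictly increasing on $(0,\infty)$ — immediate from the power series $I_m(t)/t^m = \sum_{k\ge0}(k!\,(m+k)!\,2^{m+2k})^{-1}t^{2k}$, whose coefficients are positive — and evaluate at $t=\alpha R < \alpha r_0$ to get $I_m(\alpha R)/I_m(\alpha r_0) < (R/r_0)^m$. Multiplying the two factors and combining with the previous display yields $|a_n| < \ph_n\|g\|_{\infty,r_0}$ with $\ph_n$ as in \eqref{fn:defi}.

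Because every quantity depends on $n$ only through $|n|$ (via $I_{-n}=I_n$ and $I'_{-n}=I'_n$), the negative indices are covered automatically, so it is enough to treat $n \ge 0$. The main obstacle I anticipate is not the two Bessel inequalities, which are short, but the justification that the extended $g$ is genuinely represented on all of $B(0,r_0)$ by the series \eqref{eq:exact_g} with the \emph{same} coefficients $a_n$: this requires that the extension still solves the modified Helmholtz equation and remains regular at the origin, so that separation of variables and uniqueness of the Fourier expansion pin down $\hat{g}_n(r)$ up to $r=r_0$ rather than only up to $r=R$. Once that identification is in place, the remainder is the direct computation sketched above.
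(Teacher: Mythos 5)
Your proposal is correct and follows essentially the same route as the paper: both read off $a_n$ from the representation \eqref{eq:exact_g} evaluated at radius $r_0$ (valid because the extended $g$ still solves the equation on $B(0,r_0)$), bound the Fourier integral by $\|g\|_{\infty,r_0}$, and then split $\alpha I'_n(\alpha R)/I_n(\alpha r_0)$ into the two factors $I'_n(\alpha R)/I_n(\alpha R)$ and $I_n(\alpha R)/I_n(\alpha r_0)$, finishing with the symmetry $|a_{-n}|=|a_n|$. The only difference is the provenance of the two Bessel estimates: the paper cites Laforgia's bound $t\,I'_\nu(t)/I_\nu(t) < \sqrt{t^2+\nu^2}$ (followed by $\sqrt{\alpha^2R^2+n^2}\le \alpha R+n$) and Paris's bound $I_\nu(x)/I_\nu(y) < (x/y)^\nu$, whereas you derive both elementarily --- the derivative bound from the recurrence $I'_m = I_{m+1} + (m/x)I_m$ together with $I_{m+1}<I_m$, and the ratio bound from the power-series monotonicity of $I_m(t)/t^m$ --- which makes the argument self-contained but otherwise identical.
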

\begin{proof}[Proof of Lemma\,\ref{lemma:fn}]
We refer to \cite{Paris} for  
\begin{equation*}
	\frac{ I_\nu(x) }{ I_\nu(y) } < \Big( \frac{x}{y} \Big)^\nu, \quad \Big( 0<x<y, \ \nu > -\frac 1 2\Big),
\end{equation*}
and  to \cite{Laforgia} for 
\begin{equation*}
	t\frac{  I'_{\nu}(t) }{ I_{\nu}(t) } < \sqrt{ t^2 + \nu^2 }, \quad \Big( \nu \ge - \frac{1}{2} \Big).
\end{equation*}
\eqref{eq:exact_g} yields that 
\[
\frac{ I_n( \alpha r ) }{ \alpha I'_n( \alpha R ) } a_n = \frac{1 }{ 2 \pi  } \int_{-\pi}^\pi g(r e ^{i \theta } ) e ^{ - i n \theta } d\theta.
\]
From the assumption of Theorem \ref{thm:2} the function $g$ can be extended to the neighborhood of $\bar \Omega$,  $B(0, r_0)$ with $r_0 > R$.
Thus,
we have 
\begin{align*}
|a_n| 
& = \Big| \frac{ \alpha I'_n(\alpha R)  }{ I_n(\alpha r_0) } \frac{ 1 }{ 2\pi } \int_{-\pi}^\pi g(r_0 e^{ i \theta } ) e^{- in\theta}d\theta \Big|\\
& \le  \frac{ \alpha I'_n(\alpha R)  }{ I_n(\alpha r_0) } \left\| g \right\|_{\infty, r_0} \\
& =   \frac{ \alpha I_n(\alpha R)  }{ I_n(\alpha r_0) }  \frac{  I'_n(\alpha R)  }{ I_n(\alpha R) } \left\| g \right\|_{\infty, r_0}\\
& \le \alpha   \frac{ \sqrt{ \alpha^2 R^2 + n^2 } }{ \alpha R }    \Big( \frac{ R }{r_0} \Big)^n   \left\| g \right\|_{\infty, r_0}\\
& \le \alpha     \frac{ \alpha R + n }{ \alpha R  }    \Big( \frac{ R }{r_0} \Big)^n   \left\| g \right\|_{\infty, r_0}\\
& = \alpha     \Big( 1 + \frac{n}{\alpha R} \Big)   \Big( \frac{ R }{r_0} \Big)^n   \left\| g \right\|_{\infty, r_0}, \quad (n \in \{ 0 \} \cup \N).
\end{align*}
Due to $ |a_{-n}| =  |a_n| $ for $n \in \Z$ 
 we have 
\begin{align*}
	|a_n| 
	&\le \alpha     \Big( 1 + \frac{|n|}{\alpha R} \Big)    \Big( \frac{ R }{r_0} \Big)^{|n|} \left\| g \right\|_{\infty, r_0}, \quad  (n \in \Z).
\end{align*}
\end{proof}

\subsection{Symmetries and upper bounds of $\ph_n$}
First, we show the following lemma.
\begin{lemma}\label{lemm:sym_phy}
\begin{align}
&\ph_n = \ph_{-n}, \quad (n \in \Z),\label{phiN:symm1}\\
&\sum_{l\in\Z} \ph_{Nl+n} = \sum_{l\in\Z} \ph_{Nl-n},  \quad (n \in \Z). \notag
\end{align}
\end{lemma}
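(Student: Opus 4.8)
The plan is to prove both identities by the same elementary reasoning used for Lemma~\ref{lemm:A-n_symm}, the point being that $\ph_n$ depends on $n$ only through its absolute value $|n|$.

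First I would establish \eqref{phiN:symm1}. Reading off the definition \eqref{fn:defi}, the right-hand side is a function of $|n|$ alone: both the factor $1 + |n|/(\alpha R)$ and the power $(R/r_0)^{|n|}$ are left unchanged when $n$ is replaced by $-n$. Hence $\ph_{-n} = \ph_n$ for every $n \in \Z$, which is exactly \eqref{phiN:symm1}.

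For the second identity I would first note that the bi-infinite series $\sum_{l\in\Z}\ph_{Nl+n}$ converges absolutely. Indeed, under the assumption of Theorem~\ref{thm:2} we have $R < r_0$, so $R/r_0 < 1$, and the geometric decay $(R/r_0)^{|Nl+n|}$ dominates the merely linear growth of the factor $1 + |Nl+n|/(\alpha R)$; hence the tails as $l \to \pm\infty$ are summable. This absolute convergence legitimizes reindexing the sum. Then, exactly as in the proof of Lemma~\ref{lemm:A-n_symm}, I would substitute $l \mapsto -l$, which is a bijection of $\Z$, to obtain $\sum_{l\in\Z}\ph_{Nl+n} = \sum_{l\in\Z}\ph_{-Nl+n}$, and finally apply \eqref{phiN:symm1} with reflected index $-Nl+n = -(Nl-n)$, so that $\ph_{-Nl+n} = \ph_{Nl-n}$, yielding $\sum_{l\in\Z}\ph_{Nl+n} = \sum_{l\in\Z}\ph_{Nl-n}$.

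I do not expect a genuine obstacle: both parts are one-line observations once one notices that $\ph_n$ is even in $n$. The only point that warrants a word of care is the absolute convergence of the two-sided series, which must be recorded to justify the index reversal $l \mapsto -l$; but, as noted, this is immediate from $R/r_0 < 1$ together with the sub-exponential growth of the prefactor.
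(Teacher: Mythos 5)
Your proposal is correct and follows essentially the same route as the paper: evenness of $\ph_n$ is immediate from the definition \eqref{fn:defi}, and the series identity follows by the reindexing $l\mapsto -l$ combined with \eqref{phiN:symm1}, exactly as in the paper's proof (which mirrors Lemma~\ref{lemm:A-n_symm}). Your additional remark on absolute convergence is sound but not strictly necessary, since all terms $\ph_n$ are positive and sums of nonnegative terms are invariant under reindexing regardless of convergence.
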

\begin{proof}[Proof of Lemma\,\ref{lemm:sym_phy}]
From the same calculation as that of Lemma \ref{lemm:A-n_symm} we have
\begin{align*}
\sum_{l\in \Z} \ph_{lN+n}
=\sum_{l\in \Z} \ph_{ -lN+n}
=\sum_{l\in \Z} \ph_{ lN-n}.
\end{align*}
We changed the variable as $l=-l$ in the first equation, and used \eqref{phiN:symm1} in the second equation.
\end{proof}

Next we give the  estimation for $\ph_n$ in the following lemma.
\begin{lemma}\label{lemm:ph2}
There exist  positive constants  $C_{8 }$ and  $C_ {9 }$ independent of $N$ such that 
\begin{equation}\label{ph:u_l}
	\sum_{l \in \Z} \ph_l  \le \alpha C_{ 8 },
\end{equation}
and
\begin{equation}\label{ph:u_lN+n}
	\sum_{l\neq0}\ph_{lN+n}   < \alpha C_{9 }  \Big(  1 + \frac {2N}{\alpha R} \Big) \Big( \frac{ R }{ r_0 } \Big)^{N-n},  \quad (0 \le n \le N).
\end{equation}
\end{lemma}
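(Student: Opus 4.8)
The plan is to reduce both estimates to elementary geometric series in the ratio $q := R/r_0$, which satisfies $0<q<1$ by the assumption $r_0>R$. Writing $\ph_n = \alpha\bigl(1+\tfrac{|n|}{\alpha R}\bigr)q^{|n|}$, I will repeatedly use the two convergent sums $\sum_{l=1}^\infty x^l = x/(1-x)$ and $\sum_{l=1}^\infty l\,x^l = x/(1-x)^2$ for $0<x<1$, applied with $x=q$ or $x=q^N$.

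For \eqref{ph:u_l} there is no $N$-dependence, so the bound is a direct computation: splitting the sum over $\Z$ into $l=0$ and the two tails and using the symmetry $\ph_n=\ph_{-n}$ from Lemma \ref{lemm:sym_phy},
\[
\sum_{l\in\Z}\ph_l = \alpha\Bigl(\sum_{l\in\Z} q^{|l|} + \frac{1}{\alpha R}\sum_{l\in\Z}|l|\,q^{|l|}\Bigr) = \alpha\Bigl(\frac{1+q}{1-q} + \frac{1}{\alpha R}\frac{2q}{(1-q)^2}\Bigr),
\]
so \eqref{ph:u_l} holds (with equality) upon setting $C_8 := \frac{1+q}{1-q} + \frac{2q}{\alpha R(1-q)^2}$, which is manifestly independent of $N$.

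The estimate \eqref{ph:u_lN+n} is the substantive one. By Lemma \ref{lemm:sym_phy} and the substitution $l\mapsto -l$ in the negative tail, I split
\[
\sum_{l\neq 0}\ph_{lN+n} = \sum_{l=1}^\infty \ph_{lN+n} + \sum_{l=1}^\infty \ph_{lN-n},
\]
where, crucially, both indices $lN+n$ and $lN-n$ are nonnegative for $l\ge 1$ and $0\le n\le N$, so the absolute values in the definition of $\ph$ disappear. The key elementary inequality is that, for $l\ge 1$ and $0\le n\le N$,
\[
1 + \frac{lN\pm n}{\alpha R} \le l\Bigl(1 + \frac{2N}{\alpha R}\Bigr),
\]
which follows from $lN\pm n \le 2lN$ (using $n\le N\le lN$) together with $1\le l$. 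Inserting this, factoring out $q^{\pm n}$, and summing the resulting $\sum_{l\ge 1} l\,q^{lN} = q^N/(1-q^N)^2$ gives
\[
\sum_{l\neq0}\ph_{lN+n} \le \alpha\Bigl(1 + \frac{2N}{\alpha R}\Bigr)\frac{q^N}{(1-q^N)^2}\bigl(q^{\,n} + q^{-n}\bigr).
\]
Since $q^N q^{\,n}=q^{N+n}\le q^{N-n}$ and $q^N q^{-n}=q^{N-n}$, the last bracket collapses to at most $2q^{N-n}$; the strictness required in \eqref{ph:u_lN+n} is inherited from the $l=1$ term, where the displayed factor inequality is strict for $N\ge 1$. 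Hence \eqref{ph:u_lN+n} holds with $C_9 := 2/(1-q)^2 = 2r_0^2/(r_0-R)^2$.

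The only point requiring care — and the step I expect to be the main obstacle — is ensuring that $C_9$ is genuinely independent of $N$: the natural summation produces a factor $1/(1-q^N)^2$ that a priori varies with $N$, and the argument hinges on replacing it by the uniform bound $1/(1-q)^2$ via $q^N\le q$ for all $N\ge 1$. The polynomial factor $1+\frac{2N}{\alpha R}$ is deliberately kept explicit on the right-hand side of \eqref{ph:u_lN+n}, precisely because it cannot be absorbed into an $N$-independent constant and will later be paired against the geometric decay $q^{N-n}$ in the proof of Theorem \ref{thm:2}.
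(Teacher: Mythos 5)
Your proof is correct and follows essentially the same route as the paper's: the same splitting of $\sum_{l\neq 0}$ into two one-sided sums via the symmetry $\ph_m=\ph_{-m}$, the same geometric-series summation with ratio $(R/r_0)^N$, and the same uniformization step $(R/r_0)^N\le R/r_0$ to obtain an $N$-independent constant, arriving at the identical $C_9=2r_0^2/(r_0-R)^2$. The only cosmetic difference is that you absorb the additive constant into the factor $l$ via $1+\tfrac{lN\pm n}{\alpha R}\le l\bigl(1+\tfrac{2N}{\alpha R}\bigr)$, whereas the paper bounds the coefficient by $1+\tfrac{(l+1)N}{\alpha R}$ and tracks two series; both yield the same estimate.
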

\begin{proof}[Proof of Lemma \ref{lemm:ph2}]
\begin{align*}
\sum_{l \in \Z} \ph_l
& = \ph_0 + 2\sum_{l=1}^{\infty}  \ph_l \\
& = \alpha  
+  2 \alpha  \sum_{l=1}^{\infty}    \Big( 1 + \frac{n}{\alpha R} \Big)   \Big( \frac{ R }{r_0} \Big)^{n}  \\
&=\alpha  
+  2 \alpha  \left\{ \frac{   R / r_0  }{ 1 -   R / r_0  } +  \frac{   R / r_0  }{ \alpha R ( 1 -   R / r_0 )^2  }  \right\} \\
&= \alpha   \Big\{  \frac{r_0+R}{r_0-R} + \frac{1}{\alpha R} \frac{2 r_0 R}{(r_0-R)^2 } \Big\}\\
&\le \alpha  \Big\{ \frac{r_0^2}{(r_0-R)^2} + \frac{1}{\alpha R} \frac{2 r_0^2}{(r_0-R)^ 2 } \Big\}\\
&= \alpha    \frac{r_0^2}{(r_0-R)^2} \Big( 1 + \frac{2}{\alpha R} \Big)\\
&=\alpha C_{ 8 }. 
\end{align*}
where $ C_{8} := \alpha  ( r_0 /(  r_0 - R )  )^2 ( 1 + 2/(\alpha R) )  $.

We estimate $\ph_n$ upward as follows.
From $lN -n\ge0$ for $l \in \N$ and any $0 \le n \le N$, we calculate that
\begin{align*}
\sum_{l\neq0}\ph_{lN+n} 
& = \sum_{l=1}^\infty \ph_{lN+n} + \sum_{l=1}^\infty \ph_{lN-n}\\
& =  \alpha \left\{ \sum_{l=1}^\infty    \Big( 1 + \frac{ lN+n } { \alpha R } \Big)   \Big( \frac{ R }{r_0} \Big)^{lN+n}  + \sum_{l=1}^\infty    \Big( 1+ \frac{ lN-n } { \alpha R  } \Big)  \Big( \frac{ R }{r_0} \Big)^{lN-n}  \right\}\\
&<  2 \alpha    \sum_{l=1}^\infty \Big( 1 + \frac{ lN+N }{\alpha R } \Big) \Big( \frac{ R }{r_0} \Big)^{lN-n}  \\
&= 2 \alpha \Big( \frac{ R }{ r_0 } \Big)^{-n} \left\{ \Big( 1 + \frac{ N } { \alpha R  } \Big) \frac{ (  R / r_0  )^N }{ 1 - (  R / r_0  )^N } +  \frac{ N } { \alpha R  } \frac{ (  R / r_0  )^N }{ \Big( 1 - (  R / r_0 )^N \Big)^2}   \right\} \\
& < \frac{ 2 \alpha r_0^2   }{   (r_0 - R)^2 } \Big( 1+ \frac{ 2N } { \alpha R } \Big) \Big( \frac{ R }{ r_0 } \Big)^{N-n} \\
& = \alpha   C_{ 9 }    \Big( 1 + \frac{ 2N } {\alpha R } \Big) \Big( \frac{ R }{ r_0 } \Big)^{N-n}, 
\end{align*}
where $C_{ 9 }:= 2 r_0^2 /   (r_0 - R)^2 = 2 (1 - R/r_0)^{-2}$.
\end{proof}

\subsection{Fourier series expansion of $ \ds \frac{\partial g_N}{\partial n} $}
Next, set  the  discrete Fourier transformation for the sampling of $N$ collocation points as
\begin{align*}
\hat{s}_n:=\frac1N \sum_{l=0}^{N-1} s_l \omega^{-nl},
\end{align*}
where we recall $s_l = s(R e ^{ \theta_l})$.

\begin{lemma}\label{lemma:gN}
The Fourier series expansion of the normal derivative of the approximate solution $g_N$ on $\partial \Omega$, that is $x=Re^{i \theta}$, is expressed by
\begin{equation}
\frac{\partial g_N}{\partial n} (R e ^{ i \theta })
=  \frac{N}{R}  \sum_{n \in \Z} \frac{ {\hat s}_n }{ f(\omega^n) }  {\tilde A}_n e^{i n \theta}. \label{gn:1sigma}
\end{equation}
Moreover, 
\begin{equation}\label{hat_s_n}
	\hat{s}_n = \sum_{ l \in \Z }  a_{n+Nl}, \quad (n \in \Z).
\end{equation}

\end{lemma}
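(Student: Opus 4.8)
The plan is to evaluate the normal derivative of each shifted fundamental solution at a \emph{general} boundary point, expand it through Proposition \ref{thm:3}, and then invert the circulant system \eqref{eq:gs} by the discrete Fourier transform. First I would compute $\tfrac{\partial}{\partial n}K_0(\alpha|x-y_k|)$ at $x=Re^{i\theta}$ rather than only at the collocation points $x_j$. With $y_k=\rho\omega^k=\rho e^{2\pi i k/N}$ and outward unit normal $e^{i\theta}$, the same manipulation that produced $c_{k,j}$ applies: since $Re^{i\theta}-\rho e^{2\pi i k/N}=e^{i\theta}\bigl(R-\rho e^{i(2\pi k/N-\theta)}\bigr)$ and the cosine is even, the normal derivative equals $c(\theta-2\pi k/N)$ with $c$ the function in \eqref{eq:c}. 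Hence from \eqref{sol:mfs},
\[
\frac{\partial g_N}{\partial n}(Re^{i\theta})=\sum_{k=1}^N Q_k\, c\!\left(\theta-\frac{2\pi k}{N}\right).
\]

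Next I would substitute the Fourier expansion $c(\psi)=\tfrac1R\sum_{n\in\Z}\tilde A_n e^{in\psi}$ from \eqref{eq:c} and interchange the two summations. This interchange is legitimate because the geometric bound \eqref{A_til:u_n^2} of Lemma \ref{lemm:tA_up} makes the series for $c$ absolutely convergent. The result is
\[
\frac{\partial g_N}{\partial n}(Re^{i\theta})=\frac1R\sum_{n\in\Z}\tilde A_n\Bigl(\sum_{k=1}^N Q_k\omega^{-nk}\Bigr)e^{in\theta},
\]
so the remaining task is to identify $\sum_{k=1}^N Q_k\omega^{-nk}$. Because $c_{k,j}=c_{j-k}$, the linear system \eqref{eq:renri} is a circular convolution, which the discrete Fourier transform diagonalizes. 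Using the unnormalized transforms $\sum_j s_j\omega^{-nj}=N\hat s_n$ and $\sum_l c_l\omega^{-nl}=f(\omega^{-n})=f(\omega^{n})$ (the last equality from the evenness $\tilde A_{-n}=\tilde A_n$ in \eqref{AN:symm1} together with \eqref{eq:f_omega}), the convolution theorem gives $\sum_{k=1}^N Q_k\omega^{-nk}=N\hat s_n/f(\omega^n)$. Since $\hat s_n$ and $f(\omega^n)$ are $N$-periodic in $n$ while $\tilde A_n$ is not, substituting this back reproduces exactly \eqref{gn:1sigma}.

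For the aliasing identity \eqref{hat_s_n}, I would expand $s_j=S(\theta_j)=\sum_{m\in\Z}a_m\omega^{mj}$, insert this into $\hat s_n=\tfrac1N\sum_{j=0}^{N-1}s_j\omega^{-nj}$, and apply the orthogonality $\sum_{j=0}^{N-1}\omega^{(m-n)j}=N$ when $m\equiv n\pmod N$ and $0$ otherwise; this collapses the sum to $\hat s_n=\sum_{l\in\Z}a_{n+Nl}$. The main obstacle I anticipate is the careful bookkeeping at the two places where an $N$-periodic discrete-Fourier quantity meets a genuine integer-indexed Fourier coefficient: verifying the convolution theorem with the paper's normalizations and justifying the interchange of the $k$- and $n$-sums. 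Once the decay from Lemma \ref{lemm:tA_up} licenses that interchange, the rest is routine.
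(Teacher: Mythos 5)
Your proposal is correct and follows essentially the same route as the paper: both write $\frac{\partial g_N}{\partial n}(Re^{i\theta})=\sum_k Q_k\,c(\theta-\theta_k)$, expand $c$ via Proposition \ref{thm:3}, exploit the DFT diagonalization of the circulant system, and prove the aliasing identity \eqref{hat_s_n} by the identical orthogonality computation. The only cosmetic difference is that the paper substitutes the explicit inverse-matrix formula $Q_k=\sum_{m=0}^{N-1}\hat s_m e^{im\theta_k}/f(\omega^m)$ (obtained from Appendix \ref{append:Lag}) and then collapses $\sum_k e^{i(m-n)\theta_k}$ into Kronecker deltas, whereas you read off $\sum_k Q_k\omega^{-nk}=N\hat s_n/f(\omega^n)$ directly from the convolution theorem together with the symmetry $f(\omega^{-n})=f(\omega^n)$ --- two presentations of the same diagonalization.
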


\begin{proof}[Proof of Lemma\,\ref{lemma:gN}]
We perform the Fourier series expansion of normal derivative $g_N$ by regarding it as the function of $\partial \Omega$:
\begin{align*}
\frac{\partial g_N}{\partial n} (R e^{i \theta})
&=\sum_{k=0}^{N-1} Q_k \frac{\partial }{\partial n} K_0( \alpha | Re^{i \theta} - \rho e^{i \theta_k}|)\\
&=\sum_{k=0}^{N-1} Q_k \left( - \alpha K_1(\alpha |R-\rho e^{-i (\theta - \theta_k) }|) \frac{R - \rho \cos(\theta - \theta_k)}{|R-\rho e^{-i(\theta - \theta_k)}|}  \right)\\
&=\sum_{k=0}^{N-1} Q_k c(\theta - \theta_k).
\end{align*}
Here from \eqref{eq:qk} we have
\begin{align*}
Q_k 
&= \sum _{l=1}^N s_l b_{(-k+l)}\\
&= \frac{1}{N} \sum _{l=1}^N s_l \sum _{m=0}^{N-1} \frac{1}{f\left(\omega ^m \right) \omega ^{m (-k+l)}}\\
&=\frac{1}{N} \sum _{m=0}^{N-1} \frac{ \sum _{l=1}^N s_l e^{-i m \theta_l }} { f \left( \omega^m \right) } e^{im\theta_k -im\theta_ N}\\
&=\frac{1}{N} \sum _{m=0}^{N-1} \frac{ \sum _{l=1}^N s_l e^{-i m \theta_l }} { f \left( \omega^m \right) } e^{im\theta_k }\\
&= \sum_{m=0}^{N-1} \frac{\hat{s}_m}{f(\omega^m)} e^{i m \theta_k}.
\end{align*}
On the other hands, using \eqref{eq:c}, we compute that 
\begin{align*}
c(\theta - \theta_k)
&=\frac{1}{R}  \sum_{ n \in \Z} \tilde{A}_n e^{ i n (\theta - \theta_k)}.
\end{align*}
Substituting both, we have the Fourier series expansion of $\partial g_N / \partial n |_{x \in \partial \Omega}$ as   
\begin{align*}
\frac{\partial g_N}{\partial n} (R e^{i \theta})
&=\sum_{k=0}^{N-1} Q_k c(\theta - \theta_k)\\
&=  \frac1R \sum_{k=0}^{N-1} \sum_{m=0}^{N-1} \sum_{ n \in \Z} \frac{\hat{s}_m}{f(\omega^m)} \tilde{A}_n e^{ i n \theta + i ( m - n )\theta_k}\\
& = \frac{N}{R} \sum_{m=0}^{N-1}  \sum_{ n \in \Z, } \frac{ {\hat s}_m }{ f(\omega^m) } {\tilde A}_{ n N+m} e^{ i ( n N + m ) \theta} \\
&= \frac{N}{R} \sum_{m=0}^{N-1}  \sum_{ \substack {  n \in \Z, \\ n \equiv m \ ( \text{mod} \ N)} } \frac{ {\hat s}_m }{ f(\omega^m) } {\tilde A}_n e^{ i n \theta} \\
 &=  \frac{N}{R} \sum_{m=0}^{N-1} \sum_{ \substack {  n \in \Z, \\ n \equiv m \ ( \text{mod} \ N)} } \frac{ {\hat s}_n }{ f(\omega^n) }  {\tilde A}_n e^{i n \theta}\notag\\
 &= \frac{N}{R}  \sum_{n \in \Z} \frac{ {\hat s}_n }{ f(\omega^n) }  {\tilde A}_n e^{i n \theta}.
\end{align*}

Next, we compute that 
\begin{align*}
\hat{s}_n
&= \frac1N \sum_{l=0}^{N-1} s_l \omega^{-nl}\\
&= \frac1{ N} \sum_{l=0}^{N-1} \sum_{m\in \Z } a_m e^{im\theta_l} e^{-in\theta_l} \\
&= \frac1{ N}  \sum_{m\in \Z } a_m \sum_{l=0}^{N-1} e^{i(m-n)\theta_l} \\
&=  \sum_{m\in \Z } a_m \delta_{m-n\in N\Z} \\
&=  \sum_{ l \in \Z } a_{n+Nl}
\end{align*}
for $n \in \Z$.
\end{proof}

\subsection{Fourier series expansion of $\ds \frac{\partial h_N }{\partial n} $, and  explicit forms and upper bounds of $ L^2(\partial \Omega) $ norm}
Next we give the explicit values of the boundary integrations in the following lemma.
\begin{lemma}\label{lemm:boundary_int}
The Fourier series expansion of the normal derivative of the error $h_N$ on $\partial \Omega$, that is $x=Re^{i \theta}$, is expressed by
\begin{equation}\label{hN:series} 
	\begin{split}
	\frac{\partial h_N }{\partial n} (Re^{i \theta})
	&=  \sum_{n\in \Z} \left(  a_n - \frac N R \frac{ {\hat s}_n }{ f(\omega^n) }  {\tilde A}_n \right)e^{i n \theta} \\
	&= \sum_{n\in \Z} \left( a_n  - \frac{  \sum_{l\in \Z} a_{lN+n} }{ \sum_{l\in \Z} {\tilde A}_{lN+n} }  {\tilde A}_n \right)e^{i n \theta}.
	\end{split}
\end{equation}
The value and upper bound of $ \left\| \partial h_N  / \partial n \right\|_{L^2(\partial \Omega)}^2  $ are given by
\begin{equation*} 
	\left\| \frac{\partial h_N }{\partial n} \right\|_{L^2(\partial \Omega)}^2  
	=  2 \pi R   \sum_{n\in \Z} \frac{  \Big( a_n\sum_{l\in \Z, l\neq0} {\tilde A}_{lN+n}  -  {\tilde A}_n \sum_{l\in \Z, l\neq0} a_{lN+n}  \Big)^2 }{  \Big( \sum_{l\in \Z} {\tilde A}_{lN+n} \Big)^2 },
\end{equation*}
and
\begin{equation} \label{ineq:h_N_boun} 
\begin{split}
	 &\frac{ 1 }{ 4 \pi  R }  \left\| \frac{\partial h_N }{\partial n} \right\|_{L^2(\partial \Omega)}^2 \\
	 & \le   \left\| g \right\|_{\infty, r_0}^2  \sum_{n\in \Z} \frac{ \ph_n^2 \Big( \sum_{l\in \Z, l\neq0} {\tilde A}_{lN+n} \Big)^2  +  {\tilde A}_n^2  \Big( \sum_{l\in \Z, l\neq0} \ph_{lN+n}  \Big)^2 }{ \Big( \sum_{l\in \Z} {\tilde A}_{lN+n} \Big)^2 }, 
\end{split}
\end{equation}
respectively.
Moreover,   assuming that  the series \eqref{hN:series}  is differentiable term by term, we have 
\begin{align*}
	 &\left\|  \frac{\partial h_{N,x} }{\partial n} \right\|_{L^2(\partial \Omega)}^2 + \left\| \frac{\partial h_{N,y} }{\partial n} \right\|_{L^2(\partial \Omega)}^2 \notag\\
	&= \frac{ 4 \pi } { R }     \sum_{n\in \Z, n \neq 0} \frac{ n^2 \Big( a_n\sum_{l\in \Z, l\neq0} {\tilde A}_{lN+n}  -  {\tilde A}_n \sum_{l\in \Z, l\neq0} a_{lN+n}  \Big)^2 }{ \Big( \sum_{l\in \Z} {\tilde A}_{lN+n} \Big)^2 }, 
\end{align*}
and
\begin{equation}\label{ineq:H^1_boun} 
\begin{split}
& \frac{ R }{ 8 \pi  } \left\{  \left\| \frac{\partial h_{N,x} }{\partial n} \right\|_{L^2(\partial \Omega)}^2 +  \left\| \frac{\partial h_{N,y} }{\partial n} \right\|_{L^2(\partial \Omega)}^2 \right\} \\
&\le  \left\| g \right\|_{\infty, r_0}^2 \sum_{n\in \Z,n\neq0} \frac{ n^2 \Big\{ \ph_n^2 \Big( \sum_{l\in \Z, l\neq0} {\tilde A}_{lN+n} \Big)^2  +  {\tilde A}_n^2  \Big( \sum_{l\in \Z, l\neq0} \ph_{lN+n}  \Big)^2  \Big\} }{ \Big( \sum_{l\in \Z} {\tilde A}_{lN+n} \Big)^2 }. 
\end{split}
\end{equation}
\end{lemma}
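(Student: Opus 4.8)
The plan is to push everything through the Fourier coefficients of the relevant boundary functions and Parseval's identity on the circle $x=Re^{i\theta}$, where the arclength element is $dl=R\,d\theta$. First I would use $h_N=g-g_N$ and the fact that both solve the principal equation to write $\partial h_N/\partial n=s-\partial g_N/\partial n$ on $\partial\Omega$. Expanding $s=\sum_n a_n e^{in\theta}$ and inserting the expansion of $\partial g_N/\partial n$ from Lemma~\ref{lemma:gN} gives the first form of \eqref{hN:series} with coefficient $\beta_n:=a_n-\tfrac NR\tfrac{\hat s_n}{f(\omega^n)}\tilde A_n$. To reach the second form I would substitute $\hat s_n=\sum_l a_{lN+n}$ (Lemma~\ref{lemma:gN}) and $f(\omega^n)=\tfrac NR\sum_l\tilde A_{lN+n}$ from \eqref{eq:f_omega} (which depends on $n$ only modulo $N$), so that $\beta_n=\big(a_n\sum_l\tilde A_{lN+n}-\tilde A_n\sum_l a_{lN+n}\big)/\sum_l\tilde A_{lN+n}$; splitting off the $l=0$ terms in the numerator, the two copies of $a_n\tilde A_n$ cancel and leave the stated $\sum_{l\neq0}$ expression. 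Parseval then yields $\|\partial h_N/\partial n\|_{L^2(\partial\Omega)}^2=2\pi R\sum_n\beta_n^2$, the asserted value.

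For the two upper bounds \eqref{ineq:h_N_boun} and \eqref{ineq:H^1_boun} I would apply the elementary inequality $(p-q)^2\le 2(p^2+q^2)$ to the numerator $a_n\sum_{l\neq0}\tilde A_{lN+n}-\tilde A_n\sum_{l\neq0}a_{lN+n}$ and then estimate $|a_n|\le\varphi_n\|g\|_{\infty,r_0}$ and $\big|\sum_{l\neq0}a_{lN+n}\big|\le\|g\|_{\infty,r_0}\sum_{l\neq0}\varphi_{lN+n}$ via Lemma~\ref{lemma:fn}. The prefactors $1/(4\pi R)$ and $R/(8\pi)$ are chosen exactly so that $\tfrac1{4\pi R}\cdot 2\pi R=\tfrac12$ and $\tfrac R{8\pi}\cdot\tfrac{4\pi}R=\tfrac12$ absorb the factor $2$ from the inequality; thus \eqref{ineq:H^1_boun} follows from the value of $\|\partial h_{N,x}/\partial n\|^2+\|\partial h_{N,y}/\partial n\|^2$ in precisely the same manner as \eqref{ineq:h_N_boun} follows from the value of $\|\partial h_N/\partial n\|^2$, with an additional weight $n^2$.

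The substantive step is therefore the value of $\|\partial h_{N,x}/\partial n\|^2+\|\partial h_{N,y}/\partial n\|^2$. Since $g$ extends to $B(0,r_0)$ and $g_N$ is analytic inside $B(0,\rho)$, the error solves $\Delta h_N=\alpha^2 h_N$ in a neighborhood of $\bar\Omega$ and admits a Fourier--Bessel expansion $h_N(re^{i\theta})=\sum_n d_n I_n(\alpha r)e^{in\theta}$ convergent across $r=R$, which legitimizes differentiating the series term by term; matching boundary data gives $\beta_n=\alpha d_n I_n'(\alpha R)$. I would then resolve the Hessian of $h_N$ in the normal/tangential frame on the circle to obtain, at $r=R$, the pointwise identity
\[
\left(\frac{\partial h_{N,x}}{\partial n}\right)^2+\left(\frac{\partial h_{N,y}}{\partial n}\right)^2=\left(\frac{\partial^2 h_N}{\partial r^2}\right)^2+\left(\frac1{r^2}\frac{\partial h_N}{\partial\theta}-\frac1r\frac{\partial^2 h_N}{\partial r\partial\theta}\right)^2,
\]
where the mixed terms cancel after using $\cos^2\theta+\sin^2\theta=1$. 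Writing $R_n(r)=d_nI_n(\alpha r)$, $\gamma_n:=R_n(R)$ and eliminating $R_n''$ through the Bessel ODE $R_n''=(\alpha^2+n^2/r^2)R_n-r^{-1}R_n'$, each mode contributes $2\pi R\big[\,|R_n''(R)|^2+n^2|\gamma_n/R^2-\beta_n/R|^2\,\big]$; inserting $\gamma_n=(I_n(\alpha R)/\alpha I_n'(\alpha R))\beta_n$ and the order-$n$ behavior of this modified-Bessel ratio is what should produce the weight $n^2$ and the coefficient $4\pi/R$.

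The hard part will be exactly this last reduction. The honest per-mode contribution carries modified-Bessel ratios rather than a bare $n^2$, and collapsing it to the clean coefficient $4\pi n^2/R$ is where care is needed: it is the dominant high-frequency behavior, with the $(\partial^2 h_N/\partial r^2)^2$ term and the tangential term each contributing $n^2\beta_n^2/R^2$ once one uses $\alpha R\,I_n'(\alpha R)/I_n(\alpha R)\sim\sqrt{n^2+\alpha^2R^2}$, i.e. the same type of ratio estimates already invoked for Lemmas~\ref{lemm:An-1} and \ref{lemm:An+1}. I would also need to confirm that the differentiated Fourier--Bessel series converges uniformly on $\partial\Omega$, so that term-by-term differentiation and Parseval are valid; this rests on the geometric decay of $d_nI_n(\alpha R)$ furnished by the analytic continuation of $h_N$ to the larger disk, which is precisely where the hypotheses $\rho>R$ and the extendability of $g$ assumed in Theorem~\ref{thm:2} enter.
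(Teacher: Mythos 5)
Your first half coincides with the paper's proof: expanding $s=\sum_n a_n e^{in\theta}$, inserting the expansion of $\partial g_N/\partial n$ from Lemma \ref{lemma:gN}, substituting \eqref{hat_s_n} and \eqref{eq:f_omega} so that the $l=0$ terms cancel, applying Parseval with $dl=R\,d\theta$, and then passing to \eqref{ineq:h_N_boun} via $(p-q)^2\le 2(p^2+q^2)$ and the bounds $|a_n|\le\varphi_n\|g\|_{\infty,r_0}$ of Lemma \ref{lemma:fn} are exactly the paper's steps.

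The genuine gap is in the mixed-derivative part, and it sits precisely at the step you flag as ``the hard part'': along your route that step is not merely delicate, it is impossible. You read $\partial h_{N,x}/\partial n$ as the true normal derivative of the Cartesian partial $h_{N,x}$ and compute it from the Hessian of the Fourier--Bessel expansion $h_N=\sum_n d_nI_n(\alpha r)e^{in\theta}$. Your pointwise identity for the sum of squares is correct, but with $\beta_n:=a_n-\frac NR\frac{\hat s_n}{f(\omega^n)}\tilde A_n=\alpha d_nI_n'(\alpha R)$ the per-mode contribution it yields is
\[
2\pi R\,\beta_n^2\left[\alpha^2\Big(\frac{I_n''(\alpha R)}{I_n'(\alpha R)}\Big)^2+\frac{n^2}{R^2}\Big(1-\frac{I_n(\alpha R)}{\alpha R\,I_n'(\alpha R)}\Big)^2\right],
\]
which agrees with $\frac{4\pi}{R}\,n^2\beta_n^2$ only in the limit $n\to\infty$; as an identity for every fixed $n$ it is false, so no refinement of the Bessel-ratio estimates can recover the stated \emph{equality}. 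The paper proves the equality by a different reading of the symbol: it never expands $h_N$ in the interior at all, but differentiates the boundary trace of $\partial h_N/\partial n$ through $\theta$, writing $\partial h_{N,x}/\partial n=\big(\partial_\theta s-\partial_\theta \partial_n g_N\big)\,\partial\theta/\partial x$ with $\partial\theta/\partial x=-\sin\theta/R$ and $\partial\theta/\partial y=\cos\theta/R$ on $\partial\Omega$, then splits $\sin\theta,\cos\theta$ into $e^{\pm i\theta}$ and applies Parseval, so the weight $n^2$ appears exactly and no Bessel functions enter. (Whether that commutation of the $x$-derivative with the normal derivative is itself legitimate for the purpose of \eqref{est:H^2} is an issue internal to the paper, not one your argument addresses.) What your route can honestly deliver is an upper bound of the shape \eqref{ineq:H^1_boun} with a different constant, obtained from two-sided bounds on the ratios $I_n''/I_n'$ and $I_n/I_n'$; that would suffice for Theorem \ref{thm:2}, but it does not prove the lemma as stated.
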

The possibility of differentiation term by term to \eqref{hN:series} will be guaranteed in the proof of Theorem \ref{thm:2} below.
\begin{proof}[Proof of Lemma \ref{lemm:boundary_int}]
Applying \eqref{eq:f_omega}, \eqref{fn:upper}, \eqref{gn:1sigma} and \eqref{hat_s_n}, 
we  compute the boundary integration  as follows:
\begin{equation}\label{L2:hN}
\begin{split}
\left\| \frac{\partial h_N }{\partial n} \right\|_{L^2(\partial \Omega)}^2
&= R \int_{0}^{2\pi} \left|  s(\theta) - \frac{\partial g_N}{\partial n}(R, \theta) \right|^2 d\theta \\
&= R \int_{0}^{2\pi} \left|   \sum_{n\in \Z} a_n e^{in\theta} - \frac{N}{R}  \sum_{n \in \Z} \frac{ {\hat s}_n }{ f(\omega^n) }  {\tilde A}_n e^{i n \theta} \right|^2 d\theta\\
&=  R \int_{0}^{2\pi} \left|  \sum_{n\in \Z}  \Big( a_n  - \frac{N}{R} \frac{ {\hat s}_n }{ f(\omega^n) }  {\tilde A}_n \Big)e^{i n \theta} \right|^2 d\theta \\
&= 2 \pi R \sum_{n\in \Z} \Big( a_n  - \frac{N}{R} \frac{ {\hat s}_n }{ f(\omega^n) }  {\tilde A}_n \Big)^2 \\
&= 2 \pi R \sum_{n\in \Z} \Big( a_n  - \frac{  \sum_{l\in \Z} a_{lN+n} }{ \sum_{l\in \Z} {\tilde A}_{lN+n} }  {\tilde A}_n \Big)^2 \\
&= 2 \pi R    \sum_{n\in \Z} \frac{ \Big( a_n\sum_{l\in \Z, l\neq0} {\tilde A}_{lN+n}  -  {\tilde A}_n \sum_{l\in \Z, l\neq0} a_{lN+n}  \Big)^2 }{ \Big( \sum_{l\in \Z} {\tilde A}_{lN+n} \Big)^2 }  \\
& \le 4 \pi R    \left\| g \right\|_{\infty, r_0}^2  \sum_{n\in \Z} \frac{  \ph_n^2 \Big( \sum_{l\in \Z, l\neq0} {\tilde A}_{lN+n} \Big)^2  +  {\tilde A}_n^2  \Big( \sum_{l\in \Z, l\neq0} \ph_{lN+n}  \Big)^2 }{ \Big( \sum_{l\in \Z} {\tilde A}_{lN+n} \Big)^2 } 
\end{split}
\end{equation}
It implies \eqref{hN:series} and the assertions with respect to the boundary integration of $\partial h_N / \partial n$.

Furthermore, 
changing the variable $x=R\cos\theta$ and $ y=R\sin\theta$, 
we assume that the series of $ \partial h_N / \partial n$ in \eqref{hN:series} is differentiable term by term.
Then we compute that
\begin{align*} 
	 \frac{ \partial h_{N,x} }{\partial n} 
	 & =   \Big(     \frac{  \partial s }{\partial \theta }   -    \frac{\partial}{\partial \theta } \frac{  \partial g_N }{\partial n}  \Big) \frac{ \partial \theta }{ \partial x }  \\
	 & =  \frac{ -\sin \theta }{R} \Big( i \sum_{n\in \Z} n a_n e^{in\theta} - \frac{iN}{R}  \sum_{n \in \Z} \frac{ n {\hat s}_n }{ f(\omega^n) }  {\tilde A}_n e^{i n \theta}   \Big),\\
	 &= \frac{ - i \sin \theta }{R} \sum_{n\in \Z} n\Big( a_n -   \frac{  N }{R} \frac{  {\hat s}_n }{ f(\omega^n) }  {\tilde A}_n \Big) e^{in\theta} \\
	 &= -\frac{  1 }{2R} \sum_{n\in \Z} n\Big( a_n -   \frac{  N }{R} \frac{  {\hat s}_n }{ f(\omega^n) }  {\tilde A}_n \Big) ( e^{i (n+1) \theta} - e^{i ( n-1) \theta}),
\end{align*}
and that
	\begin{align*}	
	 \frac{ \partial h_{N,y} }{\partial n} 
	 & =   \Big(     \frac{  \partial s }{\partial \theta }   -    \frac{\partial}{\partial \theta } \frac{  \partial g_N }{\partial n}  \Big) \frac{ \partial \theta }{ \partial y }  \\
	 & =  \frac{ \cos \theta }{R} \Big( i \sum_{n\in \Z} n a_n e^{in\theta} - \frac{iN}{R}  \sum_{n \in \Z} \frac{ n {\hat s}_n }{ f(\omega^n) }  {\tilde A}_n e^{i n \theta}   \Big),\\
	 &= \frac{ i \cos \theta }{R} \sum_{n\in \Z} n\Big( a_n -   \frac{  N }{R} \frac{  {\hat s}_n }{ f(\omega^n) }  {\tilde A}_n \Big) e^{in\theta}\\
	 &= \frac{ i  }{2R} \sum_{n\in \Z} n\Big( a_n -   \frac{  N }{R} \frac{  {\hat s}_n }{ f(\omega^n) }  {\tilde A}_n \Big)  ( e^{i (n+1) \theta} + e^{i ( n-1) \theta}).
\end{align*}
Using these calculations and computing boundary integration similarly to \eqref{L2:hN} , we obtain that 
\begin{align*}
	& \left\|  \frac{\partial h_{N,x} }{\partial n} \right\|_{L^2(\partial \Omega)}^2 + \left\| \frac{\partial h_{N,y} }{\partial n} \right\|_{L^2(\partial \Omega)}^2 \\
	 &= \frac{ 4 \pi } { R }    \sum_{n\in \Z}n^2\Big( a_n -   \frac{  N }{R} \frac{  {\hat s}_n }{ f(\omega^n) }  {\tilde A}_n \Big)^2 \\
	 &= \frac{ 4 \pi } { R }    \sum_{n\in \Z, n \neq 0} \frac{ n^2 }{ \Big( \sum_{l\in \Z} {\tilde A}_{lN+n} \Big)^2 } \Big( a_n\sum_{l\in \Z, l\neq0} {\tilde A}_{lN+n}  -  {\tilde A}_n \sum_{l\in \Z, l\neq0} a_{lN+n}  \Big)^2.
\end{align*}
Therefore, we see that
\begin{align*}
& \frac{ R }{ 8 \pi  } \left\{  \left\| \frac{\partial h_{N,x} }{\partial n} \right\|_{L^2(\partial \Omega)}^2 +  \left\| \frac{\partial h_{N,y} }{\partial n} \right\|_{L^2(\partial \Omega)}^2 \right\}\\
&\le  \left\| g \right\|_{\infty, r_0}^2 \sum_{n\in \Z,n\neq0} \frac{ n^2 }{ \Big( \sum_{l\in \Z} {\tilde A}_{lN+n} \Big)^2 } \Big\{ \ph_n^2 \Big( \sum_{l\in \Z, l\neq0} {\tilde A}_{lN+n} \Big)^2  +  {\tilde A}_n^2  \Big( \sum_{l\in \Z, l\neq0} \ph_{lN+n}  \Big)^2  \Big\}.
\end{align*}
\end{proof}

\section{Convergence}\label{sec:conv}
We will explain the proof of Theorem \ref{thm:2} after showing the following Lemma.
\begin{lemma}\label{lemm:convergence}
There exist positive constants $C_{11}$ and $C_{12}$ independent of $N$ and $g$ such that
\begin{equation}\label{conv:1}
	  \left\| \frac{\partial h_N }{\partial n} \right\|_{L^2(\partial \Omega)}^2  
	  \le C_{11}  \tau_N \left\| g \right\|_{\infty, r_0}^2, 
\end{equation}
and 
\begin{align}\label{conv:2}
	  \left\| \frac{\partial h_{N,x} }{\partial n} \right\|_{L^2(\partial \Omega)}^2 +  \left\| \frac{\partial h_{N,y} }{\partial n} \right\|_{L^2(\partial \Omega)}^2
	  \le C_{12}  N^2 \tau_N \left\| g \right\|_{\infty, r_0}^2,
\end{align}
where
\begin{equation*} 
	\tau_N :=  \max\left\{ \Big(  \frac{ R}{\rho}  \Big)^N, N^2 \Big(  \frac{ R }{r_0}  \Big)^N  \right\}.
\end{equation*}
\end{lemma}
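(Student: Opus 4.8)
The plan is to deduce both \eqref{conv:1} and \eqref{conv:2} directly from the explicit upper bounds \eqref{ineq:h_N_boun} and \eqref{ineq:H^1_boun} of Lemma \ref{lemm:boundary_int}. Comparing right-hand sides, it suffices to show that the series
\[
\Sigma_N:=\sum_{n\in\Z}\frac{\ph_n^2\Big(\sum_{l\neq0}\tilde{A}_{lN+n}\Big)^2+\tilde{A}_n^2\Big(\sum_{l\neq0}\ph_{lN+n}\Big)^2}{\Big(\sum_{l\in\Z}\tilde{A}_{lN+n}\Big)^2}
\]
satisfies $\Sigma_N\le C\tau_N$, and that the companion series $\Sigma_N'$, obtained from $\Sigma_N$ by inserting the weight $n^2$ and deleting the term $n=0$, satisfies $\Sigma_N'\le CN^2\tau_N$, with $C$ independent of $N$ and $g$; then \eqref{conv:1} holds with $C_{11}=4\pi R\,C$ and \eqref{conv:2} with $C_{12}=(8\pi/R)\,C$. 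Since $\tilde A_n=\tilde A_{-n}$, $\ph_n=\ph_{-n}$ and $\sum_l\tilde A_{lN+n}=\sum_l\tilde A_{lN-n}$, $\sum_l\ph_{lN+n}=\sum_l\ph_{lN-n}$ by Lemmas \ref{lemm:A-n_symm} and \ref{lemm:sym_phy}, the summand is even in $n$, so I restrict to $n\ge0$.

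Next I would insert the estimates from Section \ref{sec:proof2}. For the denominator I use the lower bounds \eqref{A_til:l_lN+n} and \eqref{A_til:l_p}; for the numerator I use \eqref{A_til:u_n^2}, \eqref{A_til:ulN+n}, the definition \eqref{fn:defi} of $\ph_n$, and \eqref{ph:u_lN+n}. Writing each $n\ge0$ as $n=qN+m$ with $q\ge0$ and $0\le m<N$, these tailored bounds apply verbatim on the fundamental block $q=0$ (where $0\le n\le N$); on the aliasing tail $q\ge1$ I instead use the plain decay $\tilde A_n<C_6(R/\rho)^{|n|}$ and $\ph_n=\alpha(1+|n|/(\alpha R))(R/r_0)^{|n|}$ together with the trivial ratio estimate $\sum_{l\neq0}\tilde A_{lN+n}\le\sum_{l\in\Z}\tilde A_{lN+n}$. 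After substitution, every term of $\Sigma_N$ is dominated by an explicit product of powers of $R/\rho$ and $R/r_0$ times the polynomial factor $(1+n/(\alpha R))^2$.

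The remaining task is to sum the resulting geometric-type series uniformly in $N$. The denominator contributes the exponent $\min\{m,N-m\}$, which produces a crossover at $m\approx N/2$: combining it with the numerator exponents, each of the two numerator pieces collapses to a geometric sum whose largest term sits at $n\approx N/2$ and is of order $(1+n/(\alpha R))^2(R/r_0)^N\lesssim N^2(R/r_0)^N$, while the edge indices $m$ near $0$ and the tail $q\ge1$ contribute only terms of order $(R/\rho)^{2N}$ or $N^2(R/r_0)^{2N}$, both bounded by $\tau_N$ since $R/\rho,\,R/r_0<1$. Hence $\Sigma_N\le C\max\{(R/\rho)^N,\,N^2(R/r_0)^N\}=C\tau_N$, giving \eqref{conv:1}. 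For \eqref{conv:2} the extra weight $n^2$ is at most $N^2$ on the dominant block $0\le n\le N$ and merely multiplies the already higher-order contributions elsewhere, so $\Sigma_N'\le CN^2\tau_N$, which yields the claim.

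The main obstacle is the bookkeeping of this sum over $\Z$: the bounds \eqref{A_til:l_lN+n}, \eqref{A_til:ulN+n} and \eqref{ph:u_lN+n} are stated only on the fundamental range $0\le n\le N$, so one must cleanly separate the fundamental block from the aliasing tail and verify that the geometric series in both $R/\rho$ and $R/r_0$ sum with constants independent of $N$. The delicate point is the crossover at $n\approx N/2$, where the denominator lower bound degrades to $(R/\rho)^{N/2}$; tracking it is exactly what makes the two terms of $\tau_N$ emerge and forces the polynomial factor $(1+n/(\alpha R))^2\sim N^2$ to be absorbed into the stated $N^2$ prefactor (and, for \eqref{conv:2}, into $N^4$) rather than destroying the geometric decay.
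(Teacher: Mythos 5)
Your proposal is correct and follows essentially the same route as the paper: both start from the bounds \eqref{ineq:h_N_boun} and \eqref{ineq:H^1_boun} of Lemma \ref{lemm:boundary_int}, use the symmetry Lemmas \ref{lemm:A-n_symm} and \ref{lemm:sym_phy} to reduce to $n\ge 0$, split the sum into a block $0\le n\le N$ (with the crossover at $n\approx N/2$ coming from the $\min\{n,N-n\}$ exponent in \eqref{A_til:l_lN+n}) and a tail $n>N$ handled via \eqref{A_til:l_p}, \eqref{A_til:u_n^2} and \eqref{ph:u_l}, and then sum the resulting geometric series to extract $\tau_N$ (and $N^2\tau_N$ after inserting the weight $n^2\le N^2$). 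The only differences are organizational (the paper indexes the pieces $E_{1,j},E_{2,j}$ explicitly at $p=[N/2]$ and absorbs the borderline case $R^2/\rho^2=R/r_0$ via the constant $C_{10}$), plus the paper's remark that convergence of the weighted series justifies the term-by-term differentiation assumed in Lemma \ref{lemm:boundary_int}.
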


\begin{proof}[Proof of Lemma \ref{lemm:convergence}]
Since the error $h_N$ in $H^2(\Omega)$ can be bounded by the boundary integrations in \eqref{est:H^2}, we estimate the values of the boundary integrations.
From  \eqref{ineq:h_N_boun} 
we put the  bound of the boundary integration as 
\begin{align*}
\text{  Right hand side of \eqref{ineq:h_N_boun}}
&=:  \left\| g \right\|_{\infty, r_0}^2 ( E_1 +E_2 ) . \notag
\end{align*}
Furthermore, we divide $E_1$ and $E_2$  into 3 or 4 parts with respect to $n$, respectively. 
Here, we introduce an integer $p$ as the integer part of $N/2$, that is,  $ p = [  N / 2  ]$, where $[\cdot]$ is the Gauss's symbol. 
Then, from  Lemma \ref{lemm:A-n_symm}, and  Lemma \ref{lemm:sym_phy}, we write $E_1$ and $E_2$ as
\begin{align*}
E_1 
&=  \frac{ \Big( \sum_{ l\neq0} {\tilde A}_{lN} \Big)^2   \ph_0^2  }{ \Big( \sum_{ l\in \Z} {\tilde A}_{lN} \Big)^2 }    
 +2 \sum_{n =1}^p \frac{ \Big( \sum_{l\neq0} {\tilde A}_{lN+n} \Big)^2  \ph_n^2 }{ \Big( \sum_{l\in \Z} {\tilde A}_{lN+n} \Big)^2 }      
 + 2 \sum_{n = p+1}^\infty \frac{  \Big( \sum_{l\neq0} {\tilde A}_{lN+n} \Big)^2   \ph_n^2  }{ \Big( \sum_{l\in \Z} {\tilde A}_{lN+n} \Big)^2 }  \\
 &=:  E_{1,1} + E_{1,2} + E_{1,3},
\end{align*}
and
\begin{align*}
E_2 
&= \frac{  {\tilde A}_0^2  \Big( \sum_{l\neq0} \ph_{lN}  \Big)^2 }{ \Big( \sum_{l\in \Z} {\tilde A}_{lN} \Big)^2 } 
+ 2  \sum_{n =1}^p \frac{   {\tilde A}_n^2   \Big( \sum_{l\neq0} \ph_{lN+n}  \Big)^2   }{ \Big( \sum_{l\in \Z} {\tilde A}_{lN+n} \Big)^2 }\\
& \qquad + 2 \sum_{n = p+1}^N \frac{  {\tilde A}_n^2    \Big( \sum_{l\neq0} \ph_{lN+n}  \Big)^2  }{ \Big( \sum_{l\in \Z} {\tilde A}_{lN+n} \Big)^2 } 
+ 2 \sum_{n = N+1}^\infty \frac{  {\tilde A}_n^2   \Big( \sum_{l\neq0} \ph_{lN+n}  \Big)^2  }{ \Big( \sum_{l\in \Z} {\tilde A}_{lN+n} \Big)^2 }  \\
&=: E_{2,1}+  E_{2,2} + E_{2,3} + E_{2,4}.
\end{align*}

First, we estimate $E_{1,1} + E_{1,2}/2$.
For the integer $p=[N/2]$, we see that $ (N-1)/2\le p\le N/2 $ regardless of even and odd of $N$.
Due to $n \le N-n$ for $1 \le n \le p$,
applying \eqref{A_til:l_lN+n} and \eqref{A_til:ulN+n}, we compute that 
\begin{align*}
E_{1,1} + \frac{E_{1,2}}{2} 
&= \sum_{ n = 0 }^p \frac{  \Big( \sum_{l\neq0} {\tilde A}_{lN+n} \Big)^2  \ph_n^2 }{ \Big( \sum_{l\in \Z} {\tilde A}_{lN+n} \Big)^2 } \\
&<  \sum_{n = 0 }^p \frac{1}{ C_4^2 } \Big( \frac{ R }{ \rho }\Big)^{-2n}       \times 4 C_7 ^2 \Big( \frac{ R }{ \rho }\Big)^{2N-2n} \times   \alpha^2 \Big( 1+ \frac{ n }{\alpha R} \Big)^2  \Big( \frac{ R }{ r_0 }\Big)^{2n}   \\
&<  \frac {4 \alpha^2 C_7^2 }{ C_4^2 }  \Big( 1+ \frac{ p }{\alpha R} \Big)^2   \sum_{ n  = 0 }^p    \Big( \frac{ R }{ \rho }\Big)^{2N - 4n} \Big( \frac{ R  }{ r_0 }\Big)^{2n}    \\
&\le
\left\{
\begin{aligned}
&   \frac {4 \alpha^2 C_7^2 }{ C_4^2 } \frac{ Rr_0 }{  Rr_0 - \rho^2 }   \Big( 1+ \frac{ N } { 2\alpha R} \Big)^2 \Big( \frac{ R }{ \rho }\Big)^{2N}, \quad     \Big(     \frac{ R^2 }{\rho^2} >   \frac {R} {  r_0  }   \Big),   \\[2mm]
&    \frac {4 \alpha^2 C_7^2 }{ C_4^2 }   \frac{N}{2} \Big( 1+ \frac{ N }{ 2 \alpha R} \Big)^2 \Big( \frac{ R }{ \rho }\Big)^{2N} , \quad    \Big(     \frac{ R^2 }{\rho^2}  =  \frac {R} {  r_0  }   \Big),\\[2mm]
&  \frac {4 \alpha^2 C_7^2 }{ C_4^2 }  \frac{ \rho^2 }{ \rho^2 - Rr_0 }   \Big( 1+ \frac{ N } { 2 \alpha R} \Big)^2 \Big( \frac{ R }{ r_0 }\Big)^{N}, \quad     \Big(     \frac{ R^2 }{\rho^2} <   \frac {R} {  r_0  }   \Big).\\
\end{aligned}
\right.
\end{align*}
Setting 
\[
C_{10} : = \sup_{N \in \N} N^3 \Big(   \frac{R}{\rho}   \Big)^N \le \Big(   \frac{ 3 }{ \log \rho - \log R }  \Big)^3 e^{-3},
\]
we obtain that  
\[
N^3 \Big(  \frac{R}{\rho}  \Big)^{2N} \le C_{10} \Big(   \frac{R}{\rho}   \Big)^N, \quad (N\in \N).
\]
Thus, we see that $E_{1,1} + E_{1,2}/2$ converges to $0$ in the order of $\max \{  (R/\rho)^{N},  N^2(R/r_0)^N\}$.

For $E_{2,1}+E_{2,2}/2$, by using \eqref{ph:u_lN+n} we compute that 
\begin{align*}
E_{2,1} + \frac{E_{2,2}} 2
& = \sum_{n =0}^p \frac{  {\tilde A}_n^2 }{ \Big( \sum_{l\in \Z} {\tilde A}_{lN+n} \Big)^2 }    \Big( \sum_{l\neq0} \ph_{lN+n}  \Big)^2 \\
&<     \sum_{n =0}^p   \Big( \sum_{l\neq0} \ph_{lN+n}  \Big)^2\\
&<  \alpha^2  C_{ 9 }^2   \Big( 1 + \frac{ 2N } { \alpha R } \Big)^2 \Big( \frac{ R }{ r_0 } \Big)^{2N}  \sum_{n =0}^p  \Big( \frac{ R }{ r_0 } \Big)^{ -2n }\\
&=  \alpha^2     C_{ 9 }^2    \Big( 1 + \frac{ 2N } { \alpha R } \Big)^2 \Big( \frac{ R }{ r_0 } \Big)^{2N}  \frac{    1- (  r_0 / R  )^{ 2p+2 }     }{ 1 - (  r_0 / R  )^2 } \\
& < \alpha^2    C_{ 9 }^2   \frac {  1  } {  (  r_0 / R  )^2  - 1  }  \Big(1 + \frac{ 2N } { \alpha R }  \Big)^2 \Big( \frac{ R }{ r_0 } \Big)^{2N} \Big(  \Big( \frac{ r_0 }{ R } \Big)^{ N + 2 } -1 \Big)\\
&<     \alpha^2      C_{ 9 }^2      \Big(  1 - \frac R r_0  \Big)^{-1}     \Big( 1 + \frac{ 2N } { \alpha R }  \Big)^2 \Big( \frac{ R }{ r_0 } \Big)^{N}
\end{align*}
Thus,  $E_{2,1}+E_{2,2}/2$ converges to $0$ in $N^2(R/r_0)^N$ order.

Next, we estimate $E_{1,3}$.
Since ${\tilde A}_n>0$ for any $ n \in \Z$, and $ (N-1)/2\le p \le N/2 $,   we compute that 
\begin{equation}\label{est:T_1}
\begin{split}
\frac{E_{1,3}}{2}
& =\sum_{n = p+1}^\infty \frac{ \Big( \sum_{l\neq0} {\tilde A}_{lN+n} \Big)^2  \ph_n^2  }{ \Big( \sum_{l\in \Z} {\tilde A}_{lN+n} \Big)^2 }   
<  \sum_{n = p+1}^\infty  \ph_n^2 
< \Big( \sum_{n = p+1}^\infty  \ph_n \Big)^2\\
& = \alpha^2  \Big( \sum_{n = 1}^\infty  \Big( 1 + \frac{ n+p }{ \alpha R } \Big) \Big( \frac{ R }{ r_0 } \Big)^{n+p} \Big)^2\\
&= \alpha^2  \left[ \Big( \frac{ R }{ r_0 } \Big)^p \left\{  \Big( 1+ \frac{ p }{ \alpha R } \Big)\frac{   R / r_0   }{ 1 -   R / r_0   } + \frac{1 }{ \alpha R} \frac{   R / r_0   }{ ( 1 -   R / r_0  )^2 }  \right\}  \right]^2  \\
& < \alpha^2  \Big(  \frac{ R }{ r_0 } \Big)^{N-1} \Big\{ \Big( 1+\frac N {2 \alpha R}  \Big) \frac{  R/  r_0 }{ ( 1 -   R / r_0 )^2} + \frac{1}{\alpha R} \frac{   R / r_0 }{ ( 1 -   R / r_0 ) ^2}  \Big\}^2\\
& =    \frac{    \alpha^2    C_9^2    }{4}   \frac{R}{r_0}   \Big( 1+ \frac{N+2}{2\alpha R} \Big)^2   \Big( \frac{ R }{ r_0 }  \Big)^{N}
\end{split}
\end{equation}
from \eqref{fn:defi}.
Therefore，$E_{1,3}$ converges to $0$ in  $N^2( R/r_0 )^{N}$ order.

Next we compute $E_{2,3}$.
From $n= p+1, \ldots, N$ we note that $n > N-n$, and thus, $(R/\rho)^n < (R/\rho)^{N-n}$.
Utilizing \eqref{A_til:l_lN+n}, \eqref{A_til:u_n^2} and \eqref{ph:u_lN+n}, we calculate that
\begin{align*}
\frac{E_{2,3}}{2} 
&= \sum_{n = p+1}^N \frac{ {\tilde A}_n^2 }{ \Big( \sum_{l\in \Z} {\tilde A}_{lN+n} \Big)^2 } \Big( \sum_{l\neq0} \ph_{lN+n}  \Big)^2 \\
 & <  \sum_{n = p+1}^N \frac{ 1  }{ C_4^2 } \Big(  \frac{ R }{ \rho } \Big)^{ 2n-2N } \times C_6^2 \Big(  \frac{ R }{ \rho }  \Big)^{2n} \times \alpha^2 C_{9 }^2  \Big( 1 + \frac{ 2N } { \alpha R } \Big)^2 \Big( \frac{ R }{ r_0 } \Big)^{2N-2n}\\
 & =   \frac{ \alpha^2   C_6^2 C_{ 9 }^2  }{ C_4^2 }    \Big( 1 + \frac{ 2N } { \alpha R } \Big)^2    \sum_{n = 0 }^ { N - p -1   }    \Big(  \frac{ R  }{ \rho } \Big)^{2N - 4 n }   \Big( \frac{ R }{ r_0 } \Big)^{2n}  \\
 &  \le   \frac{ \alpha^2   C_6^2 C_{ 9 }^2  }{ C_4^2 }    \Big( 1 + \frac{ 2N } { \alpha R } \Big)^2     \sum_{ n = 0 }^p    \Big(  \frac{ R  }{ \rho } \Big)^{2N - 4 n }   \Big( \frac{ R }{ r_0 } \Big)^{2n}     \\
&\le
\left\{
\begin{aligned}
&    \frac{ \alpha^2  C_6^2 C_{ 9 }^2  }{ C_4^2 }    \frac{  r_0 R  }{ r_0 R - \rho^2  }  \Big( 1 + \frac{ 2N } { \alpha R } \Big)^2 \Big(  \frac{ R }{ \rho } \Big)^{2N}  , \quad     \Big(     \frac{ R^2 }{\rho^2} >   \frac {R} {  r_0  }   \Big).\\[2mm]
&      \frac{ \alpha^2  C_6^2 C_{ 9 }^2   }{ C_4^2 }      \frac{N}{2}  \Big( 1 + \frac{ 2N } { \alpha R } \Big)^2  \Big(  \frac{ R }{ \rho } \Big)^{2N} ,  \quad    \Big(     \frac{ R^2 }{\rho^2} =   \frac {R} {  r_0  }   \Big),\\
&    \frac{ \alpha^2 C_6^2 C_{ 9 }^2  }{ C_4^2 }      \frac{ \rho^2  }{ \rho^2 - r_0 R }  \Big( 1 + \frac{ 2N } { \alpha R } \Big)^2 \Big(  \frac{ R }{ r_0 } \Big)^{N}  , \quad     \Big(     \frac{ R^2 }{\rho^2}  <   \frac {R} {  r_0  }   \Big),   \\[2mm]
\end{aligned}
\right.
\end{align*}
Using $C_{10}$ similarly to the estimation of $E_{1.1}+E_{1,2}/2$,
we show that $E_{2,3}$ converges to $0$ in the order of $\max\{ (R/\rho)^{N} ,   N^2(R/r_0)^{N} \}$.

Finally, applying \eqref{A_til:l_p}, \eqref{A_til:u_n^2} and \eqref{ph:u_l}, we have 
\begin{equation}\label{est:Y_2}
\begin{split}
\frac{E_{2,4}}{2} 
&=  \sum_{n = N+1}^\infty \frac{ {\tilde A}_n^2 }{ \Big( \sum_{l\in \Z} {\tilde A}_{lN+n} \Big)^2 } \Big( \sum_{l\neq0} \ph_{lN+n}  \Big)^2 \\
&<  \sum_{n = N+1}^\infty \frac{ {\tilde A}_n^2 }{ \Big( \sum_{l\in \Z} {\tilde A}_{lN+n} \Big)^2 } \Big( \sum_{l\in \Z} \ph_{l}  \Big)^2 \\
&< \sum_{n = N+1}^\infty  \frac{ 1 }{ C_4^2 } \Big( \frac R \rho\Big)^{ -N} \times C_6^2 \Big(  \frac{ R }{ \rho }  \Big)^{2n} \times  \alpha^2 C_{8}^2   \\
& = \frac{  \alpha^2   C_6^2 C_{8}^2 }{ C_4^2 }     \Big( \frac R \rho\Big)^{ -N} \sum_{n = N+1}^\infty  \Big(  \frac{ R }{ \rho }  \Big)^{2n}\\
& = \frac{  \alpha^2   C_6^2 C_{8}^2 }{ C_4^2 }     \Big( \frac R \rho\Big)^{ -N} \frac{ (  R / \rho )^{2(N+1)} }{ 1 - (  R / \rho )^2 } \\
& = \frac{  \alpha^2   C_6^2 C_{8}^2 }{ C_4^2 }   \Big(  1  -   \frac {R^2} {\rho^2}  \Big)^{-1}    \Big( \frac R \rho\Big)^{ N +2  }.  
\end{split}
\end{equation}
We see that $E_{2,4}$ converges to $0$ in  $(R/\rho)^{N}$ order.
Summarizing above all, we obtain \eqref{conv:1}.

Next 
we will estimate $  \left\| \partial h_{N,x} / \partial n  \right\|_{  L^2( \partial \Omega)}^2 +  \left\| \partial h_{N,y} / \partial n  \right\|_{  L^2( \partial \Omega)}^2 $ in \eqref{est:H^2}.
Recalling \eqref{ineq:H^1_boun}, we put the bound as
\begin{align*}
\text{Right hand side of \eqref{ineq:H^1_boun}}
&=: \left\| g \right\|_{\infty, r_0}^2 (E'_1 +E'_2). \notag
\end{align*}
We write $E'_1$ and $E'_2$ as
\begin{align*}
E'_1 
&=  
 2 \sum_{n =1}^p \frac{ n^2 \Big( \sum_{l\neq0} {\tilde A}_{lN+n} \Big)^2  \ph_n^2 }{ \Big( \sum_{l\in \Z} {\tilde A}_{lN+n} \Big)^2 }      
 + 2 \sum_{n = p+1}^\infty \frac{ n^2 \Big( \sum_{l\neq0} {\tilde A}_{lN+n} \Big)^2   \ph_n^2  }{ \Big( \sum_{l\in \Z} {\tilde A}_{lN+n} \Big)^2 }  \\
 &=:    E_{1,2}' +E_{1,3}',
\end{align*}
and 
\begin{align*}
E'_2
&= 2  \sum_{n =1}^p \frac{  n^2 {\tilde A}_n^2   \Big( \sum_{l\neq0} \ph_{lN+n}  \Big)^2   }{ \Big( \sum_{l\in \Z} {\tilde A}_{lN+n} \Big)^2 }\\
 &\qquad + 2 \sum_{n = p+1}^N \frac{ n^2 {\tilde A}_n^2    \Big( \sum_{l\neq0} \ph_{lN+n}  \Big)^2  }{ \Big( \sum_{l\in \Z} {\tilde A}_{lN+n} \Big)^2 } 
+ 2 \sum_{n = N+1}^\infty \frac{ n^2 {\tilde A}_n^2   \Big( \sum_{l\neq0} \ph_{lN+n}  \Big)^2  }{ \Big( \sum_{l\in \Z} {\tilde A}_{lN+n} \Big)^2 }  \\
&=:   E_{2,2}' + E_{2,3}' + E_{2,4}',
\end{align*}
respectively. 
As we see that $E_{1,2}' \le (N/2)^2 E_{1,2} $, $E_{2,2}' \le (N/2)^2 E_{2,2} $, and $E_{2,3}' \le N^2 E_{2,3}$, we estimate $E_{1,3}'$ and $E_{2,4}'$.
We use the following formula 
\[
\sum _{n=1}^{\infty } n^2 r^n  =  \frac{r (r+1)}{(1-r)^3} \le \frac{ 2 r  }{(1-r)^3}, \quad (0<r<1).
\]
Similarly to \eqref{est:T_1}, we have
\begin{equation*}
\begin{split}
\frac{E_{1,3}'}{2}
& =\sum_{n = p+1}^\infty \frac{ n^2\Big( \sum_{l\neq0} {\tilde A}_{lN+n} \Big)^2  \ph_n^2  }{ \Big( \sum_{l\in \Z} {\tilde A}_{lN+n} \Big)^2 }  
<  \sum_{n = p+1}^\infty  n^2 \ph_n^2 
< \Big( \sum_{n = p+1}^\infty n \ph_n \Big)^2\\
& = \alpha^2  \Big( \sum_{n = 1}^\infty  (n+p) \Big( 1 + \frac{ n+p }{ \alpha R } \Big) \Big( \frac{ R }{ r_0 } \Big)^{n+p} \Big)^2\\
&= \alpha^2  \left[ \Big( \frac{ R }{ r_0 } \Big)^p \left\{  \Big( p+ \frac{ p^2 }{ \alpha R } \Big) \frac{   R / r_0   }{ 1 -   R / r_0   } \right. \right. \\
&\qquad \qquad \qquad \left. \left. +  \Big( 1+ \frac{ 2 p }{ \alpha R } \Big)   \frac{   R / r_0   }{ ( 1 -   R / r_0  )^2 } 
+ \frac{1 }{ \alpha R }   \frac{   R / r_0   ( 1 +   R / r_0  ) }{ ( 1 -   R / r_0  )^3 }   \right\}  \right]^2  \\
& <  \frac{ \alpha^2  C_9^3 }{  8  }   \frac{ R }{r_0}      \Big( 1+ \frac N 2 + \frac{N^2 + 4N + 8}{4\alpha R} \Big)^2   \Big( \frac{ R }{ r_0 }  \Big)^{N}.
\end{split}
\end{equation*}
Therefore, $E_{1,3}'$ converges to $0$ in  $N^4( R/r_0 )^{N}$ order.
Further, similarly to \eqref{est:Y_2}, we compute that 
\begin{align*}
\frac{E_{2,4}'}{2} 
&=  \sum_{n = N+1}^\infty \frac{ n^2 {\tilde A}_n^2 }{ \Big( \sum_{l\in \Z} {\tilde A}_{lN+n} \Big)^2 } \Big( \sum_{l\neq0} \ph_{lN+n}  \Big)^2 \\
&<  \sum_{n = N+1}^\infty \frac{  n^2 {\tilde A}_n^2 }{ \Big( \sum_{l\in \Z} {\tilde A}_{lN+n} \Big)^2 } \Big( \sum_{l\in \Z} \ph_{l}  \Big)^2 \\
&< \sum_{n = N+1}^\infty  \frac{ n^2 }{ C_4^2 } \Big( \frac R \rho\Big)^{ -N} \times C_6^2 \Big(  \frac{ R }{ \rho }  \Big)^{2n} \times  \alpha^2   C_{ 8 }^2   \\
& = \frac{  \alpha^2   C_6^2 C_{8}^2 }{ C_4^2 }     \Big( \frac R \rho\Big)^{ -N} \sum_{n = N+1}^\infty  n^2 \Big(  \frac{ R }{ \rho }  \Big)^{2n}\\
& = \frac{  \alpha^2   C_6^2 C_{8}^2 }{ C_4^2 }     \Big( \frac R \rho\Big)^{ N}   \left\{     \frac{ (  R / \rho )^ 2 ( 1 + (  R / \rho )^ 2 )  }{ ( 1 - (  R / \rho )^2 )^3 } + 2N   \frac{ (  R / \rho )^ 2  }{ ( 1 - (  R / \rho )^2 )^2 } + N^2  \frac{ (  R / \rho )^ 2 }{  1 - (  R / \rho )^2  } \right\} \\
& < \frac{  \alpha^2   C_6^2 C_{8}^2 }{ C_4^2 }   \Big(  1  -   \frac {R^2} {\rho^2}  \Big)^{-3}       (N^2 + 2N + 2)  \Big( \frac R \rho\Big)^{ N + 2 }.  
\end{align*}
We see that $E_{2,4}'$ converges to $0$ in  $N^2(R/\rho)^{N}$ order.
Consequently, the series of $ \partial h_N / \partial n$ is differentiable term by term.
Summarizing above all, we obtain \eqref{conv:2}.
\end{proof}
\begin{proof}[Proof of Theorem\,\ref{thm:2}]
Finally,  setting 
\[
a: = \frac{ R }{ \min\{ \rho, r_0 \} },
\]
and using \eqref{est:H^2}, Lemma \ref{lemm:convergence}, and  the Sobolev embedding theorem, we have the assertion of Theorem \ref{thm:2}.
\end{proof}

\section{Numerical simulation}\label{sec:simu}
We numerically investigate the error in \eqref{est:H^2} against  the number of the collocation points $N$.
We denote the error in \eqref{est:H^2} as 
\begin{align*}
	 F(N)
	 &:=C_3 \left (  \left\|  \frac{ \partial h_{N} }{\partial n} \right\|_{L^2(\partial \Omega)}^2   
	  + \left\|  \frac{ \partial h_{N,x} }{\partial n} \right\|_{L^2(\partial \Omega)}^2 
	  +  \left\|  \frac{ \partial h_{N,y} }{\partial n} \right\|_{L^2(\partial \Omega)}^2 \right) 
\end{align*}
for the simple description in this section.
From $\partial h_N / \partial n (R e^{ i \theta} )=S (\theta) - \partial g_N / \partial n (R e^{ i \theta} )$, we can compute the above error function $F(N)$.
\begin{figure*}[bt]
\begin{center}
\begin{tabular}{ccc}
\includegraphics[width=3.9cm, bb=0 0 180 126]{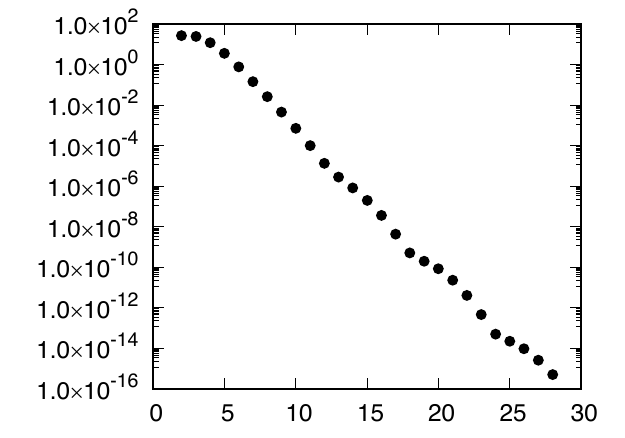} &
\includegraphics[width=3.9cm, bb=0 0 180 126]{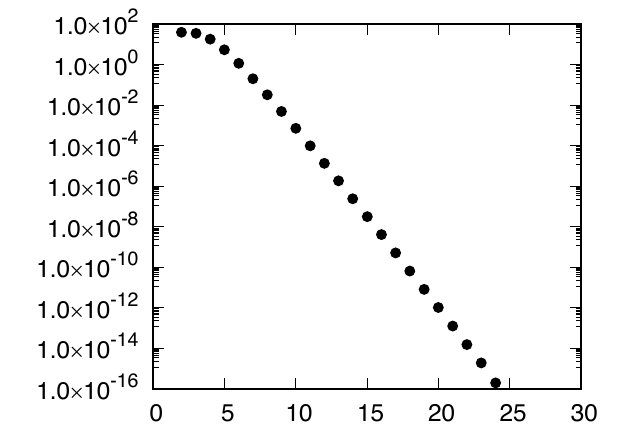} &
\includegraphics[width=3.9cm, bb=0 0 180 126]{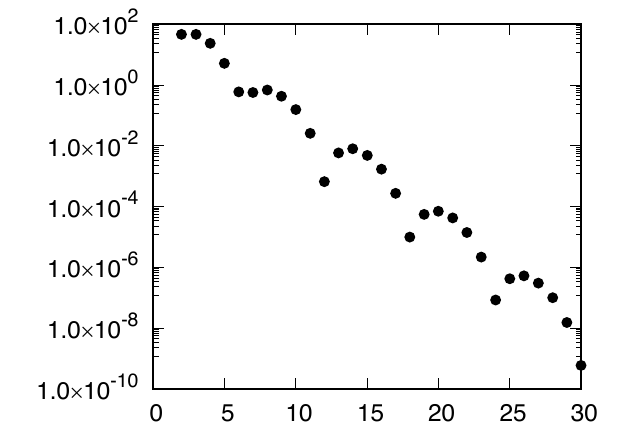} \\
(a) & (b) & (c)
\end{tabular}
\end{center}
\caption{
The numerical results of the error $F(N)=C_3  ( \| \partial h_N / \partial n  \|_{L^2(\partial \Omega)}^2 +  \| \partial h_{N,x} / \partial n  \|_{L^2(\partial \Omega)}^2 +  \| \partial h_{N,y} / \partial n  \|_{L^2(\partial \Omega)}^2 ) $ against $N$ with the parameters same as  Fig.\ \ref{fig:MFS} (b).
The vertical and horizontal axes correspond to  $ F(N)  $ and $N$, respectively.
The vertical axis is plotted on the logarithmic scale. 
(a) $\Phi(r)=e^{-\alpha r}/\sqrt{r} $ and  $P=0.2 e^{i \pi / 3}$,  (b) $\Phi(r)=e^{-r^2}$  and  $P=0.2 e^{i \pi / 3}$ and  (c) $\Phi(r)=e^{-\alpha r}/\sqrt{r} $ and  $P=0.4 e^{i \pi / 3}$.
}
\label{fig:error}
\end{figure*}
Figure \ref{fig:error} shows the relationship between the error $F(N) $ and $N$ from $2$ to $30$.
We calculate the numerical integration on $\partial \Omega$ by the trapezoidal rule.
The vertical axes of Fig. \ref{fig:error} are shown in the  logarithmic scale.
Since we observe that the error $  F(N)  $ is linearly decreasing against $N$ as in Fig. \ref{fig:error} (a), we see that the error between $g$ and $g_N$ decays exponentially in numerics.
Figure \ref{fig:error} (b) is the numerical result with the same parameters as Fig. \ref{fig:error} (a) except for  the Gauss kernel $\Phi(r)=e^{-r^2}$.
It is observed that the error $ F(N) $ is also linearly decreasing as varying the number of $N$.
Fig. \ref{fig:error}  (c) shows that the numerical result with $\Phi(r)=e^{-\alpha r}/\sqrt{r} $ and  $P=0.4 e^{i \pi / 3}$.
The error is periodically decreasing.
Moreover, it is observed that the error becomes small when $N$ is a multiple of 6.
This seems to be due to that the situation when $N$ is a multiple of  6 has the nearest collocation point on $Re^{\pi/3i}$.

\section{Discussions}\label{sec:summary}
In this paper, we have constructed the approximate solution for the Neumann boundary problem of the modified Helmholtz equation in the disk domain by the MFS.
We have shown the sufficient condition of the existence of the approximate solution by analyzing the eigenvalues of the cyclic matrix associated by the MFS.
In \cite{kats-okamo1988}, the sufficient and necessary condition for the existence and convergence of the approximate solution in the  Dirichlet problems of the Laplace equation has been reported by using the property of the fundamental solution, $\log(|x|)$.
Although our analysis provided the only sufficient, and restricted, condition of the existence of the approximate solution, it may be relaxed.
The necessary condition is left for future work.

Reducing the error into the difference of the function $s$ on $\partial \Omega$ and normal derivative of the approximate solution $g_N$ as the boundary integrations, we established the algorithm to calculate the error bound.
This enabled us to estimate the error between the exact solution $g$ and the approximate solution $g_N$ \textit{a priori}.
We have shown that the error converges to $0$ with $N^2 a^N$ order with $0<a<1$ and number of collocation points $N$.
In \cite{kats-okamo1988}, it has been shown that the error of the exact solution and approximate solution for the Laplace equation of the Dirichlet problem converges to $0$ in exponential order.
The reason for the difference of the degree of $N$ is a result of the set of the Neumann boundary condition, and thus calculating the normal derivative of the modified Bessel function of first kind.
Furthermore, this error estimation enables us to compute the error without constructing the exact solution \textit{a posteriori} in numerical simulations as in Section \ref{sec:simu}.
As demonstrated in Section \ref{sec:simu}, we have shown that the decay order of the error is exponential in the numerics against the number of collocation points $N$.
In addition, we remark that we can introduce the numerical verification for the numerics of the MFS if we compute the error as the boundary integrations with the numerical verification.

As explained in Section \ref{sec:proof2}, the Fourier coefficients, $a_n$, ${\tilde A}_n$, and ${\hat s}_n {\tilde A}_n /f(\omega^n)$ play important roles in the analysis of the error estimate.
Lemmas for $a_n$, $A_n$, and ${\tilde A}_n$ originally come from the property of the modified Bessel functions of the first and second kinds.
If the fundamental solution can be expanded to the Fourier series, the method considered in this paper may be applicable to other problems with another differential operator in the MFS in general.
In addition, as the earlier studies succeeded in extending the results obtained in the disk domain to the general domain in the case of the Laplace equation, we plan to intensify our investigation for the extension thereof in the future by using the results in this paper.

\appendix

\renewcommand{\theequation}{A.\arabic{equation}}
\setcounter{equation}{0}
\section{Exact solution}\label{append:exact}
We will show that the function $ g(r e^{ i \theta } ) $ given in \eqref{eq:exact_g} satisfies equations \eqref{eq:gene_prob}.
Changing the variables in \eqref{eq:gene_prob} as
\[
g(x,y)=g(r\cos \theta, r\sin \theta)=v(r,\theta),\ r= \sqrt{x^2+y^2}, \theta = \arctan \frac  y x,
\]
we have
\begin{align*}
\Delta g &= \frac{ \partial^2 v }{ \partial r^2 } + \frac{1}{r} \frac{\partial v }{ \partial r } + \frac{ 1 }{ r^2 } \frac{\partial^2 v }{ \partial \theta^2 }, \\
\frac{ \partial g }{ \partial n } &= \nabla g \cdot n = v_r.
\end{align*}
Thus, \eqref{eq:gene_prob} is rewritten by
\begin{equation}\label{eq:polar}
\left \{ 
\begin{aligned}
&v_{rr} +\frac{1}{r} v_r  + \frac{1}{r^2} v_{\theta\theta} -\alpha^2  v =0, \quad r\in(0, R] , \theta \in [0, 2 \pi), \\
&v_r (R, \theta) =  S(\theta) , \quad  \theta \in [0, 2 \pi).
\end{aligned}
\right. 
\end{equation}
\begin{proof}[Proof of Proposition \ref{prop:exact}]
From differentiation term by term in \eqref{eq:exact_g}, we compute that 
\begin{align*}
	g_r( r e^{ i \theta } ) = \sum_{n \in Z} a_n e^{i n \theta} = S(\theta),
\end{align*}
which implies the second equation of \eqref{eq:polar}.
For the first equation of \eqref{eq:polar}, we may prove for each $n \in \Z$.
Hence, the function $I_n(\alpha r) e^{i n \theta}$ satisfies the first equation of \eqref{eq:polar}.

Changing the variable $r=\tilde{r}/ \alpha$, 
 this reduces to the fact that the modified Bessel function $I_n( \tilde{r} )$ is a bounded solution at $\tilde{r}=0$ of the differential equation with respect to $ \xi = \xi( \tilde{r} )$:
\[
\tilde{r}^2 \xi_{\tilde{r}\tilde{r}} + \tilde{r} \xi_{\tilde{r}} - (n^2+\tilde{r}^2) \xi = 0.
\]
Thus, the function $I_n( \tilde{r} ) e^{i n \theta} $ satisfies $ \tilde{r}^2 v_{ \tilde{r} \tilde{r}  } + \tilde{r}  v_{\tilde{r} }  + v_{\theta\theta} - \tilde{r} ^2 v =0 $.
\end{proof}

\section{Lagrangian interpolation polynomial}
\label{append:Lag}
In this section we explain the way of the calculation of $G^{-1}$.
To describe the cyclic matrix by a polynomial we introduce the following  $N \times N$ matrix:
\begin{equation*}
J = 
\begin{pmatrix}
0 & 1 & \cdots & 0 \\
\vdots &\ddots&\ddots &\vdots \\
0 & & \ddots & 1\\
1 & 0 & \cdots & 0
\end{pmatrix}.
\end{equation*}
Using this matrix, we can represent the matrix $G$ as 
\begin{equation*}
	G = c_0 I + c_1 J + c_2 J^2 + \cdots +c_{N-1}J^{N-1},
\end{equation*}
where $I$ is the identity matrix.
Supposing $G^{-1} $ is also the cyclic matrix, we set it as 
\begin{equation*}
	G^{-1} = b_0 I + b_1 J + b_2 J^2 + \cdots +b_{N-1}J^{N-1}.
\end{equation*}
For these polynomials, we define the following polynomials as 
\begin{align*}
f(x) &= \sum_{j=0}^{N-1}c_j x^j, \\
f^{-1}(x)& := \sum_{j=0}^{N-1}b_j x^j .
\end{align*}
As $GG^{-1}=I$, we see that $f(x) f^{-1}(x)=1$ by putting $x=J$.
Furthermore, as $G$ and $G^{-1}$ are cyclic, and  $x^n=1$, the polynomial $f(x)$ and $ f^{-1}( x )$ with $x^n=1$  are the eigenvalues of $G$ and $G^{-1}$, respectively.
Then we see that $f(x) f^{-1}(x)-1$ is divisible by $x^n-1$.
This implies that
\[ 
f(\omega^j) f^{-1} (\omega^j)=1, \quad  (j=0, \ldots, N-1).
\]
Assuming $f(\omega^j)\neq0$, we have $f^{-1}(\omega^j)=1/f(\omega^j), \ j=0, \ldots, N-1$.
Using the Lagrangian interpolation polynomial to obtain the coefficients of $g(x)$ yields
\begin{equation*}
	f^{-1}(x) = \sum_{j=0}^{n-1} \frac{ \prod_{k \neq j, 0 \le k \le N-1} (x - \omega^k) }{ \prod_{ k \neq j } ( \omega^j - \omega^k ) } \frac{1}{ f( \omega^j )}.
\end{equation*}
We calculate the numerator and denominator, respectively.
The numerator is computed as
\begin{equation*}
\begin{split}
	\prod_{k \neq j, 0 \le k \le N-1} (x - \omega^k)
	&=\frac{ \ds{\prod_{k = 0 }^{N-1}} (x - \omega^k) }{ x- \omega^j } \\
	&=\frac{x^N-1}{x- \omega^j }\\
	&= \frac{ 1 }{ \omega^j } \frac{ 1 - (\omega^{-j} x)^N }{ 1 - \omega^{-j} x } \\
	&=\frac{ 1 }{ \omega^j } \sum_{k = 0}^{N-1} (\omega^{-j} x) ^ k.
\end{split}
\end{equation*}
The denominator is calculated as
\begin{equation*}
\begin{split}
	\prod_{ k \neq j } ( \omega^j - \omega^k )
	&=\lim_{x \to \omega^j} \frac{x^N-1}{x - \omega^j} \\
	&=\lim_{x \to \omega^j} Nx^{N-1} = N \omega^{-j}.
\end{split}
\end{equation*}
Therefore, we see that 
\begin{equation*}
	f^{-1}(x)= \frac{1}{N} \sum_{j=0}^{ N-1}  \sum_{ k = 0}^{N-1}  \frac{ (\omega^{-j} x)^k }{f( \omega^j )}
	      = \frac{1}{N} \sum_{ k = 0}^{N-1} \left( \sum_{j=0}^{ N-1}  \frac{ 1 }{\omega^{- jk }f(\omega^ j )} \right) x^k
\end{equation*}
Finally we obtain the exact form of the components of $G^{-1}$ as
\[
b_k=  \frac{1}{N} \sum_{ j = 0}^{N-1} \frac{ 1 }{\omega^{- j k }f(\omega ^ j )}, \quad (k =0, \ldots, N-1).
\]

\renewcommand{\theequation}{C.\arabic{equation}}
\setcounter{equation}{0}
\section{Proof of Lemma \ref{lemm:trace_cir} }
\label{appen:lemm_trace}
In this section, we explain the proof of the Lemma \ref{lemm:trace_cir}.
We suppose $\Omega:=B(0, R) \subset \R^2 $.
We define the functional space $ C^{m,p}( {\Omega} )$ as follows:
\[
	C^{m,p}( {\Omega} )=\{ u \in C^p(\Omega) | \ D^{\alpha}u \in L^p(\Omega), \ (|\alpha| \le m)  \},
\]
where $|\alpha|=\alpha_1+\cdots+\alpha_n$ is the multi index, and $D^{\alpha}u=\partial^{|\alpha|}u/(\partial x_1^{\alpha_1}\cdots \partial x_n^{\alpha_n})$.
As a preparation, we first show the inequality \eqref{ineq:trace_cir} for $u \in C^{1,2}( {\bar \Omega} )$, and thereafter, we prove 
the inequality \eqref{ineq:trace_cir} for $u \in H^1(\Omega)$ by using the density.
We calculate that 
\begin{equation*}
\begin{split}
	u^2(R, \theta)
	&= \frac{1}{R^2} \int_{0}^{R} \frac{\partial }{ \partial r}( r^2 u^2 (r, \theta) ) dr  \\
	&= \frac{2}{R^2} \int_{0}^{R} ( r^2 u u_r+r u^2 ) (r, \theta) dr \\
	\end{split}
\end{equation*}
Since $\nabla u \cdot (x_1,x_2) = r u_r(\sin^2 \theta + \cos^2 \theta) = r u_r$, and from the Schwarz inequality, we have  
\begin{equation*}
\begin{split}
	u^2(R, \theta)
	&= \frac{2}{R^2} \int _{0}^{R}  \left( r u \nabla u \cdot (x_1,x_2) + ru^2 \right) (r, \theta) dr \\
	&\le \frac{2}{R^2} \int _{0}^{R} r \left( R |u | | \nabla u | + u^2 \right) (r, \theta)  dr.
\end{split}
\end{equation*}
Thus we see that 
\begin{equation}\label{ine:prepa}
\begin{split}
	\left\| u \right \|_{L^2(\partial \Omega )}^2
	&=  R \int_{0}^{2 \pi} u(R,\theta)^2d\theta\\
	&\le \frac{2}{R}\int_{\Omega} (R | u | | \nabla u | + u^2 ) dx_1 dx_2\\
	&\le \left( \left\|u \right\|^2_{L^2(\Omega)} + \left\| \nabla u\right\|^2_{L^2(\Omega)}  \right) 
	       +\frac{2}{R} \left\|u \right\|^2_{L^2(\Omega)} \\
	&\le \left( 1 + \frac{2}{R} \right)\left\| u \right\|^2_{H^1(\Omega)}.
\end{split}
\end{equation}

\begin{proof}[Proof of Lemma \ref{lemm:trace_cir}]
We show the inequality \eqref{ineq:trace_cir} for $u \in H^1(\Omega)$ by using the density.
As $C^{1,2}(\Omega)$ is dense in $H^{1}(\Omega)$, for any $u \in H^1(\Omega)$, there exists a sequence $\{ u_n \}_{n\in N} \subset C^{1,2}(\Omega)$ such that $ \| u - u_n \|_{H^1(\Omega)} \to 0, \ (n \to \infty)$.
Using the inequality \eqref{ine:prepa}, we see that 
\begin{equation*}
\begin{split}
	\| u_n - u_m \|_{L^2(\partial \Omega)}
	&\le \sqrt{  1 + \frac{2}{R} } \left\| u_n - u_m \right\|_{H^1(\Omega)}\\
	&\le \sqrt{ 1 + \frac{2}{R} } \left( \left\| u_n - u \right\|_{H^1(\Omega)}+ \left\| u - u_m \right\|_{H^1(\Omega)} \right) \\
	&\to 0.
\end{split}
\end{equation*}
This implies that $\{ u_n \}_{n\in N} \subset C^{1,2}(\Omega)$ is the Cauchy sequence in $L^2(\partial \Omega)$.
Then, there exists a function $v \in L^2(\partial \Omega)$ such that $ \left\| u_n - v \right\|^2_{ L^2(\partial \Omega) } \to 0, \ (n \to \infty)$.
Thus, for any $u \in H^1(\Omega)$, a function $v$ in $L^2(\partial \Omega)$ is determined.
We define the trace of $u\in H^1(\Omega)$ in $\partial \Omega$ as 
\[
u|_{\partial \Omega}:=v.
\]
Next we show that the determination of $u|_{\partial \Omega}$ does not depend on the choice of the sequences in $C^{1,2}(\Omega)$.
We consider other sequence in $C^{1,2}(\Omega)$ that converges to $u \in H^1(\Omega)$.
Suppose that for any $u \in H^1(\Omega)$, there exists another sequence $\{ v_n\}$ in $C^{1,2}(\Omega)$ such that $ \| u - v_n \|_{H^1(\Omega)} \to 0, \ (n \to \infty)$.
Then we compute that 
\begin{equation*}
\begin{split}
	\| v - v_n \|_{L^2(\partial \Omega)}
	&\le \| v - u_n \|_{L^2(\partial \Omega)} + \| u_n - v_n \|_{L^2(\partial \Omega)}\\
	&\le \| v - u_n \|_{L^2(\partial \Omega)} + \sqrt{ 1 + \frac{2}{R} } \left( \left\| u_n - u \right\|_{H^1(\Omega)}+ \left\| u - v_n \right\|_{H^1(\Omega)} \right) \\
	&\to 0, \quad (n\to 0).
\end{split}
\end{equation*}
Then $u|_{\partial \Omega}$ is well defined.
Finally, we show the inequality  \eqref{ineq:trace_cir} as 
\begin{equation*}
\begin{split}
	\| u\|_{L^2(\partial \Omega)} = \| v\|_{ L^2(\partial \Omega) }
	&= \lim_{n\to \infty} \| u_n\|_{L^2(\partial \Omega)}\\
	&\le  \lim_{n\to \infty}  \sqrt{ 1 + \frac{2}{R} }\left\| u_n \right\|^2_{H^1(\Omega)}\\
	&= \sqrt{ 1 + \frac{2}{R} } \left\| u \right\|^2_{H^1(\Omega)}.
\end{split}
\end{equation*}
\end{proof}

\section*{Acknowledgments}
The authors would like to thank Professor Mitsuhiro Nakao of Waseda University for the fruitful suggestions for the error estimation in Section \ref{sec:NV}.
This work was supported in part by JST CREST Grant Number  JPMJCR14D3 to S.-I. E.,  JSPS KAKENHI Grant Number  15H03613 to H. O., and JSPS KAKENHI Grant Numbers  17K14228 and 20K14364 to Y. T..

\bibliographystyle{siamplain}
\bibliography{references}

\end{document}